\providecommand{\U}[1]{\protect\rule{.1in}{.1in}}
\let\orgdescriptionlabel\descriptionlabel
\renewcommand*{\descriptionlabel}[1]{%
	\let\orglabel\label
	\let\label\@gobble
	\phantomsection
	\edef\@currentlabel{#1}%
	\let\label\orglabel
	\orgdescriptionlabel{#1}%
}
\theoremstyle{plain}
\newtheorem{theorem}{Theorem}[section]
\newtheorem{lem}[theorem]{Lemma}
\newtheorem{cor}[theorem]{Corollary}
\newtheorem{Exmp}[theorem]{Example}
\newtheorem{Rem}[theorem]{Remark}
\theoremstyle{definition}
\newtheorem{Defn}[theorem]{Definition}
\newtheorem*{definition*}{Definition} 
\newcommand{\RR}{\mathbb{R}}
\newcommand{\NN}{\mathbb{N}}
\newcommand{\ol}[1]{\overline{#1}}
\newcommand{\Id}{\mathop\mathrm{Id}\nolimits}
\renewcommand{\emptyset}{\varnothing}
\DeclareMathAlphabet{\mathpzc}{OT1}{pzc}{m}{it}
\newcommand{\va}{\varphi}
\newcommand{\vf}{\mathcal{V}}
\newcommand{\dd}{{\tt D}}
\newcommand{\dt}[1]{{\tt d}{#1}}
\newcommand{\fs}[1]{\mathsf {#1}}
\newcommand{\eu}[1]{\EuScript {#1}}
\newcommand{\mt}{\mathbbm {d}}
\newcommand{\set}[1]{\left\{#1\right\}}
\newcommand\Set[2]{\left\{#1\mid#2\right\}} 
\newcommand{\snorm}[2][]{\left\lVert#2\right\rVert_{#1}}
\newcommand{\Sem}[1]  {\textsf{Sem}(#1)}
\newcommand{\zero}[1]{\boldsymbol{0}_{#1}}
\newcommand{\li}{Lipschitz}
\newcommand{\rr}{\mathbb{R}}
\newcommand{\nn}{\mathbb{N}}
\newcommand{\mc}[1]{\mathsf{MC}^{#1}}
\newcommand{\bl}[1] {\boldsymbol {#1}}
\DeclareMathOperator{\codim}{codim}
\DeclareMathOperator{\Aut}{Aut}
\DeclareMathAlphabet\EuScript{U}{eus}{m}{n}
\SetMathAlphabet\EuScript{bold}{U}{eus}{b}{n}
\newcommand\opn{\ensuremath{\mathrel{\mathpalette\opncls\circ}}}
\newcommand{\opncls}[2]{
	\ooalign{$#1\subseteq$\cr
		\hidewidth\raisefix{#1}\hbox{$#1{\stylefix{#1}#2}\mkern2mu$}\cr}}
\def\raisefix#1{
	\ifx#1\displaystyle
	\raise.39ex
	\else
	\ifx#1\textstyle
	\raise.39ex
	\else
	\ifx#1\scriptstyle
	\raise.275ex
	\else
	\raise.150ex
	\fi
	\fi
	\fi
}
\def\stylefix#1{
	\ifx#1\displaystyle
	\scriptstyle
	\else
	\ifx#1\textstyle
	\scriptstyle
	\else
	\ifx#1\scriptstyle
	\scriptscriptstyle
	\else
	\scriptscriptstyle
	\fi
	\fi
	\fi
}
\DeclareFontFamily{U}{mathx}{\hyphenchar\font45}
\DeclareFontShape{U}{mathx}{m}{n}{
	<5> <6> <7> <8> <9> <10>
	<10.95> <12> <14.4> <17.28> <20.74> <24.88>
	mathx10
}{}
\newcommand{\fr}{Fr\'{e}chet }
\newcommand{\TM}[1]{\mathrm{T}\mathsf{#1}}       
\newcommand{\TS}[1]{\mathrm{T}_{#1}}   
\newcommand{\TTM}[1]{\mathrm{T}(\mathrm{T}\mathsf{#1})} 
\newcommand{\TpM}[2]{\mathrm{T}_{#1}\mathsf{#2}}  
\DeclareMathAlphabet{\mathsfit}{OT1}{cmss}{m}{sl}
\newcommand{\s}{\eu{S}}
\DeclareMathOperator{\supp}{supp}
\newcommand{\Adm}[2]{{A}_{#1, #2}}
\newcommand{\GeoFlow}[1]{\Phi_{#1}}
\newcommand{\subjclass}[1]{\textbf{AMS Subject Classifications (2020):} #1\par}
\newcommand{\keywords}[1]{\textbf{Keywords:} #1\par}
\title{Spray-Invariant Sets in Infinite-Dimensional Manifolds}
\author{Kaveh Eftekharinasab}
\date{}
\begin{document}
	
	\maketitle
	
	\begin{abstract}
		We introduce the concept of spray-invariant sets on infinite-dimensional manifolds, where any geodesic of a spray starting in the set stays within it for its entire domain. These sets, possibly including singular spaces such as stratified spaces, exhibit different geometric properties depending on their regularity:
		sets that are not differentiable submanifolds  may show sensitive dependence, for example, on parametrization, whereas for differentiable submanifolds invariance is preserved under reparametrization.
		
		This framework offers a broader perspective on geodesic preservation than the rigid notion of totally geodesic submanifolds, with examples arising naturally even in simple settings, such as linear spaces equipped with flat sprays.
	\end{abstract}
	\let\thefootnote\relax\footnotetext{
		\subjclass{58B20, 53C22,  	37B35}
		\,\,\,\,\,\keywords{ Sprays, geodesics, spray-invariant sets, infinite-dimensional manifolds, adjacent and second-order adjacent cone, singular spaces.}
		This work was supported by grants from the Simons Foundation (1030291, 1290607, K.A.E)}

	\section*{Introduction}\label{sec1}

This work studies subsets of infinite-dimensional manifolds, including singular spaces such as stratified spaces, where any geodesic of a spray starting in the subset remains within it for the entire duration of its definition. The behavior of such sets, which we call \emph{spray-invariant}, depends strongly on their regularity. For instance, for sets that are not differentiable submanifolds, reparametrization of geodesics may affect whether they remain within the set. In contrast, for differentiable submanifolds, this invariance is preserved. The motivation for studying spray-invariant sets with less regularity stems from the observation that such sets can arise naturally even in simple settings like linear spaces equipped with flat sprays.

We focus on the intrinsic properties of sprays and work within the broader context of spray geometry. This approach does not require the existence of a spray induced by a Finsler  (or Riemannian) metric or compatibility with such a structure.  Consequently, we can analyze the dynamics
of geodesics in the setting of infinite-dimensional manifolds, where traditional Finsler (or Riemannian) geometric tools are either unavailable or inapplicable.
We primarily focus on the more general context of \fr manifolds; however, our results are  applicable to Hilbert and Banach manifolds as well. 

Given a subset \( S \) of a manifold \( \fs{M} \) and a spray \( \s \) on \( \fs{M} \), we define the \textit{admissible set} \( A_{\s, S} \) (Definition \ref{def:admiset}) as the collection of all  tangent vectors \( v \in  \mathrm{T}\fs{M} \) such that  the projection \( \tau(v) \in S \), and \( \s(v) \) belongs to the second-order adjacent cone of \( S \) at \( \tau(v) \). In Theorem \ref{th:1}, we prove that if \(S\) is closed, then a geodesic \( g(t) \) lies entirely in \( S \) if and only if its tangent vector \( g'(t) \) belongs to \( A_{\s, S} \) for all \( t \) in its domain. This equivalence establishes \( A_{\s, S} \) as a fundamental invariant for analyzing the behavior of geodesics. Building on this, we define a spray-invariant set as follows: a subset \( S  \) is spray-invariant for the spray \( \s \) if, for every geodesic \( g\colon I \to \fs{M} \) of \( \s \) with initial tangent \( g'(0) \in A_{\s, S} \), the entire trajectory remains within \( S \), i.e., \( g(t) \in S \) for all \( t \in I \), where \( I \) is the maximal interval of existence.  Example \ref{ex:1}  provides an  instance where the spray-invariant sets is a singular space. In Example \ref{ex:stra}, we present an instance of stratified spray-invariant set.

For a sufficiently differentiable submanifold \( S \), the admissible set \( A_{\s, S}  \) provides a  characterization of totally geodesic submanifolds. Specifically, in Theorem  \ref{th:sub}, we prove that \( A_{\s, S}  =\mathrm{T}S\) if and only if \(S\) is a totally geodesic submanifold. We  apply this theorem in Example~\ref{ex:hi} to the infinite-dimensional manifold of loops on a sphere.
Theorem  \ref{th:sub} yields a geometric criterion for identifying totally geodesic structures: that is, if \( S \) is closed and {locally geodesically convex} (i.e., every pair of sufficiently close points in \( S \) is connected by a unique geodesic segment lying entirely in \( S \)), then \( S \) is totally geodesic (Corollary \ref{cor:1}). Using this criterion,  Example \ref{ex:crit} presents a totally geodesic submanifold. In contrast, Example~\ref{ex:2} provides a differentiable submanifold that is spray-invariant but not totally geodesic.

In Subsection \ref{subsec:1}, we introduce the notion of spray automorphisms and establish, in Theorem \ref{th:aut}, that the image of a spray-invariant set under such an automorphism remains spray-invariant.  Example \ref{ex:9} illustrates this with the flat spray on \( C^\infty(\mathbb{R}, \mathbb{R}) \) and a singular spray-invariant set.
In Subsection \ref{sub:orbit},  we study Lie group actions on smooth  manifolds and their orbit type decompositions. We show that if the action admits suitable local slices, then each orbit type stratum is invariant under a group-invariant spray
(Theorem \ref{th:decom}).  

If \( S \) is a spray-invariant set, a natural question arises: does the spray \( \s \), when regarded as a first-order vector field on \( \TM{M} \), remain second-order adjacent tangent to \( A_{\s, S}  \)? This reformulation reduces the problem from analyzing second-order dynamics on \( \fs{M} \) to studying first-order dynamics on \( \TM{M} \), which may be more tractable.  
This question can be addressed using the Nagumo-Brezis Theorem, which provides a criterion for determining the invariance of sets under vector fields.
However, the theorem’s classical formulation applies primarily to Banach manifolds and does not generalize straightforwardly to arbitrary \fr manifolds. For a detailed discussion of these limitations and potential adaptations, see \cite{kr}.

In Section \ref{sec:mc}, we revisit the category of {\(\mc{k}\)-Fr\'{e}chet manifolds}, where the Nagumo-Brezis Theorem holds under nuclearity assumptions. For a nuclear \( \mc{k}\)-Fr\'{e}chet manifold \( \fs{M} \) and a closed subset \( S \subset \fs{M} \), we prove (Theorem \ref{th:imp}) that
\( S \) is  spray-invariant  for the spray \( \s \) if and only if 
\( \s \), regarded  as a first-order vector field on \( \TM{M} \), is  second-order adjacent tangent to \( A_{\s, S}  \).   

A key property of this class  of manifolds is the validity of the transversality theorem.
Using this, we give a transversality-based criterion to characterize spray-invariant sets (Theorem \ref{th:tf}).

In Section \ref{sec:hb}, we consider Banach and Hilbert manifolds. All results from Sections \ref{sec:2} and \ref{sec:mc}  remain valid with appropriate modifications to their assumptions.  
\section{Sprays}
We employ the notion of differentiable mappings, known as \( C^k \)-mappings in the Michal–Bastiani sense or Keller's \( C_c^k \)-mappings. 

Throughout this paper, we assume that \( (\fs{F}, \Sem{\fs{F}}) \) and \( (\fs{E}, \Sem{\fs{E}}) \) are \fr spaces over \( \rr \), where \( \Sem{\fs{F}} = \Set{\snorm[\fs{F},n]{\cdot}}{n \in \nn} \) and \( \Sem{\fs{E}} = \Set{\snorm[\fs{E},n]{\cdot}}{n \in \nn} \) are families of continuous seminorms that define the topologies of \( \fs{F} \) and \( \fs{E} \), respectively. 
We use the notation \( U \opn \mathsf{T} \) to denote that \( U \) is an open subset of the topological space \( \mathsf{T} \).
\begin{Defn}[Definition I.2.1, \cite{neeb}]\label{def:diff}
	Let  $ \va\colon U \opn \fs{E}  \to  \fs{F}$ be a mapping. Then the  derivative
	of $\va$ at $x$ in the direction $h$ is defined by 
	\[
	\dd \va_x(h)=\dd\va(x)(h) \coloneqq
	\lim_{t \to 0} {1\over t}(\va(x+th) -\va(x))
	\]
	whenever it exists. 
	The function $\va$ is called differentiable at
	$x$ if $\dd \va(x)(h)$ exists for all $h \in \fs{E}$. 
	It is called 
	continuously differentiable if it is differentiable at all
	points of $U$, and the mapping
	\[
	\dd \va \colon U \times \fs{E} \to \fs{F}, \quad (x,h) \mapsto \dd \va(x)(h)
	\]
	is continuous. 
	It is called a $C^k$-mapping, $k \in \nn \cup \{\infty\}$, 
	if it is continuous, the iterated directional derivatives 
	\(
	\dd^{j}\va_{x}(h_1,\ldots, h_j)=\dd^{j}\va(x)(h_1,\ldots, h_j)
	\)
	exist for all integers $j \leq k$, $x \in U$, and $h_1,\ldots, h_j \in \fs{E}$, 
	and all mappings $\dd^j \va \colon U \times \fs{E}^j \to F$ are continuous. Alternatively, we  refer to $C^{\infty}$-mappings as being smooth.
\end{Defn}
In light of the  chain rule for $C^k$-mappings between open subsets of \fr spaces 
(see \cite[Proposition I.2.3]{neeb}), we can naturally define $C^k$-manifolds modeled on \fr spaces.  We assume that these \fr manifolds are Hausdorff.

Henceforth, we assume that \( \fs{M} \) is a \( C^k \)-\fr manifold modeled on \( \fs{F} \), \(k \geq 4\).
Recall that the tangent space 
\( \TpM{p}{M}\) at a point \( p \in \fs{M} \) is defined as the space of equivalence classes of tangent curves at \( p \) (see \cite[I.3.3]{neeb}).
The tangent bundle  \( \tau\colon \mathrm{T}\fs{M} \to \fs{M} \) is a \( C^{k-1} \)-\fr manifold modeled on \( \fs{F} \times \fs{F} \). 
Given a chart \( (U, \varphi) \) on \( \fs{M} \) with \( \varphi\colon U \to \fs{F} \), the induced chart on \( \TM{M} \) is \( \big(\mathrm{T}U,\mathrm{T}\va \big) \), where \(\mathrm{T}U = \tau^{-1}(U) \) and
\[
\mathrm{T}\va\colon\mathrm{T}U \to \varphi(U) \times \fs{F}, \quad\mathrm{T}\va(p, v) = (\varphi(p), \dd \varphi_p(v)),
\]
for \( p \in U \text{ and } v \in \TpM{p}{M}\).
We will  require the tangent bundle over \(\TM{M}\), commonly called the double tangent bundle, denoted by \(\tau_{2} \colon\TTM{M} \to \TM{M}\). This can result in expressions of considerable complexity. In such cases, we sometimes  use the notation \(\va_* \) to denote the tangent map $\mathrm{T}\va$.
Consider a chart \((U, \varphi)\) on \( \fs{M} \). Then, the tangent map of \({\varphi}_*\) is given by
\begin{gather*}
	\mathrm{T}({\varphi}_*)\colon\mathrm{T}(TU) \to (\varphi(U) \times \fs{F}) \times (\fs{F} \times \fs{F}),\\
	\mathrm{T}({\varphi}_*) \big((p, v), (u, {w})\big) = \left((\varphi(p), \dd\varphi_p(v)), (\dd\varphi_p(u), (\dd^2\varphi_p(v,u)+\dd \va_p({w})))\right), 
\end{gather*}
for\( p \in U \), \(v, u \in\TpM{p}{M}\) and \( {w} \in\mathrm{T}_{{v}}(\mathrm{T}_p\fs{M}) \).

We identify \( U \times \fs{F} \) with \(\mathrm{T}U \) and correspondingly \(\mathrm{T}\va \) with \( \dd \va \). 
Thus, for brevity, we may write \(\mathrm{T}\va \) or \( \va_* \), implicitly understanding this identification.

Consider two overlapping charts \((U, \varphi)\) and \((V, \psi)\) on \(\fs{M}\) with \(U \cap V \neq \emptyset\). 
For \(\TM{M}\), the transition map \(\phi =\psi \circ \varphi^{-1}\) induces the following 
transformation equation:
\begin{equation}\label{eq:1}
	\phi_*(p, v) = \left(\phi(p), \dd \phi_p(v)\right), \quad
	\forall (p, v) \in \va (U \cap V) \times \fs{F}.
\end{equation}
By differentiating \eqref{eq:1}, we derive the following change of coordinates rule for \(\TTM{M}\):
\begin{equation}\label{eq:2}
	\mathrm{T}(\phi_*)\left((p, v), (x,y)\right) = 
	\big(\dd \phi_p(x),
	\dd^2 \phi_p(x,v)+\dd \phi_p(y)\big),
\end{equation}
for all \( (p, v) \in \va (U \cap V) \times \fs{F}\) and all $x,y \in \fs{F} $.

To simplify notations, let $ ( U,\va) $ be a chart on $ \fs{M} $, $p\in U$, $ v \in\TpM{p}{M} $, and $ w \in \TS{v}(\TM{M}) $. We define
\begin{equation}\label{eq:n1}
	v_{\va} \coloneqq \dd \varphi_p(v),  \text{and } w_{\va_*} \coloneqq \dd {(\va_*)}_v (w) =(w_{\va_*,1}, w_{\va_*,2}).
\end{equation}
Here, $w_{\va_*,1}$
and $w_{\va_*,2}$ are the components of $w_{\va_*}$, obtained by applying Equation 
\eqref{eq:1} to the tangent vectors.
Consequently, from Equation 
\eqref{eq:2}  for $ p \in V $, we obtain
\begin{equation}\label{eq:n2}
	w_{\psi_*,2} = \dd^2 \phi_{\va(x)}(v_p, w_{\va_*,1} )+\dd \phi_{\va(x)}( w_{\va_*,2}). 
\end{equation}
The theory of sprays, studied in the context of Banach manifolds by Lang~\cite{lang}, was later generalized to \fr manifolds in \cite{k1, k2} with the aim of investigating the properties of geodesics on these manifolds.

We now recall the definition of sprays and related concepts that will be required.

A $ C^{r}$-mapping $V \colon \TM{M} \to \TTM{M}$, \(1 \leq r \leq k-2\),  satisfying
\(
\tau_{*} \circ V =\Id_{\TM{M}}
\)
is called
a second-order $ C^{r} $-vector field. If, in addition, \(
\tau_2 \circ V =\Id_{\TM{M}}
\), then \(V\) is called symmetric. 
A second-order vector field  is symmetric if and only if its
integral curves are canonical lifts of curves in \(\fs{M}\). 

We will later use the following lemma, which was proved using different arguments for finite-dimensional manifolds in  \cite[Corollary 5.1.6]{spf}.
\begin{lem}\label{lem:4}
	Let \( V\colon \TM{M} \to \mathrm{T}(\TM{M}) \) be a \(C^r\)-symmetric second-order vector field, and let \( \phi \) be a \(C^{r+2}\)-automorphism of $\fs{M}$. Then, \( \phi_{**} \circ V \circ \phi_{*}^{-1} \) is also a \(C^r\)-symmetric second-order vector field.
\end{lem}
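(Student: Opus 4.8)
The plan is to verify the two defining conditions of a symmetric second-order vector field for the conjugated map $W \coloneqq \phi_{**} \circ V \circ \phi_*^{-1}$, namely $\tau_* \circ W = \Id_{\TM{M}}$ and $\tau_2 \circ W = \Id_{\TM{M}}$, together with the regularity claim. The regularity is the routine part: since $\phi$ is a $C^{r+2}$-automorphism, its tangent map $\phi_*$ is a $C^{r+1}$-automorphism of $\TM{M}$, and its second tangent map $\phi_{**}$ is a $C^{r}$-automorphism of $\TTM{M}$; composing these with the $C^r$-map $V$ and the $C^{r+1}$-map $\phi_*^{-1}$ yields a $C^r$-map $W\colon \TM{M} \to \TTM{M}$, using the chain rule for $C^k$-mappings between \fr manifolds.

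The heart of the argument is the compatibility of the conjugation with the two projections, which I would establish by functoriality of the tangent functor. First I would record the naturality identities $\tau \circ \phi_* = \phi \circ \tau$ and, applying $\mathrm{T}$ to this, $\tau_* \circ \phi_{**} = \phi_* \circ \tau_*$ on $\TTM{M}$. For the first condition, compute
\[
\tau_* \circ W = \tau_* \circ \phi_{**} \circ V \circ \phi_*^{-1}
= \phi_* \circ \tau_* \circ V \circ \phi_*^{-1}
= \phi_* \circ \Id_{\TM{M}} \circ \phi_*^{-1} = \Id_{\TM{M}},
\]
where the middle equality is the naturality identity and the penultimate one uses that $V$ is second-order. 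For the symmetry condition, the key observation is that the two canonical projections $\tau_2, \tau_* \colon \TTM{M} \to \TM{M}$ are intertwined by $\phi_{**}$ in the same way: since $\tau_2$ is itself a tangent-bundle projection (that of $\TM{M}$), naturality of $\mathrm{T}$ applied to $\phi_*$ gives $\tau_2 \circ \phi_{**} = \phi_* \circ \tau_2$. Then the identical computation as above, with $\tau_2$ in place of $\tau_*$ and using symmetry $\tau_2 \circ V = \Id_{\TM{M}}$, yields $\tau_2 \circ W = \Id_{\TM{M}}$.

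The main obstacle I anticipate is justifying the symmetry naturality identity $\tau_2 \circ \phi_{**} = \phi_* \circ \tau_2$ cleanly, because in an infinite-dimensional \fr setting one must be careful that the two bundle structures on $\TTM{M}$ behave as expected and that $\phi_{**}$ genuinely respects the \emph{second} projection $\tau_2$, not merely $\tau_*$. I would resolve this by working in the induced double-tangent charts and invoking the explicit change-of-coordinates formula \eqref{eq:2}: in local coordinates $\phi_{**}$ acts on a point $((p,v),(x,y))$ by the rule governed by $\mathrm{T}(\phi_*)$, and one reads off directly that the $\tau_2$-component transforms by $\dd\phi_p$ applied to the appropriate slot, matching $\phi_* \circ \tau_2$. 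Since both naturality identities are purely formal consequences of the tangent functor applied to the single map $\phi_*$, they hold in the \fr category exactly as in finite dimensions, which is precisely what upgrades the finite-dimensional argument of \cite[Corollary 5.1.6]{spf} to the present generality.
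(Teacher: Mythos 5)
Your proof is correct, but it takes a genuinely different route from the paper's. The paper argues entirely in induced charts: using the explicit coordinate formulas for $\phi_*$ and $\phi_{**}$, it computes
$\phi_{**} \circ V \circ \phi_*^{-1}(x,y) = (x,y,y,Z(x,y))$
with an explicit $C^r$ principal part $Z$ (involving $\dd^2\phi$), and then reads off both projection identities from this normal form. You instead derive the two intertwining identities $\tau_* \circ \phi_{**} = \phi_* \circ \tau_*$ and $\tau_2 \circ \phi_{**} = \phi_* \circ \tau_2$ from the tangent functor, after which both the second-order and the symmetry conditions follow from the same one-line conjugation computation. Your argument is shorter, avoids all second-derivative bookkeeping, and makes transparent that the statement is categorical --- valid whenever the tangent functor and the chain rule are available, which they are for Michal--Bastiani $C^k$-maps by \cite[Proposition I.2.3]{neeb}; the paper's computation, in exchange, produces the explicit local expression for the conjugated field, which is a concrete formula one can inspect directly. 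One remark on your anticipated obstacle: the identity you single out as delicate, $\tau_2 \circ \phi_{**} = \phi_* \circ \tau_2$, is actually the more elementary of the two, since it is just the statement that the tangent map $\mathrm{T}(\phi_*)$ covers $\phi_*$, which holds by the very construction of tangent maps; it is $\tau_* \circ \phi_{**} = \phi_* \circ \tau_*$ that genuinely requires functoriality (apply $\mathrm{T}$ to $\tau \circ \phi_* = \phi \circ \tau$ and use the chain rule). Your fallback chart verification via \eqref{eq:2} is nonetheless correct, and it is exactly the computation the paper carries out globally.
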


\begin{proof}
	Let \( (x, y) \in \TM{M} \) and \( (x, y, X, Y) \in \mathrm{T}(\TM{M}) \). Then, 
	\[
	(x, y) \overset{\phi_*}{\longmapsto} \big(\phi(x), \dd\phi(x)(y)\big), \text{ and}
	\]
	\[
	(x, y, X, Y) \overset{\phi_{**}}{\longmapsto} \big(\phi(x), \dd\phi(x)(y), \dd\phi(x)(X), \dd^2\phi(x)(y, X) + \dd\phi(x)(Y)\big).
	\]
	By applying \( V \) to \( 	\phi_{*}^{-1}(x, y) = (\phi^{-1}(x), \dd\phi^{-1}(x)  (y)) \), we obtain
	\[
	V(\phi_{*}^{-1}(x, y)) = \big(\phi^{-1}(x), \dd\phi^{-1}(x)  (y), \dd\phi^{-1}(x)  (y), Y(\phi^{-1}(x), \dd\phi^{-1}(x)  (y))\big).
	\]
	Here, \( Y(\phi^{-1}(x), \dd\phi^{-1}(x)(y)) \) is a tangent vector to \( \fs{M} \) at \( \phi^{-1}(x) \).
	
	Next, applying \( \phi_{**} \) to \( V(\phi_{*}^{-1}(x, y)) \) yields
	\begin{align*}
		\phi_{**}(V(\phi_{*}^{-1}(x, y))) &= \phi_{**}\big(\phi^{-1}(x), \dd\phi^{-1}(x)(y), \dd\phi^{-1}(x)(y), Y(\phi^{-1}(x), \dd\phi^{-1}(x)(y))\big) \\
		&= \Big(\phi(\phi^{-1}(x)), \dd\phi(\phi^{-1}(x))(\dd\phi^{-1}(x)(y)), \dd\phi(\phi^{-1}(x))(\dd\phi^{-1}(x)(y)), \\
		& \dd^2\phi(\phi^{-1}(x))\big(\dd\phi^{-1}(x)(y), \dd\phi^{-1}(x)(y)\big) + \dd\phi(\phi^{-1}(x))\big(Y(\phi^{-1}(x), \dd\phi^{-1}(x)(y))\big)\Big) \\
		&= \big(x, y, y, Z(x, y) \big),
	\end{align*}
	where  
	\[
	Z(x, y) = \dd^2\phi(\phi^{-1}(x))\big(\dd\phi^{-1}(x)(y), \dd\phi^{-1}(x)(y)\big) 
	+ \dd\phi(\phi^{-1}(x))\big(Y(\phi^{-1}(x), \dd\phi^{-1}(x)(y))\big).
	\]
	is a \(C^r\)-function. 
	The projections \( \tau_* \)  and \(\tau_2\) act as follows
	\[
	\begin{aligned}
		\tau_*(\phi_{**} \circ V \circ \phi_{*}^{-1}(x, y)) &= \tau_*(x, y, y, Z(x, y)) = (x, y), \\
		\tau_2(\phi_{**} \circ V \circ \phi_{*}^{-1}(x, y)) &= \tau_2(x, y, y, Z(x, y)) = (x, y).
	\end{aligned}
	\]
	Thus, \( \phi_{**} \circ V \circ \phi_{*}^{-1} \) is a $C^r$-symmetric second-order vector field.
\end{proof}
Assume that  $ s $ is a fixed real number, and define the mapping
\begin{equation*}
	L_{\TM{M}}\colon \TM{M} \to \TM{M},\quad v \mapsto sv.
\end{equation*}
Then,  the induced map $ (L_{\TM{M}})_*\colon \TTM{M} \to \TTM{M} $ satisfies
\begin{equation*}
	(L_{\TM{M}})_* \circ L_{\TTM{M}}= L_{\TTM{M}} \circ (L_{\TM{M}})_*,
\end{equation*}
which follows from the linearity of $ L_{\TM{M}} $ on each fiber.
A second-order symmetric $ C^{r} $-vector filed $\s \colon \TM{M} \to \TTM{M}$  is called a {spray} if 
it satisfies the following condition:
\begin{enumerate}[label=$ {\bf (SP\arabic*)} $,ref=SP\arabic*]
	\itemindent=10pt
	\item \label{eq:sp1} $\s(sv) = (L_{\TM{M}})_*(s\s(v))$ for
	all $ s \in \rr $ and $ v \in \TM{M} $.
\end{enumerate}

A manifold that possess a $ C^k $-partition of unity admits a spray of class \(C^{k-2}\).  Important examples are Lindel\"{o}f manifolds modelled on nuclear \fr spaces, cf.
\cite[Theorem 16.10]{km}. 
Since we require that  sprays be of class at least \(C^2\), the underlying manifolds must be of class at least \(C^4\). Therefore, we assume henceforth that \(\fs{M}\) is at least of class  \(C^4\).

Let \( \gamma\colon I \subseteq \rr \rightarrow \fs{M} \) be a \( C^r \)-curve, \( r \geq 2 \). A lift of \( \gamma \) to \( \TM{M} \) is a curve \( \widetilde{\gamma}\colon I \rightarrow \TM{M} \) such that \( \tau \circ \widetilde{\gamma} = \gamma \). In other words, a lift of a curve is a curve in the tangent bundle that projects down to the original curve on the base manifold. The curve \( \gamma' = \dd \gamma\colon I \rightarrow \TM{M} \) is called the canonical lift of \( \gamma \).
An integral curve \( \eta \) of a spray \( \s \) is a curve in \( \TM{M} \) such that \( \eta'(t) = \s(\eta(t)) \).
Each integral curve \( \eta \) of \( \s \) is the canonical lift of \( \tau(\eta) \), i.e., \( (\tau(\eta))' = \eta \). For any \( t \) in the domain of \( \eta \), the latter formula reads as \( (\tau(\eta))'(t) = \eta(t) \).

A curve \( g\colon I \subseteq \rr  \rightarrow \fs{M} \) is called a geodesic of a spray \( \s \) if its canonical lifting \( g'\colon I \rightarrow \TM{M} \) is an integral curve of the spray \( \s \). Since \( g' \) lies above \( g \) in \( T \fs{M} \), that is, \( \tau(g') = g \), we can express the geodesic condition by
\begin{equation}\label{eq:gd}
	g'' = \s(g').
\end{equation}

To avoid ambiguity, when necessary, we will denote the local representations of objects in a chart $ (U,\va ) $ of $ \fs{M} $ by a subscript $ \va $.
The  local representations of $ L_{\mathrm{T}U} $ and $ (L_{\mathrm{T}U})_*$ in   $ ( U,\va) $  are given by 
\begin{equation*}
	L_{\mathrm{T}U}:(x,v) \mapsto (x,sv) \quad \text{and}\quad (L_{\mathrm{T}U})_*\colon(x,v,u,w)\mapsto (x,sv,u,sw).
\end{equation*}
Therefore, we get
\(
	L_{\mathrm{T}(\mathrm{T}U)} \circ (L_{\mathrm{T}U})_*(x,v,u,w)=(x,sv,su,s^2w).
\)
Let $ {{\s}}_{\va}= ({{\s}}_{\va,1},{{\s}}_{\va,2})\colon (U \times \fs{F} ) \to \fs{F} \times \fs{F}$
be a local representation of $ \s $, where each  $ {{\s}}_{\va,i} $ maps $ U \times \fs{F} $
to $ \fs{F} $ with $ {{\s}}_{\va,1}(x,v)=v $. Then,  for
all $ s \in \rr $, the following condition holds:
\begin{equation}\label{eq:123}
	{{\s}}_{\va,2}(x,sv)=s^2{{\s}}_{\va,2}(x,v).
\end{equation}
Thus,  condition  \eqref{eq:sp1} not only characterizes a second-order vector field but also implies that $\s_{\va,2}$ is homogeneous of degree 2 in  $v$. Consequently, $\s_{\va,2}$ is a quadratic map in its second variable, i.e.,
\begin{equation*}
	{{\s}}_{\va,2} (x,v) = \dfrac{1}{2}\dd_2^2 {{\s}}_{\va,2} (x, \zero{\fs{F}})(v,v)
\end{equation*}
where $ \dd_2^2 $ is the second partial derivative with respect to the second variable.
In the chart, a geodesic \(g\) of \(\s\) has two components:
\(
{g}(t)=\big( x(t),v(t)\big) \in U \times\fs{F}.
\)
Accordingly,  Equation \eqref{eq:gd} takes the  form
\begin{equation}\label{eq:gd2}
	\dfrac{\dt{x}}{\dt{t}} = v(t),\quad \dfrac{ \mathtt{d}^2{x}}{\dt{t^2}} = 	{{\s}}_{\va,2} (x,v(t)) = \dfrac{1}{2}\dd_2^2 {{\s}}_{\va,2} (x, \zero{\fs{F}})(v(t),v(t)).
\end{equation}
\begin{Defn}
	\label{def:proj_equiv}
	Two sprays $S$ and $\bar{S}$ on a manifold $\fs{M}$ are said to be {projectively equivalent} if they share the same geodesics as point sets. 
Specifically,  for any geodesic \(\ol{g}\) of \(\ol{\s}\),
there exists an orientation-preserving reparametrization \(\ol{t}=\ol{t}(t)\) such that the curve \(g(t) \coloneqq \ol{g}(\ol{t}(t))\) is a geodesic of \(\s\), and vice versa.
\end{Defn}
Suppose  \(\s\)  is projectively equivalent to \(\ol{\s}\). For any \(v \in \mathrm{T}_x\fs{M}\), let \(g(t) \) be a  geodesic  of \(\s\) with \(g(0)=x \) and \(g'(0)=v \). Then, there exists a reparametrization  \(\ol{t}=\ol{t}(t)\) with \(\ol{t}(0)=0\) and \((\ol{t})'(0)=1\), such that  \(  \ol{g}(\ol{t}) \coloneqq g(t)\) is the geodesic of \(\ol{\s}\) satisfying \(\ol{g}(0)=x \) and \((\ol{g}){'}(0)=v \).

By definition, the second derivative of the coordinate representation of the geodesic at \(t=0\) is 
\( g_{\va}''(0)= \frac{ \mathtt{d}^2{x}}{\dt{t^2}}|_{t=0}\). Therefore, 
Equation \eqref{eq:gd2} implies
\begin{equation}\label{eq:gd3}
	{{\s}}_{\va,2} (x,v_{\va}) = g_{\va}''(0)= (\ol{g}_{\va})\mkern1mu''(0)+ (\ol{t})''(0)(\ol{g}_{\va})'(0)
	= {\ol{\s}}_{\va,2} (x,v_{\va}(t)) + (\ol{t})''(0)v_{\va}.
\end{equation}
Here, the final term \((\ol{t})''(0)v_{\va}\)  is a scalar multiplication, where the real number
\((\ol{t})''(0)\)  multiplies the vector \(v_{\va}\), which is the local representation of the tangent vector in \(\fs{F}\).
Letting 
\(
P(x,v_{\va}) \coloneqq (\ol{t})''(0)
\), we observe that \(P\)
depends only on \(x,v_{\va}\).  Furthermore, $P$ satisfies the homogeneity
\[
P(x, rv_{\va}) = r P(x, v_{\va}), \quad \forall r \in \rr.
\]
which follows from the quadratic homogeneity of sprays. Thus, 
\begin{equation}\label{eq:gd4}
	{{\s}}_{\va,2} (x,v_{\va}) 
	= {\ol{\s}}_{\va,2} (x,v_{\va}) + P(x,v_{\va})v_{\va}.
\end{equation}

Conversely, suppose that \(\s\) and \(\ol{\s}\) satisfy Equation \eqref{eq:gd4} with $P$ homogeneous of degree 1 in $v$. Given a geodesic \(g(t)\) of \(\s\), the reparametrization  \(\ol{t}(t)\) can be constructed by solving  \(\ol{t}''(t)=P(g(t),g'(t)))\) with \(\ol{t}(0)=0\) and \((\ol{t})'(0)=1\), implying
\(  \ol{g}(\ol{t}) = g(t)\) is a geodesic of \(\ol{\s}\).

Sprays that are projectively equivalent form equivalence classes, which we call projective sprays. For a spray $\s$, its corresponding equivalence class is denoted by $[\s]$.

\begin{Rem}
	Vector fields on general \fr manifolds may lack integral curves, and even when they exist, uniqueness is not guaranteed. Consequently, a geodesic flow may fail to exist or be well-defined. However, our study remains unaffected by these limitations, as our primary focus is the dynamics of geodesics, independent of their existence or uniqueness.
\end{Rem}

\section{Spray-Invariant sets}	\label{sec:2}
Sets invariant under the flow of vector fields have been extensively studied and well-documented for Banach manifolds in \cite{mon1}. Partial generalizations to \fr manifolds were subsequently established in \cite{kr}. In this section, drawing inspiration from the concept of flow-invariant sets, we introduce the notion of spray-invariant sets with respect to a spray on \fr manifolds.

As our aim is to define spray-invariant sets that are not necessarily submanifolds, we require the notions of tangent and second-order tangent cones. However, the concept of a tangent cone to a subset of a topological vector space can be formulated in various ways.
We adopt the adjacent cone (also known as the intermediate cone) as defined in \cite[Definition 4.1.5]{aub}. 

In \fr spaces, convergence occurs if and only if it occurs with respect to each  seminorm. Therefore,  a sequence converges to a set if and only if all pseudo-distances between the sequence and the set simultaneously approach zero. The pseudo-distance of an element $x \in \fs{F}$ to a subset $S \subset \fs{F}$ with respect to a seminorm $\snorm[\fs{F},n]{\cdot}$ is defined by
\begin{equation*}
	\mathrm{d}_{\fs{F},n} (x,S) \coloneqq \inf \Set{\snorm[\fs{F},n]{x-y}}{y \in S}. 
\end{equation*}	
\begin{Defn}\label{def:2}
	Let \( \emptyset \neq S \subset \fs{F}\) and $ s \in {S} $. The  adjacent cone
	$ \mathrm{T}_sS$ is defined by
	\begin{equation*}
		\mathrm{T}_sS\coloneqq \Set{f \in \fs{F}}
		{\lim_{t \to 0^+} t^{-1}\mathrm{d}_{\fs{F},n} \left(s + t f, S\right)=0, \forall n \in \NN}.
	\end{equation*}
\end{Defn}
The adjacent cone $\mathrm{T}_sS$ is nonempty and closed. The proof is a straightforward adaptation of the arguments given in the Banach space case (cf.~\cite[Proposition~1.2]{mon1}).

Intuitively, the adjacent cone $\mathrm{T}_sS$ at a point $s \in S$ consists of all vectors \(f\) representing permissible directions of motion from \(s\), i.e., directions in which one can depart from $s$ while remaining arbitrarily close to  $S$. This idea is formalized by the condition that all pseudo-distances from  $s + t f$  to $S$ must vanish faster than the step size $t$. As we will see, if \(S\) is a differentiable submanifold, then $\mathrm{T}_sS$ is the tangent space at \(s\).

\begin{Exmp}
	\label{ex:adjacent_cone}
	Let $\fs{F}$ be the \fr space $\mathbb{R}^\infty$ of all real sequences, with the topology given by the family of seminorms $\snorm[\fs{F},n]{x_n} = |x_n|$ for $n \in \mathbb{N}$. Consider the set
	\[
	S = \{(x_i) \in \fs{F} \mid x_i \geq 0 \text{ for all } i \in \mathbb{N}\}
	\]
	and let $s$ be the zero sequence. By definition, a vector $f = (f_i) \in \fs{F}$ belongs to $\mathrm{T}_sS$ if and only if for every $n \in \mathbb{N}$,
	\[
	\lim_{t \to 0^+} t^{-1} \mathrm{d}_{\fs{F},n}(s + tf, S) = 0.
	\]
	Since $s = 0$, this simplifies to $\lim_{t \to 0^+} t^{-1} \mathrm{d}_{\fs{F},n}(tf, S) = 0$. The pseudo-distance with respect to the $n$-th seminorm is given by
	\[
	\mathrm{d}_{\fs{F},n}(tf, S) = \inf_{y \in S} \snorm[\fs{F},n]{tf - y} = \inf_{\{y_i\} \subseteq [0, \infty)} |tf_n - y_n|.
	\]
	To calculate this infimum, we consider two cases for the component $f_n$\textup{:}
	\begin{enumerate}
		\item If $f_n \geq 0$, then for $t > 0$, we have $tf_n \geq 0$. We can choose the sequence $y \in S$ such that its $n$-th component is $y_n = tf_n$. In this case, the distance is $|tf_n - tf_n| = 0$.
		
		\item If $f_n < 0$, then $tf_n < 0$. The closest non-negative number $y_n$ to $tf_n$ is $0$. Thus, the distance is $|tf_n - 0| = |tf_n| = -t f_n$.
	\end{enumerate}
	Combining these cases, we have $	\mathrm{d}_{\fs{F},n}(tf, S) = t \max(0, -f_n)$. Hence,
	\[
	\lim_{t \to 0^+} t^{-1} 	\mathrm{d}_{\fs{F},n}(tf, S) = \max(0, -f_n).
	\]
	For $f$ to be in the cone $\mathrm{T}_sS$, this limit must be $0$ for all $n \in \mathbb{N}$. The condition $\max(0, -f_n) = 0$  means $f_n \geq 0$. Since this must hold for all $n$, we conclude that $f$ is in $\mathrm{T}_sS$ if and only if $f_i \geq 0$ for all $i$. This is precisely the definition of the set $S$. Therefore, for the set of non-negative sequences at the origin, the adjacent cone is the set itself, i.e., $\mathrm{T}_0S = S$.
\end{Exmp}
We now naturally extend this idea to second-order adjacent tangency. This type of tangency was defined for Banach spaces in \cite{pa}.

\begin{Defn}\label{def.1}
	Let \( \emptyset \neq S \subset \fs{F}\), $ s \in S $, and $ e \in \fs{F} $. If there is some $ f \in \fs{F} $ such that
	\begin{equation}\label{eq:hp}
		\forall n \in \NN, \quad \lim_{t \to 0^+} t^{-2}\mathrm{d}_{\fs{F},n} \left(\left(s + t f + \tfrac{1}{2}t^2e \right), S\right)=0,
	\end{equation}
	then $ e $ is called a second-order adjacent tangent vector to \(S\) at $ s $, and we say that  $ f $ is associated with $ e $. The set of all second-order adjacent tangent vectors to \(S\) at \(s\) is denoted by \( \mathrm{T}_s^2S \).
\end{Defn}

\begin{Rem}
	If \( e \in \mathrm{T}_s^2S \) and \( f \) is its associated direction, it follows directly from the definition of \( \mathrm{T}_s^2S \) that \( f \in \mathrm{T}_sS\). Moreover, the zero vector \( \zero{\fs{F}} \) belongs to \( \mathrm{T}_sS\), as any direction can be associated with it.  
	To show that \( \mathrm{T}_s^2S \) is a cone, let \( e \in \mathrm{T}_s^2S \) with associated direction \( f \). For any positive scalar \( r \), consider the vector \( re \). By scaling \( f \) by \( r^{1/2} \), we obtain a new direction \( r^{1/2} f \) that satisfies the conditions for \( re \) to belong to \( \mathrm{T}_s^2S \). Hence, \( \mathrm{T}_s^2S \) is a cone.  
\end{Rem}
\begin{Rem}\label{rem:cone_vs_space}
As we will see in Lemma~\ref{lem:l3}, for a $C^2$-submanifold \(S\), the second-order tangent space $\mathrm{T}_{(s,f)}(\mathrm{T}S)$ consists precisely of those pairs $(f, e)$ whose acceleration component $e$ belongs to the  cone $\mathrm{T}_s^2S$ (with $f$ as the associated velocity). This relationship is clear for an {open subset} $S \subset \fs{F}$. For any $s \in S$, the set of admissible accelerations $\mathrm{T}_s^2S$ is the entire space $\fs{F}$. Consequently, the second-order tangent space $\mathrm{T}_{(s,f)}(\mathrm{T}S)$ is the set of all pairs $(f,e)$ where $f \in \mathrm{T}_sS = \fs{F}$ and $e \in \mathrm{T}_s^2S = \fs{F}$. That is, the space of all pairs is $\fs{F} \times \fs{F}$.
\end{Rem}
\begin{Rem}
	Alternatively, in Definitions \ref{def:2} and \ref{def.1}, we could use the  metric  
	\begin{equation} \label{metric}
		\mt_{\fs{F}}(x, y) = \sum_{n=1}^{\infty} \frac{1}{2^n} \frac{\snorm[\fs{F},n]{x - y}}{1 + \snorm[\fs{F},n]{x - y}}
	\end{equation}  
	which induces the same topology on \( \fs{F} \) as the sequence of seminorms.  
	This equivalence holds because \( \mt_{\fs{F}}(\cdot, S) \to 0 \) if and only if \( \mathrm{d}_{\fs{F},n}(\cdot, S) \to 0 \) for all positive integers \( n \). In other words, both \( \mt_{\fs{F}} \) and the sequence 
	\( (\mathrm{d}_{\fs{F},n}) \) yield the same conclusions about convergence to the set \( S \).  
	
	However,   \fr  spaces lack a canonical metric; multiple metrics induce the same topology and different distances. Seminorms offer a more flexible and practical framework by directly reflecting the underlying topology. 
	
\end{Rem}
Next, we provide natural and straightforward extensions of adjacent and second-order adjacent cones to \fr manifolds, analogous to the Banach manifolds case (see \cite{mon2, mon1}).
\begin{Defn} \label{def:sec2}
	Let $  S \subset \fs{M}$, \(s \in S\). A vector $v \in  \mathrm{T}_sS$ is called an adjacent tangent vector to $S$ at $s$ if there exists a chart $(U, \va)$ around $s$ such that
	\begin{equation}\label{eq:32}
		\forall n \in \NN, \quad \lim_{t \to 0^+} t^{-1}\mathrm{d}_{\fs{F},n} \Big( \va(s) + 
		t \dd \va(s)(v), \va (U \cap S)\Big)=0.
	\end{equation}
	The set of all such \( v \) is denoted by
	\( \mathrm{T}_sS\).
\end{Defn}
\begin{lem}\label{lem:2}
	The set \( \mathrm{T}_sS\) defined in Definition \ref{def:sec2}  is independent of the choice of chart.
\end{lem}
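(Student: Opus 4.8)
The plan is to prove that the defining condition \eqref{eq:32} does not depend on the chart; since the tangent space $\TpM{s}{M}$ and the vector $v$ are themselves chart-independent, this is exactly what is needed. Let $(U,\va)$ and $(V,\psi)$ be charts around $s$ and suppose \eqref{eq:32} holds for $(U,\va)$. Write $x_0=\va(s)$, $y_0=\psi(s)$, and let $\phi=\psi\circ\va^{-1}$ be the transition map on the overlap, so that $y_0=\phi(x_0)$ and, by the chain rule (equivalently by \eqref{eq:1}), $v_\psi:=\dd\psi(s)(v)=\dd\phi_{x_0}(v_\va)$, where $v_\va:=\dd\va(s)(v)$. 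It suffices to establish one implication, since the reverse follows by interchanging the charts and replacing $\phi$ by $\phi^{-1}$.

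Before estimating I would fix three ingredients. First, I may assume the seminorms are increasing, $\snorm[\fs{F},1]{\cdot}\le\snorm[\fs{F},2]{\cdot}\le\cdots$; this is harmless and lets me dominate any finite family of seminorms by a single one. Second, continuity of the linear map $\dd\phi_{x_0}$ and joint continuity of $\dd^2\phi$ give, for each index $n$, indices $m_1,m_2$, constants $C_1,C_2$, and a convex seminorm-neighbourhood $\mathcal O$ of $x_0$ such that
\begin{equation*}
\snorm[\fs{F},n]{\dd\phi_{x_0}(w)}\le C_1\snorm[\fs{F},m_1]{w},\qquad \snorm[\fs{F},n]{\dd^2\phi(\xi)(a,b)}\le C_2\,\snorm[\fs{F},m_2]{a}\,\snorm[\fs{F},m_2]{b}\quad(\xi\in\mathcal O).
\end{equation*}
Third, since $\phi$ is at least $C^2$, the second-order Taylor formula with integral remainder holds:
\begin{equation*}
\phi(x_0+w)=y_0+\dd\phi_{x_0}(w)+\int_0^1(1-\theta)\,\dd^2\phi(x_0+\theta w)(w,w)\,d\theta .
\end{equation*}

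The core of the argument is the choice of near-minimizers. Fix $n$ and put $M=\max\{n,m_1,m_2,M_0\}$, where $M_0$ is chosen so that $\{\xi:\snorm[\fs{F},M_0]{\xi-x_0}<\delta\}$ lies in both $\mathcal O$ and $\va(U\cap V)$. For each small $t>0$ I would select $a_t\in\va(U\cap S)$ with
\begin{equation*}
\snorm[\fs{F},M]{x_0+t v_\va-a_t}<\mathrm{d}_{\fs{F},M}\big(x_0+t v_\va,\va(U\cap S)\big)+t^2 .
\end{equation*}
By hypothesis the distance is $o(t)$, so $r_t:=a_t-x_0-t v_\va$ satisfies $\snorm[\fs{F},M]{r_t}=o(t)$ and hence $\snorm[\fs{F},M]{a_t-x_0}=O(t)\to0$; for small $t$ this places $a_t$, and the whole segment $[x_0,a_t]$, inside $\mathcal O\cap\va(U\cap V)$, so $a_t\in\va(U\cap V\cap S)$ and $\phi(a_t)\in\psi(V\cap S)$. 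Applying Taylor's formula with $w=a_t-x_0=t v_\va+r_t$ and using $\dd\phi_{x_0}(t v_\va)=t v_\psi$ gives
\begin{equation*}
\phi(a_t)-y_0-t v_\psi=\dd\phi_{x_0}(r_t)+\int_0^1(1-\theta)\,\dd^2\phi(x_0+\theta w)(w,w)\,d\theta .
\end{equation*}
The linear term is bounded by $C_1\snorm[\fs{F},m_1]{r_t}=o(t)$, and the remainder by $C_2\snorm[\fs{F},m_2]{w}^2\le C_2\snorm[\fs{F},M]{a_t-x_0}^2=O(t^2)$. Hence $\snorm[\fs{F},n]{\phi(a_t)-y_0-t v_\psi}=o(t)$, and since $\phi(a_t)\in\psi(V\cap S)$ we obtain $t^{-1}\mathrm{d}_{\fs{F},n}(y_0+t v_\psi,\psi(V\cap S))\to0$ for every $n$, which is \eqref{eq:32} in the chart $(V,\psi)$.

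I expect the main obstacle to be the Fréchet-specific bookkeeping rather than any single hard computation: because the distance in \eqref{eq:32} is measured one seminorm at a time, one must ensure that a \emph{single} family of near-minimizers $a_t$ simultaneously controls all the finitely many seminorms entering the derivative estimates — this is precisely what arranging the seminorms to be increasing and passing to the dominating index $M$ accomplishes — and that these minimizers remain both in the chart overlap and in the neighbourhood $\mathcal O$ where the continuity bound on $\dd^2\phi$ is uniform.
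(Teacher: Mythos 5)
Your proof is correct and follows essentially the same route as the paper's: select near-minimizers of the pseudo-distance in the first chart, push them through the transition map \( \phi = \psi \circ \va^{-1} \), and use a Taylor expansion of \( \phi \) at \( \va(s) \) to show that the resulting points of \( \psi(V \cap S) \) approximate \( \psi(s) + t\, \dd\psi(s)(v) \) to order \( o(t) \) in every seminorm. The only differences are technical refinements on your side: you use the second-order integral remainder together with explicit seminorm domination (the dominating index \( M \), the uniform bound on \( \dd^2\phi \) near \( \va(s) \), and the convex ball keeping the segment inside the chart overlap), where the paper works with the first-order remainder \( \mathrm{R}_1\phi \) and asserts its \( o(t) \) decay without spelling out this bookkeeping.
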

\begin{proof}
	Let $   S \subset \fs{M}$, \(s \in S\), and $ v \in \mathrm{T}_sS$. Let  $(U, \va)$ and $(V, \psi)$ be two charts around $s$. Assume
	Equation \eqref{eq:32} holds for  $(U, \va)$. We show it holds for $(V, \psi)$. 
	
	Since Equation \eqref{eq:32} holds for $(U, \va)$, there is a family of functions \(h_n(t)\colon(0,\epsilon) \to 
	\va (U \cap S)\) for each \(n \in \NN\),
	such that
	\begin{equation*}
		\forall n \in \NN, \quad \lim_{t \to 0^+} t^{-1}\mathrm{d}_{\fs{F},n} 
		\Big( \va(s) + t \dd \va(s)(v), h_n(t)\Big)=0.
	\end{equation*}
	Define   $ \bl{h}_n(t)= -t^{-1}\big(\va(s) + t \dd \va(s)(v)-h_n(t)\big)$ on \((0,\epsilon)\). Then, \(\lim_{t \to 0^+} \bl{h}_n(t)=0\) in all seminorms, and for small \(t\),
	we have
	\[
	\va(s) + t \big( \dd \va(s)(v) + \bl{h}_n(t) \big) \in \va(U \cap S).
	\]
	
	Let \( \phi = \psi \circ \va^{-1} \)  be the transition map.
	By the chain rule, 
	\(
	\dd \psi(s) = \dd \phi (\va(s)) \big( \dd \va(s) \big).
	\)
	Consider the Taylor expansion (Proposition I.2.3, \cite{neeb}) of  $ \phi $ around \(\va(s)\) up to first order
	\[
	\phi(x) = \psi(s) + \dd \phi ({\va(s)})(x - \va(s))  +  \mathrm{R}_1\phi(x)
	\]
	where  $\mathrm{R}_1\phi(x)$ is the first-order remainder.
	Substituting
	\(x = \va(s) + t(\dd{\va(s)}(v) + \bl{h}_n(t)) \) into the Taylor expression yields
	\[
	\phi \big( \va(s) + t \big( \dd \va(s)(v) + \bl{h}_n(t) \big) \big) = \psi(s) + t \big( \dd \psi(s)(v) + \dd \phi (\va(s))(\bl{h}_n(t)) \big) + \mathrm{R}_1\phi(x).
	\]
	Let \( \bl{k}_n(t) = \dd \phi (\va(s))(\bl{h}_n(t))+t^{-1}\mathrm{R}_1\phi(x) \) on \((0,\varepsilon)\), where \( 0 <\varepsilon \leq \epsilon\) is sufficiently small. Since \(\lim_{t \to 0^+}\bl{k}_n(t)  \to 0\) (for all seminorms), for sufficiently small \(t\) we have
	\[
	\psi(s) + t \big( \dd \psi(s)(v) + \bl{k}_n(t) \big) \in \psi(V \cap S).
	\]
	Thus,
	\[
	\forall n \in \NN, \quad \lim_{t \to 0^+} t^{-1} \mathrm{d}_{\fs{F},n} \Big( \psi(s) + t \dd \psi(s)(v), k_n(t) \Big) = 0,
	\]
	where \(k_n(t) = t\bl{k}_n(t)+\psi(s) + t \dd \psi(s)(v)\) on \((0,\varepsilon)\). This implies  Equation \eqref{eq:32} holds for $(V, \psi)$.
\end{proof}
The set \( \mathrm{T}_s S \) is a closed cone in \( \mathrm{T}_s M \). This follows directly from the seminorm condition in Definition~\ref{def:sec2}, as limits and positive scaling preserve the structure. 
For \( C^r \)-submanifolds, adjacent tangent vectors coincide with  tangent vectors. While this result is analogous to the Banach manifold case~\cite{mon2}, we outline the proof in the \fr setting for completeness.

Suppose $ \fs{F}_1 $ is a closed subspace of the \fr space $ \fs{F} $ that splits it. Let $ \fs{F}_2 $ be a topological complement, such that $ \fs{F} = \fs{F}_1 \oplus \fs{F}_2 $. A subset $ {S} \subset \fs{M} $ is called a (split) $ C^r $-\fr submanifold modeled on $ \fs{F}_1 $, for $ 1 \leq r \leq k $, if for any $ p \in {S} $ there exists a $ C^r $-diffeomorphism $ \varphi \colon U \to V$, where $ U \ni p $ is open in $\fs{M}$ and $V$ is an open subset of $\fs{F}$. The set $V$ is required to be a product neighborhood of the form $V = W \times O$, where $W \opn \fs{F}_1$ and $O \opn \fs{F}_2$. The map must then satisfy
\[
\varphi ({S} \cap U) = W \times \set{\zero{\fs{F}_2}}.
\]
Then ${S}$ is a $C^r$-\fr manifold modeled on $\fs{F}_1$, with the maximal $C^r$-atlas including the mappings $\phi|_{{U \cap {S}}}\colon {U \cap {S}} \to {V \cap {S}}$ for all $\varphi$ as described above.

Suppose \(v \in \mathrm{T}_s\fs{M}\) is an adjacent vector to \(S\) at \(s \in S\).
By Lemma \ref{lem:2}, there exists a submanifold chart \((U,\va)\) around \(s\) such that for some open set \( W \opn \fs{F}_1\), we have
$
\va(U \cap S) =  W \times \set{\zero{\fs{F}_2}}
$. 
By Definition \ref{def:sec2}, the element \(s\) satisfies \eqref{eq:32} if and only if there exists a family of functions \(h_n(t)\colon(0,\epsilon) \to \va (U \cap S)\)
such that
\begin{equation*}
	\forall n \in \NN, \quad \lim_{t \to 0^+} t^{-1}\mathrm{d}_{\fs{F},n} \Big(\va(s) + t \dd \va(s)(v), h_n(t)\Big)=0.
\end{equation*}
Define \(\bl{h}_n(t)\) \(= -t^{-1}\big(\va(s) + t \dd \va(s)(v)-h_n(t)\big)\) on \((0,\epsilon)\). Then, \(\lim_{t \to 0^+} \bl{h}_n(t)=0\) in all seminorms, and for small \(t\), we have
\[
\forall n \in \NN, \quad \dd \va(s)(v)+\bl{h}_n(t) \in \fs{F}_1 \times \set{\zero{\fs{F}_2}}.
\]
Since \(\fs{F_1}\) is closed and each  \(\dd\va(s)(v) + \bl{h}_n(t)\) lies in $\fs{F}_1$, taking the limit \(t \to 0^{+}\) yields \(\dd \va(s)(v) \in \fs{F_1}\). Hence, \(v\) is a tangent vector to \(S\) at \(s\).

Conversely, let \( v \in \mathrm{T}_sS\) be a  tangent vector, and \( (U, \varphi) \) a submanifold chart.
By definition of the tangent space, the curve
\(
t \mapsto \varphi(s) + t \, \dd\varphi(s)(v)
\)
lies entirely in \( \varphi(U \cap S) \) for small \( t \). Consequently, \( s \) satisfies \eqref{eq:32}, and hence \( v \) is an adjacent tangent vector to \( S \) at \( s \).

In the following definition and lemma, we will use the notation introduced in \eqref{eq:n1} and \eqref{eq:n2}.

\begin{Defn} \label{def:sec}
	Let \( S \subset \fs{M} \), \( s \in S \), and \( v \in \mathrm{T}_sS\). A vector \( w \in \TS{v}(\TM{M}) \) is called a second-order adjacent tangent vector to \( S \) at \( s \) \textup{(}associated with \( v \)\textup{)} if there exists a chart \( (U, \varphi) \) about \( s \) such that the following two conditions hold:
\begin{enumerate}[label=\textup{(\roman*)}]
	\item $w_{\va_*,1} = v_{\va}$,
	\item For all $n \in \NN$,
	\begin{equation}\label{eq:3}
		\lim_{t \to 0^+} 
		t^{-2}\,\mathrm{d}_{\fs{F},n}\!\left(
		\va(s) + t v_{\va} + \tfrac{1}{2}t^2 w_{\va_*,2},\,
		\va(U \cap S)
		\right) = 0 .
	\end{equation}
\end{enumerate}

	Here, $v_{\va} \coloneqq \dd \varphi_s(v)$ is the local representation of $v$, while $w_{\va_*,1}$ and $w_{\va_*,2}$ are the components of the local representation of $w$, given by $w_{\va_*} \coloneqq \dd {(\va_*)}_v (w) = (w_{\va_*,1}, w_{\va_*,2})$. 
	The set of all such vectors \( w \) is denoted by \( \mathrm{T}_s^2S \).
\end{Defn}
\begin{lem}\label{lem:1}
	The definition of \( \mathrm{T}_s^2S \) in	Definition \ref{def:sec} is independent of the choice of chart.	
\end{lem}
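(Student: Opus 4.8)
The plan is to fix two charts $(U,\va)$ and $(V,\psi)$ about $s$, assume conditions (i) and (ii) of Definition~\ref{def:sec} hold in $(U,\va)$, and transfer them to $(V,\psi)$ through the transition map $\phi=\psi\circ\va^{-1}$, exactly as in Lemma~\ref{lem:2} but carried out to second order. Write $a=\va(s)$ and $b=\psi(s)=\phi(a)$. Condition (i) is purely algebraic: the change-of-coordinates rule \eqref{eq:2}, recorded in \eqref{eq:n2}, gives $w_{\psi_*,1}=\dd\phi_a(w_{\va_*,1})=\dd\phi_a(v_{\va})=v_\psi$, so (i) passes to $(V,\psi)$ at once. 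The content is therefore to transfer the $o(t^2)$-condition \eqref{eq:3}.

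The main step is a second-order Taylor analysis. Fix a target seminorm index $N$ for the $\psi$-chart. For each $m\in\NN$, condition \eqref{eq:3} in $(U,\va)$ supplies a near-optimal point $y_m(t)\in\va(U\cap S)$ with $\snorm[\fs{F},m]{x_{\va}(t)-y_m(t)}=o(t^2)$, where $x_{\va}(t):=a+tv_{\va}+\tfrac12 t^2 w_{\va_*,2}$. Assuming, as we may for a \fr space, that the defining seminorms form an increasing family, a single point $y:=y_M(t)$ attached to a sufficiently large index $M=M(N)$ is then simultaneously $o(t^2)$-close to $x_{\va}(t)$ in every seminorm $\snorm[\fs{F},m]{\cdot}$ with $m\le M$. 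Since $x_{\va}(t)\to a$ and $M$ may be chosen to dominate the seminorms defining a basic neighborhood of $a$ contained in $\va(U\cap V)$, for small $t$ the point $y$ lies in $\va(U\cap V\cap S)$, whence $\phi(y)\in\psi(V\cap S)$.

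I would then expand $\phi(y)$ about $a$ by the second-order Taylor formula with remainder from the $C^k$-calculus of \cite[Proposition~I.2.3]{neeb}. Writing $h=y-a=tv_{\va}+\tfrac12 t^2 w_{\va_*,2}+\epsilon(t)$ with $\snorm[\fs{F},m]{\epsilon(t)}=o(t^2)$ for $m\le M$, the linear term $\dd\phi_a(h)$ contributes $tv_\psi+\tfrac12 t^2\dd\phi_a(w_{\va_*,2})+\dd\phi_a(\epsilon(t))$, the quadratic term $\tfrac12\dd^2\phi_a(h,h)$ contributes $\tfrac12 t^2\dd^2\phi_a(v_{\va},v_{\va})$ together with cross-terms of order $o(t^2)$, and the remainder is $o(t^2)$ in each seminorm $\snorm[\fs{F},m]{\cdot}$, $m\le M$, by continuity of $\dd^2\phi$. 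Collecting powers of $t$ and invoking \eqref{eq:n2}, that is $w_{\psi_*,2}=\dd^2\phi_a(v_{\va},v_{\va})+\dd\phi_a(w_{\va_*,2})$, the quadratic coefficient is exactly $\tfrac12 w_{\psi_*,2}$, so that $\phi(y)=b+tv_\psi+\tfrac12 t^2 w_{\psi_*,2}+o(t^2)$ in the $N$-th seminorm. As $\phi(y)\in\psi(V\cap S)$, this yields $t^{-2}\,\mathrm{d}_{\fs{F},N}\!\big(b+tv_\psi+\tfrac12 t^2 w_{\psi_*,2},\,\psi(V\cap S)\big)\to0$; since $N$ was arbitrary, \eqref{eq:3} holds in $(V,\psi)$, completing the argument.

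The step I expect to be the main obstacle is precisely the \fr feature that $\phi$, its differential, and its Hessian mix seminorms: bounding the $N$-th seminorm of $\phi(y)$ needs smallness of $\epsilon(t)$ and of the error terms in several seminorms simultaneously, not merely in the $N$-th. This is what forces the passage from the per-seminorm hypothesis \eqref{eq:3} to a single witness $y_M(t)$ good in all seminorms up to $M(N)$; it is the increasing (directed) structure of the seminorm family, together with the continuity of $\dd\phi$ and $\dd^2\phi$ --- which bound the $N$-th output seminorm in terms of finitely many input seminorms --- that legitimizes the transfer and makes the remainder genuinely $o(t^2)$.
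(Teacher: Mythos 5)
Your proposal is correct and follows essentially the same route as the paper's proof: transfer through the transition map \( \phi=\psi\circ\va^{-1} \), a second-order Taylor expansion of \( \phi \) about \( \va(s) \) (Proposition I.2.3 of \cite{neeb}), identification of the quadratic coefficient with \( \tfrac12 w_{\psi_*,2} \) via \eqref{eq:n2}, and majorization of the pseudo-distance to \( \psi(V\cap S) \) by the pseudo-distance to the image of a witness point lying in \( \psi(V\cap S) \). The one place you genuinely diverge is the seminorm bookkeeping, and there your version is tighter: the paper works with a separate witness family \( h_n(t) \) for each seminorm and asserts that the associated errors \( \bl{h}_n(t) \) tend to zero \emph{in all} seminorms, a claim that does not follow as written from the per-seminorm hypothesis \eqref{eq:3}; you instead fix the target seminorm \( N \) first and use directedness of the seminorm family, together with the fact that continuity of \( \dd\phi \) and \( \dd^2\phi \) bounds the \( N \)-th output seminorm by finitely many input seminorms, to produce a single witness \( y_{M(N)}(t) \) that is \( o(t^2) \)-close in every seminorm up to \( M(N) \). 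This quantifier order (choose \( M(N) \) from the continuity moduli of \( \dd\phi_a \) and \( \dd^2\phi \) near \( (\va(s),v_\va,v_\va) \) \emph{before} selecting the witness) is exactly what makes the remainder estimate legitimate in the \fr setting, so your refinement repairs the step the paper glosses over at no extra cost.
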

\begin{proof}
	Let \( S \subset \fs{M} \), \( s \in S \), and \( v \in \mathrm{T}_sS\). Consider two charts \( (U, \varphi) \) and \( (V, \psi) \) around \( s \), and let \( \phi = \psi \circ \varphi^{-1} \) be the transition map. Suppose \( w \in \TS{v}(\TM{M}) \) satisfies $w_{\va_*,1} = v_{\va}$ and \eqref{eq:3} holds in \( (U, \varphi) \). We will show that \eqref{eq:3} also holds in \( (V, \psi) \). 
	
	Equation  \eqref{eq:3} holds if and only if there exists a family of function \(h_n(t)\colon(0,\epsilon) \to \va (U \cap S)\)
	such that
	\[
	\forall n \in \NN, \quad \lim_{t \to 0^+} t^{-2}\mathrm{d}_{\fs{F},n} 
	\Big(\big(\va(s) + t v_{\va} + \tfrac{1}{2}t^2w_{\va_*,2} \big), h_n(t) \Big)=0.
	\]
	Define  \(\bl{h}_n(t) \coloneqq -t^{-2}\big(\va(s) + t v_{\va} + \tfrac{1}{2}t^2w_{\va_*,2}-h(t)\big)\) on \((0,\epsilon)\). Then, \(\lim_{t \to 0^+} \bl{h}_n(t)=0\) in all seminorms, and for small \(t\),
	we have
	\[
	k_n(t) \coloneqq \va(s) + t v_{\va} + \tfrac{1}{2}t^2 \big(w_{\va_*,2} + \bl{h}_n(t) \big) \in \va(U \cap S).
	\]
	Without loss of generality, we choose \(\epsilon\) sufficiently small so that \(\phi(k_n(t)) \in \psi (U \cap V \cap S)\). 
	
	We aim to show that  
	\begin{equation}\label{eq:new}
		\forall n \in \NN, \quad \lim_{t \to 0^+} t^{-2}\mathrm{d}_{\fs{F},n} \Big(
		\big(\psi(s) + tv_{\psi} + \tfrac{1}{2}t^2w_{\psi_*,2}\big), \psi(V \cap S)\Big) = 0.
	\end{equation}
	To this end, we will express the terms in the limit condition using the chart \((V,\psi)\),	
	based on the given relationships between \( v_{\va} \), \( v_{\psi} \), \( w_{\va_*,2} \), and \( w_{\psi_*,2} \),
	and \(\phi\), namely
	\begin{equation}\label{eq:4}
		v_{\psi} = \dd \phi_{\va(s)}(v_{\va}),\, w_{\psi*,1} = v_{\va} , \text{and } w_{\psi*,2} = \dd^2 \phi_{\va(s)}(v_{\va}, w_{\va_*,1} )+\dd \phi_{\va(s)}( w_{\va_*,2}) .
	\end{equation}
	Using the Taylor expansion up to second order of \(\phi\) around \(\va(s)\), we have
	\[
	\phi(x) = \psi(s) + \dd \phi_{\va(s)}(x - \va(s)) + \tfrac{1}{2}\dd^2\phi_{\va(s)}\big(x - \va(s), x - \va(s)\big) + \mathrm{R}_2\phi(x)
	\]
	where  $\mathrm{R}_2\phi(x)$ is the second-order remainder.
	Substituting
	\(x = k_n(t)\) into the Taylor expansion results in
	\[
	\phi\big(k_n(t)\big) = \psi(s) + \dd \phi_{\va(s)} \big( k_n(t) - \va(s)\big) 
	+ \tfrac{1}{2}\dd^2\phi_{\va(s)} \big( k_n(t)- \va(s), k_n(t)- \va(s) \big) + \mathrm{R}_2\phi(x).
	\]
	Applying the expressions in \eqref{eq:4}  and substituting  \(k_n(t)\) into the later equation yields 
	\begin{equation}\label{eq:new2}
		\phi\big(k_n(t)\big) = \psi(s) + tv_{\psi}
		+ \tfrac{1}{2} t^2 \big( w_{\psi_*,2} \big) + \mathrm{R}_2\phi(x).
	\end{equation}
	Since \(\phi(k_n(t)) \in \psi ( V \cap S)\), for sufficiently small $t>0$, there exists a $h_n(t) \in S$ such that \( \phi(k_n(t))= \psi(h_n(t))\). Thus, Equation \eqref{eq:new2} implies
	\[
	\forall n \in \NN, \quad \lim_{t \to 0^+} t^{-2}\mathrm{d}_{\fs{F},n} \Big(
	\big(\psi(s) + tv_{\psi} + \tfrac{1}{2}t^2w_{\psi_*,2}\big), \psi(h_n(t))\Big) =  \lim_{t \to 0^+}t^{-2}\mathrm{R}_2\phi(x)=0.
	\]
	Since \(\psi(h_n(t)) \in \psi ( V \cap S)\), it follows that
	the pseudo-distances to the set \(\psi(V \cap S)\) is at most the pseudo-distances to the specific point \(\psi(h_n(t))\), i.e.,
	\[
	\mathrm{d}_{\fs{F},n} \Big( \psi(s) + t v_{\psi} + \tfrac{1}{2} t^2 w_{\psi_*,2}, \psi(V \cap S) \Big) 
	\leq 
	\mathrm{d}_{\fs{F},n} \Big( \psi(s) + t v_{\psi} + \tfrac{1}{2} t^2 w_{\psi_*,2}, \psi(h_n(t)) \Big).
	\]
	Thus, Equation \eqref{eq:new} holds true.
\end{proof}
\begin{Rem}
	If the manifold $\fs{M}$ coincides with its model space $\fs{F}$, then 
	Definition~\ref{def:sec} reduces to Definition~\ref{def.1}. To see this, the arguments analogous to those in the Banach case can be applied; see~\cite[Remark 2.12]{mon1}.
\end{Rem}
The proof of the following result relies primarily on the properties of 
submanifold charts and on limit arguments, which can be adapted from 
Banach manifolds (see~\cite[Theorem~2.13]{mon1}) to our context with 
minor modifications.
\begin{lem}\label{lem:l3}
	Let $S$ be a $C^2$-submanifold of $\fs{M}$ modeled on $\fs{F}_1$, and let $s \in S$.  
	Then the following statements are equivalent:
	\begin{enumerate}[label=\textup{(\roman*)}]
		\item $w \in \mathrm{T}^2_s S$ with associated tangent vector $v \in \mathrm{T}_s S$;
		\item $w \in \TS{v}(\mathrm{T}S)$ for some $v \in \mathrm{T}_s S$.
	\end{enumerate}
\end{lem}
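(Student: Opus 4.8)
The plan is to reduce everything to a submanifold chart and translate both conditions into explicit linear constraints on the components of $w$. By Lemmas~\ref{lem:1} and~\ref{lem:2}, $\mathrm{T}^2_sS$ and $\mathrm{T}_sS$ are chart-independent, so I fix a submanifold chart $(U,\va)$ with $\va(U\cap S)=W\times\set{\zero{\fs{F}_2}}$, where $\fs{F}=\fs{F}_1\oplus\fs{F}_2$ and $W\opn\fs{F}_1$. Write $\va(s)=(a_1,\zero{\fs{F}_2})$ with $a_1\in W$, and decompose $v_\va=(v_1,v_2)$ and $w_{\va_*,2}=(e_1,e_2)$ along $\fs{F}_1\oplus\fs{F}_2$. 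Because $S$ is a $C^2$-submanifold, the already established identification of adjacent tangent vectors with ordinary tangent vectors forces any $v\in\mathrm{T}_sS$ to have $v_2=\zero{\fs{F}_2}$. In this chart $\mathrm{T}S$ is the flat submanifold $(W\times\set{\zero{\fs{F}_2}})\times(\fs{F}_1\times\set{\zero{\fs{F}_2}})$ of $\mathrm{T}U$, so $w\in\TS{v}(\mathrm{T}S)$ holds exactly when both $w_{\va_*,1}$ and $w_{\va_*,2}$ lie in $\fs{F}_1\times\set{\zero{\fs{F}_2}}$; that is, when $e_2=\zero{\fs{F}_2}$ and $w_{\va_*,1}\in\fs{F}_1\times\set{\zero{\fs{F}_2}}$.

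The heart of the matter is then to show that condition~\eqref{eq:3}, together with $v_\va=w_{\va_*,1}$ and $v_2=\zero{\fs{F}_2}$, is equivalent to the single vanishing $e_2=\zero{\fs{F}_2}$. Set $P(t)=\va(s)+t\,v_\va+\tfrac12 t^2 w_{\va_*,2}$, whose $\fs{F}_2$-component is $\tfrac12 t^2 e_2$. One direction is immediate: if $e_2=\zero{\fs{F}_2}$, then $P(t)\in\fs{F}_1\times\set{\zero{\fs{F}_2}}$ with $\fs{F}_1$-part tending to $a_1\in W$, so $P(t)\in\va(U\cap S)$ for small $t$ and the limit in~\eqref{eq:3} is trivially zero. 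For the converse I would use the topological splitting: let $\pi_2\colon\fs{F}\to\fs{F}_2$ be the continuous projection along $\fs{F}_1$. Since $\pi_2(y)=\zero{\fs{F}_2}$ for every $y\in\va(U\cap S)$, the quantity $\pi_2(P(t)-y)=\tfrac12 t^2 e_2$ does not depend on $y$. Continuity of $\pi_2$ (with the defining seminorms taken directed, so the finite maximum arising from continuity collapses to a single seminorm $\snorm[\fs{F},N_m]{\cdot}$) gives $\snorm[\fs{F}_2,m]{\pi_2(z)}\le C_m\snorm[\fs{F},N_m]{z}$; applying this to $z=P(t)-y$ and taking the infimum over $y\in\va(U\cap S)$ yields $\tfrac12 t^2\snorm[\fs{F}_2,m]{e_2}\le C_m\,\mathrm{d}_{\fs{F},N_m}(P(t),\va(U\cap S))$. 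Dividing by $t^2$ and letting $t\to0^+$ forces $\snorm[\fs{F}_2,m]{e_2}=0$ for all $m$, hence $e_2=\zero{\fs{F}_2}$.

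Assembling these computations proves the equivalence: (i) holds for the velocity $v$ determined by $v_\va=w_{\va_*,1}$ precisely when $w_{\va_*,1},w_{\va_*,2}\in\fs{F}_1\times\set{\zero{\fs{F}_2}}$, which is exactly the coordinate form of (ii); in both statements the relevant velocity lies in $\mathrm{T}_sS$ and is identified with the base-velocity component $w_{\va_*,1}$. I expect the main obstacle to be the converse direction: in a genuine \fr space the hypothesis~\eqref{eq:3} controls each pseudo-distance $\mathrm{d}_{\fs{F},n}$ only separately, with a priori different near-minimizers for different $n$, so one cannot naively read off the transverse acceleration. The clean way around this is the observation that $\pi_2(P(t)-y)$ is independent of $y\in\va(U\cap S)$, which lets the single continuous projection $\pi_2$ convert per-seminorm control into the vanishing of $e_2$ without ever having to synchronize approximants across seminorms; the one technical point to record is the directedness of the defining seminorm family.
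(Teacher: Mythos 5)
Your proposal is correct (under the same standing convention on the seminorm family that the paper itself needs), and it shares the paper's overall skeleton: reduce via Lemmas~\ref{lem:1} and~\ref{lem:2} to a submanifold chart with $\va(U\cap S)=W\times\set{\zero{\fs{F}_2}}$, translate both (i) and (ii) into the coordinate conditions $w_{\va_*,1},w_{\va_*,2}\in\fs{F}_1$, and observe that the direction ``coordinates in $\fs{F}_1$ implies the limit \eqref{eq:3}'' is trivial because $P(t)\in\va(U\cap S)$ for small $t$ --- this easy half is word-for-word the paper's converse. Where you genuinely diverge is the hard implication. The paper chooses, for each seminorm index $n$, near-minimizers $h_n(t)\in\va(U\cap S)$, rewrites them as $h_n(t)=P(t)+\tfrac12 t^2\bl{h}_n(t)$, deduces $w_{\va_*,2}+\bl{h}_n(t)\in\fs{F}_1$, and concludes $w_{\va_*,2}\in\fs{F}_1$ by letting $t\to0^+$ and invoking closedness of $\fs{F}_1$; the assertion there that $\bl{h}_n(t)\to 0$ ``in all seminorms'' is in fact only justified in the $n$-th seminorm, so the paper's closedness step tacitly amounts to the principle ``pseudo-distance zero in every seminorm of a \emph{directed} family, plus closedness, implies membership'' --- precisely the synchronization issue you identify. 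Your projection argument replaces this qualitative limit by a quantitative one: since $\pi_2(P(t)-y)=\tfrac12 t^2 e_2$ for \emph{every} $y\in\va(U\cap S)$, continuity of $\pi_2$ yields $\tfrac12 t^2\snorm[\fs{F}_2,m]{e_2}\le C_m\,\mathrm{d}_{\fs{F},N_m}\big(P(t),\va(U\cap S)\big)$, and $e_2=\zero{\fs{F}_2}$ follows without ever having to produce a single approximant good for several seminorms at once. This buys robustness and makes the directedness hypothesis explicit and minimal (it enters only to collapse the continuity estimate for $\pi_2$ to one seminorm), whereas the paper's route is shorter but leans on that convention silently. One imprecision you share with the paper: Definition~\ref{def:sec} demands $w_{\va_*,1}=v_\va$ for the associated vector, while in (ii) the base point $v$ of $\TS{v}(\mathrm{T}S)$ need not coincide with $w_{\va_*,1}$; both proofs implicitly read the two statements existentially and take the relevant velocity to be the one with coordinates $w_{\va_*,1}$, which is the intended reading of the lemma.
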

\begin{proof}
	Suppose \(w \in \mathrm{T}^2_sS\) and \(v\) is its associated vector. Then \(\TS{v}(\mathrm{T}S)\)
	is the tangent space  at \(v\) to \(\mathrm{T}S\). By Lemma \ref{lem:1}, there exists a submanifold chart \((U,\va)\) at \(s\) for \(S\), such that  for some  \( W \opn \fs{F_1}\), we have
	\begin{equation}\label{eq:f2}
		\va(U \cap S) = W \times \set{\zero{\fs{F}_2}},
	\end{equation}
	where \(\fs{F}_2\) is a complement of \( \fs{F}_1 \).
	The condition
	\begin{equation*}
		\forall n \in \NN, \quad \lim_{t \to 0^+} t^{-2}\mathrm{d}_{\fs{F},n} \Big(\big(\va(s) + t v_{\va} + \tfrac{1}{2}t^2w_{\va_*,2} \big), \va (U \cap S)\Big)=0
	\end{equation*}
	is valid if and only if  there exists a family of functions \(h_n(t)\colon(0,\epsilon) \to \va(U \cap S)\)  such that
	\begin{equation*}
		\forall n \in \NN, \quad \lim_{t \to 0^+} t^{-2}\mathrm{d}_{\fs{F},n} \Big(\big(\va(s) + t v_{\va} + \tfrac{1}{2}t^2w_{\va_*,2} \big), h_n(t)\Big)=0.
	\end{equation*}
	Define 
	\begin{equation*}
		\bl{h}_n(t) \coloneqq	-t^{-2} \Big(\big(\va(s) + t v_{\va} + \tfrac{1}{2}t^2w_{\va_*,2} \big) - h_n(t)\Big).
	\end{equation*}
	Therefore, 
	\(\lim_{t \to 0^+} \bl{h}_n(t) =0\), in all seminorms. 
	Moreover, for small \(t\), we have
	\begin{equation}\label{eq:fl3}
		\va(s) + t v_{\va} + \tfrac{1}{2}t^2(w_{\va_*,2} + \bl{h}_n(t)) \in \fs{F}_1.
	\end{equation}
	Since $ \mathrm{T}_sS$ is the tangent space at \(s\) to \(S\) and \(v \in \mathrm{T}_sS\), it follows that \(v_{\va} \in \fs{F}_1\).
	
	Furthermore, from Equation \eqref{eq:fl3}, for all \(n \in \nn\) we have
	\[
	w_{\va_*,2} + \bl{h}_n(t) \in \fs{F}_1, \quad \forall t > 0.
	\]
	Taking the limit as  \(t \to 0^+\), we deduce  that \(w_{\va_*,2} \in \fs{F}_1\). Since \(w_{\va_*,1} = v_{\va} \in \fs{F}_1\), it follows that \( w_{\va_*}=(w_{\va_*,1}, w_{\va_*,2}) \in \fs{F}_1 \times \fs{F}_1 \), which implies that \(w \in \TS{v}(TS)\).
	
	Conversely, let $w \in \TS{v}(TS)$. Since $S$ is a $C^2$-submanifold of $\fs{M}$ modeled on $\fs{F}_1$, its tangent bundle $\mathrm{T}S$ is a $C^1$-manifold modeled on the product space $\fs{F}_1 \times \fs{F}_1$. The tangent space at any point of $\mathrm{T}S$ is therefore also modeled on $\fs{F}_1 \times \fs{F}_1$. This implies that for any submanifold chart $(U,\va)$ at $s$ for $S$, the local components of the vector $w$ must satisfy $w_{\va_*,1} \in \fs{F}_1$ and $w_{\va_*,2} \in \fs{F}_1$.
	
	Now, let $t>0$ be small enough such that $\va(s) + t v_{\va} + \tfrac{1}{2}t^2w_{\va_*,2} \in W$. Because $v_\va \in \fs{F}_1$ and $w_{\va_*,2} \in \fs{F}_1$, their linear combination also lies in $\fs{F}_1$. Thus, we have
	\begin{equation*}
		\big(\va(s) + t v_{\va} + \tfrac{1}{2}t^2w_{\va_*,2}\big) \in \va(U \cap S) = W \times \set{\zero{\fs{F}_2}}.
	\end{equation*}
	This directly implies that the limit condition is satisfied:
	\begin{equation*}
		\forall n \in \NN, \quad \lim_{t \to 0^+} t^{-2}\mathrm{d}_{\fs{F},n} \Big(\big(\va(s) + t v_{\va} + \tfrac{1}{2}t^2w_{\va_*,2} \big), \va (U \cap S)\Big)=0.
	\end{equation*}
	Therefore, $w \in \mathrm{T}^2_sS$, which completes the proof.
\end{proof}
Having established the necessary tools for studying spray-invariant sets, we now introduce a specific set that plays a crucial role. 
\begin{Defn} \label{def:admiset}
	Let \( \s \) be a spray on \(\fs{M}\), and \(S \subset \fs{M}\) a non-empty subset. A tangent vector
	\(v \in \TM{M}\) is called a 
	\((\mathrm{T}^2S,\s)\)-admissible vector if
	\[
	\tau(v) \in S \quad \text{and }\quad \s(v) \in \mathrm{T}_{\scriptscriptstyle \tau(v)}^2 S.
	\]
	The set of such vectors, denoted by  \(A_{\s, S} \), is called the 	\((\mathrm{T}^2S,\s)\)-admissible set for \(\s\) and \(S\).
\end{Defn}
By directly applying  Definition \ref{def:sec} and Lemma \ref{lem:1}, we obtain a local description of the set \(A_{\s, S} \). Let $ v \in  A_{\s, S} $. Then, there exists a chart
$ \va\colon U \to \fs{F} $ at $ \bl{v}\coloneqq \tau(v)  $ such that
\begin{equation}\label{eq:5g}
	\forall n \in \NN, \quad \lim_{t \to 0^+} t^{-2}\mathrm{d}_{\fs{F},n} \Big(\big(\va(\bl{v} ) + t v_{\va} + \tfrac{1}{2}t^2{\s_{(\va_*,2)}(v)} \big), \va (U \cap S)\Big)=0,
\end{equation}
where, in coordinates \( \varphi_*\colon\mathrm{T}U \to \fs{F} \times \fs{F} \), the spray decomposes as follows
\begin{equation}\label{eq:6}
	\s_{(\va_*)}(v) = \dd (\va_*)_{\bl{v}}\s(v) = \Big(\mathrm{Pr}_1({\s_{(\va_*)}(v)})=v_{\va}, \mathrm{Pr}_2({\s_{(\va_*)}(v)}) \eqqcolon {\s_{(\va_*,2)}(v)} \Big)  \in \fs{F}\times\fs{F}.
\end{equation}

\begin{Rem}\label{rem:1}
	Let \( \s\) and \( \overline{\s} \) be projectively equivalent sprays, i.e., \(\ol{\s} \in [\s]\). 	
	In general, the admissible sets \( A_{\s, S}  \) and \( {A}_{{\s}, \ol{\s}} \) need not coincide. From the projective relation \eqref{eq:gd4}, locally
	\begin{equation*}
		{{\s}}_{\va,2} (x,v_{\va}) 
		= {\ol{\s}}_{\va,2} (x,v_{\va}) + P(x,v_{\va})v_{\va}, \quad \text{ for } v \in \mathrm{T}_x\fs{M}.
	\end{equation*}
	Since \( \mathrm{T}_{\tau(v)}^2 S \) is generally only a closed cone \textup{(}not a linear space\textup{)}, the term \( P(x, v_\varphi)v_\varphi \) may result in \( \s(v) \notin \mathrm{T}_{\tau(v)}^2 S \) even if \( \overline{\s}(v) \in \mathrm{T}_{\tau(v)}^2 S \). Thus, \( {A}_{\s, S} \) is not preserved under projective equivalence.
	However, if \( S \) is a \( C^2 \)-submanifold, then by Lemma \ref{lem:l3} we have
	\[
	\overline{\s}(v) \in \mathrm{T}_{\tau(v)}^2 S \iff \overline{\s}(v) \in \mathrm{T}_w(\mathrm{T}S) \text{ for some } w \in \mathrm{T}_{\tau(v)}S,
	\]
	where \( \mathrm{T}_w(\mathrm{T}S) \) is a linear subspace of \( \mathrm{T}(\TM{M}) \). Since \( P(x, v_\varphi)v_\varphi \in  \mathrm{T}_{\tau(v)}S \), it follows that
	\[
	\s(v) = \overline{\s}(v) + P(x, v_\varphi)v_\varphi \in \mathrm{T}_{\tau(v)}^2 S.
	\]
	Therefore, 
	\( A_{\s, S}  \) and \( {A}_{{\s}, \ol{\s}} \) are  the same in this case.
\end{Rem}

\begin{theorem}\label{th:1}
	Let \( \s \) be a spray on \(\fs{M}\), $g\colon I \subset \RR \rightarrow \fs{M}$   its geodesic, and \(S \subset \fs{M}\) a non-empty closed subset. Then, for all \(t \) in $ I $, \(g(t) \in S\)
	if and only if \(g'(t)\in A_{\s, S} \).
\end{theorem}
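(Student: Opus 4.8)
The plan is to treat the two implications separately, since they have entirely different characters. The converse implication is immediate from the definition of the admissible set: if \(g'(t)\in A_{\s,S}\) for every \(t\in I\), then by Definition \ref{def:admiset} the base point \(\tau(g'(t))\) lies in \(S\), and since \(g'\) is the canonical lift of \(g\) we have \(\tau(g'(t))=g(t)\); hence \(g(t)\in S\) for all \(t\). Neither closedness of \(S\) nor the second-order cone condition enters here.

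For the forward implication I would fix \(t_0\in I\) and verify the two defining conditions of \(A_{\s,S}\) for \(g'(t_0)\). The first, \(\tau(g'(t_0))=g(t_0)\in S\), holds by hypothesis. For the second, namely \(\s(g'(t_0))\in\mathrm{T}^2_{g(t_0)}S\), I pass to a chart \((U,\va)\) about \(g(t_0)\) and set \(x(s):=\va(g(s))\). The decomposition \eqref{eq:6} already supplies the velocity component \(w_{\va_*,1}=v_\va\), so condition (i) of Definition \ref{def:sec} is automatic, while the geodesic equation \eqref{eq:gd2} identifies \(x'(t_0)=v_\va\) and \(x''(t_0)=\s_{(\va_*,2)}(g'(t_0))\).

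The core is then a second-order Taylor expansion of the \(C^2\)-curve \(x\) at \(t_0\), using the Taylor formula with remainder of the Michal--Bastiani calculus (Proposition I.2.3 of \cite{neeb}): in each seminorm, \(x(t_0+h)=\va(g(t_0))+h\,v_\va+\tfrac12 h^2\s_{(\va_*,2)}(g'(t_0))+r(h)\) with \(\snorm[\fs{F},n]{r(h)}=o(h^2)\). By continuity of \(g\) and the hypothesis \(g(t)\in S\) for all \(t\), we have \(g(t_0+h)\in U\cap S\) for small \(h>0\), so \(x(t_0+h)\in\va(U\cap S)\). Writing \(P(h):=\va(g(t_0))+h\,v_\va+\tfrac12 h^2\s_{(\va_*,2)}(g'(t_0))\) and bounding \(\mathrm{d}_{\fs{F},n}(P(h),\va(U\cap S))\le\snorm[\fs{F},n]{P(h)-x(t_0+h)}=\snorm[\fs{F},n]{r(h)}\), then dividing by \(h^2\) and letting \(h\to0^+\), gives exactly the limit condition \eqref{eq:5g}. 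Hence \(\s(g'(t_0))\in\mathrm{T}^2_{g(t_0)}S\) and \(g'(t_0)\in A_{\s,S}\).

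I expect the main obstacle to be twofold: first, securing the \(o(h^2)\) seminorm control of the remainder in the Fréchet \(C^k\) setting together with the fact that a geodesic, as an integral curve of the continuous vector field \(\s\), is indeed \(C^2\); second, the coordinate bookkeeping that matches the chart acceleration \(x''(t_0)\) with the second spray component \(\s_{(\va_*,2)}(g'(t_0))\) through the double-tangent transition rules \eqref{eq:2} and \eqref{eq:n2}. Chart-independence of the conclusion is already supplied by Lemma \ref{lem:1}, so working in a single chart suffices. Closedness of \(S\) is not strictly used in the argument above; its role is to render \(\{t\in I:g(t)\in S\}\) closed in \(I\) and to harmonize the statement with the spray-invariance results developed afterward.
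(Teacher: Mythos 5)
Your proposal is correct, and its substantive half coincides with the paper's own argument: for the forward implication the paper, exactly like you, bounds the pseudo-distance from the second-order Taylor polynomial to the set $\va(U\cap S)$ by the pseudo-distance to the single point $\va(g(t+s))$, which lies in $\va(U\cap S)$ because the \emph{whole} trajectory is assumed to stay in $S$, and then invokes $C^2$-regularity of the geodesic to kill the remainder; your explicit identification $x''(t_0)=\s_{(\va_*,2)}(g'(t_0))$ via \eqref{eq:gd2} and \eqref{eq:6} is the same bookkeeping the paper performs when it computes $\s_{(\va_*)}(g'(t))=\big((\va\circ g)'(t),(\va\circ g)''(t)\big)$. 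Where you genuinely diverge is the converse implication. You dispatch it by pure definition-chasing: membership $g'(t)\in A_{\s,S}$ already contains the clause $\tau(g'(t))\in S$ of Definition \ref{def:admiset}, so $g(t)\in S$ follows with no use of closedness. The paper instead ignores that clause and reconstructs $g(t)\in S$ from the limit condition \eqref{eq:7} alone: it extracts points of $\va(U\cap S)$ converging to $\va(g(t))$ and uses closedness of $S$ to conclude. Your route is shorter and shows, as you correctly observe, that closedness is not needed for the theorem as literally stated; the paper's route proves something slightly stronger, namely that for closed $S$ the chart condition \eqref{eq:5g} by itself forces base-point membership, which is the relevant fact if one regards admissibility as the second-order tangency condition rather than as the two-clause definition. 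One shared caveat, not a gap relative to the paper: both proofs test the cone condition only along $t+s$ with $s>0$, so at a right endpoint of $I$ the one-sided limit in Definition \ref{def:sec} has no points of the trajectory to compare against; the paper's proof has the identical blind spot, so this is an issue with the statement's treatment of endpoints rather than with your argument.
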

\begin{proof}
	Assume that \(t \in I \) and \(g(t) \in S\). Let $ \epsilon > 0 $ be  sufficiently
	small such that $ t+s \in I $ and \(g(t+s) \in S\) for all $ s \in (0,\epsilon] $.
	Let \(\va\colon U \to \fs{F}\) be a chart around \(g(t)\). 
	Using the properties of charts, we can express \(g'(t)\) in terms of the chart coordinates and their derivatives as follows
	\begin{align*}
		(g'(t))_{\va}&=\dd \va \big( g(t) \big)(g'(t))=
		(\va \circ g)'(t).
	\end{align*}
	Therefore, by \eqref{eq:6},  we get
	\begin{align*}
		\s_{(\va_*)}(g'(t))&=\dd (\va_*) \big( g(t)\big) (\s (g'(t))=
		\dd (\va_*) \big(g(t)\big)(g'(t))\\
		&=(\va_*({g')})'(t)\\
		&=\big( (\va \circ g)'(t),(\va \circ g)''(t)\big).
	\end{align*}
	Thus, for sufficiently small \(s\), we have  
	\begin{multline*}
		\forall n \in \NN, \quad  s^{-2}\mathrm{d}_{\fs{F},n} \Big(\big(\va ( g(t)) +
		s(g'(t))_{\va} + \tfrac{1}{2}s^2{\s_{(\va_*,2)}(g'(t))} \big), \va (U \cap S)\Big)\leq \\
		\leq s^{-2}\mathrm{d}_{\fs{F},n} \Big(\big(\va ( g(t)) +
		s \big( \va \circ g \big)'(t)  + \tfrac{1}{2}s^2 \big( \va \circ g)\big)''(t) \big), 
		\va ( g(t+s)) \Big).
	\end{multline*}
	Since \(g(t)\) is \(C^2\), the right-hand side vanishes as $ s \to 0^+ $. Therefore, \(g'(t)\in A_{\s, S}\).
	
	Now, assume that \(t \in I\) and \(g'(t) \in A_{\s, S}\). Then,
	\begin{equation}\label{eq:7}
		\forall n \in \NN, \quad \lim_{\delta \to 0^+} \delta^{-2} \mathrm{d}_{\fs{F},n} \Big( \va( g(t)) + \delta(g'(t))_{\va} + \tfrac{1}{2} \delta^2 \s_{(\va_*,2)}(g'(t)), \, \va(U \cap S) \Big) = 0,
	\end{equation}
	which characterizes the admissibility of \(g'(t)\) relative to the set \(S\).
	This condition holds if and only if there exists a family of functions \(h_n(\delta)\colon (0,\epsilon) \to \va(U \cap S)\) such that
	\[
	\forall n \in \NN, \quad \lim_{\delta \to 0^+} \delta^{-2} \mathrm{d}_{\fs{F},n} \Big( \va(g(t)) + \delta(g'(t))_{\va} + \tfrac{1}{2} \delta^2 \s_{(\va_*,2)}(g'(t)), h_n(\delta) \Big) = 0.
	\]
	Define  
	\[
	\bl{h}_n(\delta) := -\delta^2 \left( \va(g(t)) + \delta(g'(t))_{\va} + \tfrac{1}{2} \delta^2 \s_{(\va_*,2)}(g'(t)) - h_n(\delta) \right), \quad \delta \in (0, \epsilon).
	\]
	Then \(\lim_{\delta \to 0^+} \bl{h}_n(\delta) = 0\) in all seminorms. Consequently,
	\[
	\va(g(t)) + \delta (g'(t))_{\va} + \tfrac{1}{2} \delta^2 \left( \s_{(\va_*,2)}(g'(t)) + \bl{h}_n(\delta) \right) \in \va(U \cap S), \quad \text{for } \delta \in (0, \epsilon).
	\]
	Since \(\va(U \cap S)\) is closed in \(\va(U)\), taking the limit as \(\delta \to 0^+\) yields
	\[
	\va(g(t)) \in \va(U \cap S),
	\]
	and therefore \(g(t) \in S\).
\end{proof}

We can now introduce the concept of a spray-invariant set with respect to a spray.

\begin{Defn}\label{def:fis}
	Let \(\s\) be a spray on \(\fs{M}\), and let \(S\) be a subset of \(\fs{M}\) such that \(A_{\s, S}\) is not empty. We say \(S\) is spray-invariant with respect to \(\s\) if, for any geodesic \(g \colon I \to \fs{M}\) of \(\s\) such that \( 0 \in I \), \(g(0) \in S\), and \(g'(0) \in A_{\s, S}\), then \(g(t) \in S\) for all \(t \in I\).
\end{Defn}
By Theorem \ref{th:1}, a closed subset \(S \subset \fs{M}\) is spray-invariant if, for any geodesic \(g \colon I \to \fs{M}\) of \(\s\) such that \( 0 \in I \), \(g(0) \in S\), and \(g'(0) \in A_{\s, S}\), then \(g'(t) \in A_{\s, S}\) for all \(t \in I\).
\begin{Exmp}\label{ex:1}
	Let \( \fs{E} = C^\infty(\RR,\RR)\) be the \fr space of smooth real-valued functions on \(\RR\) whose topology is defined by a family of seminorms 
	\[
	\snorm[\fs{E},n]{f} \coloneqq \sup_{x \in [-j, j]} |f^{(m)}(x)|
	\]
	Here, $f^{(m)}$ denotes the $m$-th derivative of the function $f$,  $j \in \mathbb{N}$, and $m \in \{0, 1, 2, \dots\}$. This space is a \fr manifold modeled on itself with the tangent bundle \( \mathrm{T}\fs{E}\cong \fs{E} \times \fs{E} \).
	
	Consider a flat spray \( \s(f, v) = (f, v, v, \zero{\fs{E}}) \), where geodesics are affine paths \( \gamma(t) = f + t v \). Define the subset \( \mathsf{S} = \mathsf{S}_{+} \cup \mathsf{S}_{-}, \) where
	\[
	\mathsf{S}_{+} \coloneqq  \{ f \in \fs{E} \mid \supp(f) \subseteq [0, \infty) \}, \quad \mathsf{S}_{-} \coloneqq  \{ f \in \fs{E} \mid \supp(f) \subseteq (-\infty, 0] \}.
	\]
	The set $S$ is the union of two closed subspaces $\mathsf{S}_{+} $ and $\mathsf{S}_{-}$ which are smooth submanifolds of \(\fs{E}\). However, it fails to be a manifold because there exists no neighborhood of the zero function in \( \mathsf{S} \) that is locally homeomorphic to a linear subspace.
	Consider any neighborhood $N$ of the zero function in $S$. This neighborhood will contain functions from $S_+$  and functions from $S_-$. In a linear space, one can always find a continuous path between any two nearby points. However, any continuous path in $\fs{E}$ from a function in $S_+$ to a function in $S_-$ must pass through functions whose support lies on both sides of zero. Such functions are not in $S_+$ and not in $S_-$, and therefore are not in $S$. The only point connecting these two sets is the zero function itself. Consequently, if we remove the origin from the neighborhood $N$, it splits into two disconnected pieces. This local structure is not homeomorphic to a linear space.

	The adjacent cones to \( \mathsf{S} \) at non-zero points are given by
	\[
	\mathrm{T}_f \mathsf{S}=\mathrm{T}_f \mathsf{S}_{+} = \left\{ v \in \fs{E} \,\middle|\, \supp(v) \subseteq [0, \infty) \right\}, \quad \text{for } f \in \mathsf{S}_{+} \setminus \{0\},
	\]
	\[
	\mathrm{T}_f \mathsf{S}=\mathrm{T}_f \mathsf{S}_{-} = \left\{ v \in \fs{E} \,\middle|\, \supp(v) \subseteq (-\infty, 0] \right\}, \quad \text{for } f \in \mathsf{S}_{-} \setminus \{0\}.
	\]

	At the origin, we prove that the adjacent cone is the union of the two subspaces, i.e., $\mathrm{T}_0 \mathsf{S} = \mathsf{S}_{+} \cup \mathsf{S}_{-}$.

	Let $v \in \mathsf{S}_{+} \cup \mathsf{S}_{-}$.
	To check if $v \in \mathrm{T}_0 \mathsf{S}$, we must show the limit condition holds for every seminorm $\snorm[\fs{E},n]{\cdot}$.  For any $t > 0$, the function $tv$ is also in $\mathsf{S}$. We can therefore choose the point $h = tv \in \mathsf{S}$ to measure the pseudo-distance. For any $n \in \mathbb{N}$, we have
	\[
	\mathrm{d}_{\fs{E},n}\big(0 + tv, \mathsf{S}\big) = \inf_{g \in \mathsf{S}} \snorm[\fs{E},n]{tv - g} \leq \snorm[\fs{E},n]{tv - tv} = 0.
	\]
	Since the pseudo-distance is zero for every $n$, the limit condition is trivially satisfied. Thus, any $v \in \mathsf{S}_{+} \cup \mathsf{S}_{-}$ is in $\mathrm{T}_0 \mathsf{S}$.

	Now, let $v \in \fs{E}$ be a vector such that $v \notin \mathsf{S}_{+} \cup \mathsf{S}_{-}$. This means there exist points $x_1 < 0$ and $x_2 > 0$ such that $v(x_1) \neq 0$ and $v(x_2) \neq 0$. Select an integer $j \in \mathbb{N}$ large enough such that the compact interval $K_j = [-j, j]$ contains both $x_1$ and $x_2$. Let $n_0$ be the index corresponding to the pair $(j, m=0)$. The associated seminorm is $\snorm[\fs{E},n_0]{f} = \sup_{x \in [-j, j]} |f(x)|$.	The function $tv$ has non-zero values on both sides of the origin within $K_j$. Any function $g \in \mathsf{S}$ has support on only one side of the origin.
	\begin{itemize}
		\item If we choose $g \in \mathsf{S}_+$, then $g(x_1)=0$, so $\snorm[\fs{E},n_0]{tv-g} \geq |tv(x_1) - g(x_1)| = t|v(x_1)|$.
		\item If we choose $g \in \mathsf{S}_-$, then $g(x_2)=0$, so $\snorm[\fs{E},n_0]{tv-g} \geq |tv(x_2) - g(x_2)| = t|v(x_2)|$.
	\end{itemize}
	In either case, the infimum pseudo-distance is bounded below. Let $C = \min(|v(x_1)|, |v(x_2)|) > 0$. Then $\mathrm{d}_{\fs{E},n_0}(tv, \mathsf{S}) \geq C t$. The limit for this specific seminorm is therefore bounded below:
	\[
	\lim_{t \to 0^+} t^{-1}\mathrm{d}_{\fs{E},n_0}\big(tv, \mathsf{S}\big) \geq \lim_{t \to 0^+} t^{-1} (C t) = C > 0.
	\]
	Since the limit condition must hold for all $n \in \mathbb{N}$ for a vector to be in $\mathrm{T}_0 \mathsf{S}$, and we have found at least one seminorm \textup{(}indexed by $n_0$\textup{) }for which it fails, $v$ is not in $\mathrm{T}_0 \mathsf{S}$. Combining both inclusions, we have shown that $\mathrm{T}_0 \mathsf{S} = \mathsf{S}_{+} \cup \mathsf{S}_{-}$.

	For \( f \in \mathsf{S}_{+} \setminus \{0\} \) \textup{(}resp.\ {\( \mathsf{S}_{-} \setminus \{0\} \)}\textup{)}, we have
\[
\mathrm{T}_f^2 \mathsf{S} = \mathrm{T}_f \mathsf{S}_{+} \quad \text{\textup{(}resp. } \mathrm{T}_f \mathsf{S}_{-} \textup{)},
\]
since infinitesimal perturbations preserve the support condition.

\noindent
For \( f = 0 \), we have
\[
\mathrm{T}_0^2 \mathsf{S} = \mathsf{S}_{+} \cup \mathsf{S}_{-}, \quad \text{and} \quad \s(v) = \zero{\fs{E}} \in \mathrm{T}_0^2 \mathsf{S}.
\]
The flat spray \( \s \) trivially satisfies \( \s(v) \in \mathrm{T}_f^2 \mathsf{S} \), as \( \zero{\fs{E}} \in \mathrm{T}_f^2 \mathsf{S} \) for all \( f \in \mathsf{S} \). 
Thus, 
\[
A_{\s, \mathsf{S}} = \bigcup_{f \in \mathsf{S}} \left\{ (f, v) \in \mathrm{T} \fs{E} \,\middle|\, v \in \mathrm{T}_f \mathsf{S}_{+} \text{ or } v \in \mathrm{T}_f \mathsf{S}_{-} \right\}.
\]
Let \( f \in \mathsf{S} \) be a point and \( v \in A_{\s, \mathsf{S}} \) be a tangent vector at \( f \). By direct verification, for the geodesic \( \gamma\colon \mathbb{R} \to \mathsf{M} \) of \( \s \) with initial conditions \( \gamma(0) = f \) and \( \gamma'(0) = v\), we have 
\[
\gamma(t) = f + t v \in \mathsf{S}, \quad \forall t \in \mathbb{R}.
\]
Hence, \( \mathsf{S} \) is spray-invariant.
\end{Exmp}
\begin{Rem}
	As noted in Remark \ref{rem:1}, the admissible sets $A_{\s, S}$ and $A_{\overline{\s}, S}$ for projectively equivalent sprays $\s$ and $\overline{\s}$ generally differ when $S$ is not differentiable submanifold. Thus, spray invariance of S with respect to one of these sprays does not imply spray invariance with respect to the other. This implies the sensitivity of geometric properties of singular sets to the specific projective parametrization of sprays.
\end{Rem}
\begin{Exmp}
	Let $\chi_\delta$ and $\chi_\varepsilon$ be standard smooth bump functions. A function $\chi_\delta: \mathbb{R} \to \mathbb{R}$ is a $C^\infty$ function such that $\chi_\delta(x) > 0$ for $x \in (-\delta/2, \delta/2)$ and $\chi_\delta(x) = 0$ otherwise. We choose these functions to be symmetric, i.e., $\chi_\delta(x) = \chi_\delta(-x)$.
	
	Consider a tangent vector $v \in \fs{E} = C^\infty(\RR, \RR)$ defined as $v(x) = \chi_\delta(x)$ for some $0 < \delta \le \varepsilon/2$. The support of $v$ is the compact interval $[-\delta/2, \delta/2]$, which is centered at the origin. Then
	\[
	\alpha_{\epsilon}(v) = \int_{\RR} \chi_\varepsilon(x) \chi_\delta(x) \, dx = \int_{-\delta/2}^{\delta/2} \chi_\varepsilon(x) \chi_\delta(x) \, dx > 0.
	\]
Now, define the spray \( \tilde{\s} \) by
\[
\tilde{\s}(f, v) \coloneqq  (f, v, v, -2\alpha_{\epsilon}(v)\cdot v).
\]
This yields a projectively equivalent spray since it modifies the second derivative by a multiple of the adjacent tangent vector. 

Let \( f \in \mathsf{S}_+ \) \textup{(}i.e., \( \supp(f) \subseteq [0, \infty) \)\textup{)} and the initial tangent be \( \gamma'(0) = v = \chi_\delta \) with \( \delta > 0 \). The support of \( v \) is \( [-\delta/2, \delta/2] \), extending to the negative real line. At \( t = 0 \), \( \alpha(\gamma(0), \gamma'(0)) = \alpha_{\epsilon}(v) > 0 \), so \( \gamma''(0) = -2\alpha_{\epsilon}(v) v \).
The Taylor expansion of the geodesic around \( t = 0 \) is given by
\[
\gamma(t) = f + t v - t^2 \alpha_{\epsilon}(v) v + \mathcal{O}(t^3) = f + t(1 - t\alpha_{\epsilon}( v)) v + \mathcal{O}(t^3).
\]
Since \( \supp(f) \subseteq [0, \infty) \) and \( \supp(v) = [-\delta/2, \delta/2] \) with \( \delta > 0 \), for any \( t > 0 \) \textup{(}even infinitesimally small\textup{)}, the term \( tv \) will introduce a non-zero component to \( \gamma(t) \) with support on \( (-\infty, 0) \), unless \( v \) was identically zero on \( (-\infty, 0) \), which \( \chi_\delta \) is not. Therefore, \( \gamma(t) \) will leave \( \mathsf{S}_+ \), and hence \(S\), for \( t > 0 \).
Similarly, if we start with \( f \in \mathsf{S}_- \) and \( v = \chi_\delta \), the geodesic will leave \( \mathsf{S}_- \), and hence \(S\), for \( t > 0 \).
Thus, while \( \mathsf{S} = \mathsf{S}_+ \cup \mathsf{S}_- \) is invariant under the flat spray, it is not invariant under the projectively equivalent spray \( \tilde{\s} \).
\end{Exmp}
Now, using the concept of admissible sets, we can characterize totally geodesic submanifolds.  
Let $\s$ be a spray on a manifold $\fs{M}$, and let ${S} \subset \fs{M}$ be a submanifold. The submanifold ${S}$ is called \textit{totally geodesic} (with respect to $\s$) if, for all $p \in {S}$ and all $v \in \mathrm{T}_p{S}$, the geodesic $\gamma_v(t)$ in $\fs{M}$ starting at $p$ with initial velocity $v$ satisfies \( \gamma_v(t) \in {S} \) for all $t$.
For a totally geodesic submanifold  \(S\), the restriction   $\s_{S} \coloneqq \s|_{T{S}}$ is a spray on \(S\), and every geodesic of the induced spray $\s_{S}$ is also a geodesic of $\s$ on $\fs{M}$. By definition, totally geodesic submanifolds are spray-invariant.

\begin{theorem}\label{th:sub} Let \(\s\) be a spray on \(\fs{M}\),  and let \(S\) be a \(C^3\)-submanifold of \(\fs{M}\).
	Then \(S\) is totally  geodesic if and only if 	$A_{\s, S}  =\mathrm{T}S$. 
\end{theorem}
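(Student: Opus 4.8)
The plan is to prove the two implications separately, after recording one inclusion that holds for \emph{every} submanifold, namely $A_{\s, S}\subseteq\mathrm{T}S$. Indeed, if $v\in A_{\s, S}$ then $\tau(v)\in S$ and $\s(v)\in\mathrm{T}^2_{\tau(v)}S$; by Definition~\ref{def:sec} the velocity associated with the second-order adjacent vector $\s(v)$ lies in $\mathrm{T}_{\tau(v)}S$, and since $\s$ is a second-order vector field this associated velocity is $v$ itself (its first local component is $v_\va$). Hence $v\in\mathrm{T}_{\tau(v)}S$, i.e. $v\in\mathrm{T}S$. Consequently, in both directions only the reverse inclusion $\mathrm{T}S\subseteq A_{\s, S}$ is at issue, and the theorem reduces to showing that, for a $C^3$-submanifold, $\mathrm{T}S\subseteq A_{\s, S}$ holds precisely when $S$ is totally geodesic.

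For the forward implication, assume $S$ is totally geodesic and fix $v\in\mathrm{T}_pS$. Let $\gamma_v$ be the geodesic with $\gamma_v(0)=p$ and $\gamma_v'(0)=v$; by hypothesis $\gamma_v(t)\in S$ for all small $t$. Working in a submanifold chart $(U,\va)$ at $p$ and using the geodesic equation \eqref{eq:gd2}, the second-order Taylor expansion (\cite[Proposition~I.2.3]{neeb}) gives $\va(\gamma_v(t))=\va(p)+t\,v_\va+\tfrac12 t^2\,\s_{(\va_*,2)}(v)+o(t^2)$, where $(\va\circ\gamma_v)''(0)=\s_{(\va_*,2)}(v)$. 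Since $\va(\gamma_v(t))\in\va(U\cap S)$, the pseudo-distance appearing in \eqref{eq:3} is bounded above by $\snorm[\fs{F},n]{\va(\gamma_v(t))-(\va(p)+t\,v_\va+\tfrac12 t^2\,\s_{(\va_*,2)}(v))}=o(t^2)$ for every $n$; dividing by $t^2$ and letting $t\to0^+$ verifies \eqref{eq:3}. Thus $\s(v)\in\mathrm{T}^2_pS$, so $v\in A_{\s, S}$, giving $\mathrm{T}S\subseteq A_{\s, S}$ and hence $A_{\s, S}=\mathrm{T}S$.

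For the reverse implication, assume $A_{\s, S}=\mathrm{T}S$ (only $\mathrm{T}S\subseteq A_{\s, S}$ is used). By Lemma~\ref{lem:l3} this says $\s(w)\in\mathrm{T}_w(\mathrm{T}S)$ for every $w\in\mathrm{T}S$, i.e. $\s$ restricts to a vector field tangent to the submanifold $\mathrm{T}S\subset\TM{M}$. Concretely, in a submanifold chart with $\va(U\cap S)=W\times\set{\zero{\fs{F}_2}}$ and splitting $\fs{F}=\fs{F}_1\oplus\fs{F}_2$, writing $\s_{\va,2}=(\s^1_{\va,2},\s^2_{\va,2})$, the inclusion is equivalent to the vanishing of the transverse component, $\s^2_{\va,2}\big((x_1,\zero{\fs{F}_2}),(v_1,\zero{\fs{F}_2})\big)=\zero{\fs{F}_2}$ for all $x_1\in W$ and $v_1\in\fs{F}_1$. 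Now let $\gamma_v$ be any geodesic of $\s$ with $\gamma_v(0)=p\in S$ and $\gamma_v'(0)=v\in\mathrm{T}_pS$, and write $\va\circ\gamma_v=(x_1,x_2)$. The transverse coordinate satisfies $\ddot x_2=\s^2_{\va,2}\big((x_1,x_2),(\dot x_1,\dot x_2)\big)$ with $x_2(0)=\zero{\fs{F}_2}$ and $\dot x_2(0)=\zero{\fs{F}_2}$, whose right-hand side vanishes on the locus $\set{x_2=\dot x_2=\zero{\fs{F}_2}}$. The goal is to conclude $x_2\equiv\zero{\fs{F}_2}$, so that $\gamma_v$ remains in $S$; patching over charts covering $I$ (using that the closedness hypothesis of Theorem~\ref{th:1} is needed only as the relative closedness of $\va(U\cap S)$ in $\va(U)$, which holds for submanifolds) then yields $\gamma_v(t)\in S$ for all $t\in I$.

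The main obstacle is exactly this last step, forcing the transverse part to vanish. Expressing $\s^2_{\va,2}$ through Hadamard's lemma as a seminorm-continuous combination of $x_2$ and $\dot x_2$ reduces the problem to a homogeneous second-order linear system with zero initial data; in the Banach case its unique solution is $x_2\equiv\zero{\fs{F}_2}$ by Picard--Lindel\"of, so $\gamma_v$ coincides with the geodesic of the induced spray $\s_S\coloneqq\s|_{\mathrm{T}S}$ and stays in $S$. In the general \fr setting this uniqueness is delicate, since the Hadamard coefficients incur a loss of seminorm and a direct Gronwall estimate need not close; the plan is therefore to exploit the local product structure, which distinguishes the identically-zero transverse solution as the one determined by the initial data, while the fully general flow-invariance statement is deferred to the nuclear $\mc{k}$-\fr framework of Section~\ref{sec:mc}.
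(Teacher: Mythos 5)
Your preliminary inclusion $A_{\s, S}\subseteq\mathrm{T}S$ and your forward implication are correct, and they carry the same content as the paper's argument: the paper first establishes the identity $A_{\s, S}=\s^{-1}(\mathrm{T}(\mathrm{T}S))$ (Equation~\eqref{eq:lm}) via Lemma~\ref{lem:l3} and reads the forward direction off it, while you verify the cone condition \eqref{eq:3} directly by a second-order Taylor expansion of the geodesic in a submanifold chart; these are interchangeable. You have also correctly identified that the whole theorem reduces to the converse: tangency of $\s$ to $\mathrm{T}S$ must force geodesics with initial data in $\mathrm{T}S$ to stay in $S$.

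The genuine gap is exactly where you say it is, and your two proposed remedies do not close it. Concluding $x_2\equiv\zero{\fs{F}_2}$ from the transverse equation $\ddot x_2=\s^2_{\va,2}\big((x_1,x_2),(\dot x_1,\dot x_2)\big)$ with zero initial data and a right-hand side vanishing on the locus $\set{x_2=\dot x_2=\zero{\fs{F}_2}}$ is a \emph{uniqueness} assertion for an ODE in a \fr space, and uniqueness is precisely what fails there (the paper itself stresses, in the remark closing Section~1, that integral curves on \fr manifolds need be neither existent nor unique). Saying that the local product structure ``distinguishes the identically-zero transverse solution as the one determined by the initial data'' is circular --- ``determined by the initial data'' \emph{is} uniqueness; and note that even exhibiting the zero transverse solution requires solving the restricted equation $\ddot x_1=\s^1_{\va,2}((x_1,\zero{\fs{F}_2}),(\dot x_1,\zero{\fs{F}_2}))$, whose solvability is also not guaranteed. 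Deferring to Theorem~\ref{th:imp} of Section~\ref{sec:mc} does not rescue the statement either, because that theorem assumes nuclearity, $\mc{k}$-regularity, and closedness of $A_{\s, S}$, none of which appear in Theorem~\ref{th:sub}; invoking it proves a different theorem. For comparison, the paper's own proof does not supply this step: after \eqref{eq:lm} it simply asserts that $\s(v)\in\mathrm{T}(\mathrm{T}S)$ implies ``the geodesic starting at $v$ remains in $\mathrm{T}S$'' and concludes the converse ``by definition.'' So your attempt is more candid about the crux, but as written it is a complete proof only in the Banach/Hilbert setting (where Picard--Lindel\"of applies, consistent with Section~\ref{sec:hb}), not for general \fr manifolds as the theorem is stated.
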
 

\begin{proof}
	First, we prove that \( A_{\s, S} = \s^{-1}(\mathrm{T}(\mathrm{T}S)) \).  
	Suppose \( v \in A_{\s, S} \). Then \( \tau(v) \in S \) and \( \s(v) \in \mathrm{T}^2_{\scriptscriptstyle \tau(v)} S \), with associated vector \( v \in \mathrm{T}_{\scriptscriptstyle \tau(v)} S \).  
	By Lemma~\ref{lem:l3}, it follows that \( \s(v) \in \mathrm{T}_v(\mathrm{T}S) \). Since  
	\( \mathrm{T}_v(\mathrm{T}S) \subset \mathrm{T}(\mathrm{T}S) \), we conclude that \( \s(v) \in \mathrm{T}(\mathrm{T}S) \).
	
	Conversely, suppose \( \s(v) \in \mathrm{T}(\mathrm{T}S) \). Then \( \s(v) \in \mathrm{T}_v(\mathrm{T}S) \), and hence \( v \in \mathrm{T}_{\scriptscriptstyle \tau(v)} S \).  
	By Lemma~\ref{lem:l3}, this implies \( \s(v) \in \mathrm{T}^2_{\scriptscriptstyle \tau(v)} S \), and thus \( v \in A_{\s, S} \).  
	Therefore, we have
	\begin{equation}\label{eq:lm}
		A_{\s, S} = \s^{-1}(\mathrm{T}(\mathrm{T}S)).
	\end{equation}
	
	This means that \( A_{\s, S} \) consists of all vectors \( v \in \mathrm{T} \fs{M} \) such that \( \s(v) \in \mathrm{T}(\mathrm{T}S) \).  
	In particular, if \( \s(v) \in \mathrm{T}(\mathrm{T}S) \), then the geodesic starting at \( v \) remains in \( \mathrm{T}S \).
	Now assume that \( S \) is totally geodesic. Then for all \( v \in \mathrm{T}S \), the geodesic of \( \s \) starting at \( v \) remains in \( S \), so \( \s(v) \in \mathrm{T}(\mathrm{T}S) \). Hence, by \eqref{eq:lm}, \( A_{\s, S} = \mathrm{T}S \).
	Conversely, if \( A_{\s, S} = \mathrm{T}S \), then  \( S \) is totally geodesic by definition.
	
\end{proof}
\begin{Exmp}\label{ex:2}
	Let \( \mathcal{M} = C^\infty(\mathbb{R}, \mathbb{R}^2) \) be the \fr space of smooth functions from \( \mathbb{R} \) to \( \mathbb{R}^2 \), equipped with the flat spray
	\(
	\s(f, v) = (f, v, v, (0, 0))
	\),	where \( (0, 0) \) denotes the zero function in \( \mathcal{M} \).
	Consider the subset \( S \subseteq \mathcal{M} \) defined by
	\[
	S \coloneqq \{ f \in \mathcal{M} \mid f(x) = (h(x), h(x)^2) \text{ for some } h \in C^\infty(\mathbb{R}, \mathbb{R}) \}.
	\]
	Let \( E = C^\infty(\mathbb{R}, \mathbb{R}) \). Define the map 
	\[
	\Phi\colon E \to \mathcal{M}, \quad \Phi(h)(x) = \big(h(x), h(x)^2\big).
	\]
	Then \( \mathrm{Im}(\Phi) = S \). We first show that \( \Phi \) is a smooth injective immersion. Maps between \fr spaces are Michal-Bastiani smooth  if and only if they are conveniently smooth, i.e., they map smooth curves to smooth curves. 
	Let \( \gamma\colon \mathbb{R} \to E \) be a smooth curve. Then
	\[
	(\Phi \circ \gamma)(t)(x) = \big(\gamma(t)(x), \gamma(t)(x)^2\big),
	\]
	which is smooth in both \( t \) and \( x \), hence \( \Phi \circ \gamma \in C^\infty(\mathbb{R}, \mathcal{M}) \), so \( \Phi \) is smooth. Moreover, \[ \Phi(h_1) = \Phi(h_2) \Rightarrow h_1 = h_2, \] so \( \Phi \) is injective. Next, for \( u \in E \), the tangent map is given by
	\begin{align*}
		(\mathrm{T}_h \Phi)(u)(x) 
		&= \left.\frac{d}{dt}\right|_{t=0} \Phi(h + t u)(x) \\
		&= \left.\frac{d}{dt}\right|_{t=0} \big(h(x) + t u(x), (h(x) + t u(x))^2\big) \\
		&= \big(u(x),\, 2 h(x) u(x)\big).
	\end{align*}
	If \( (\mathrm{T}_h \Phi)(u) = 0 \), then \( u(x) = 0 \) for all \( x \), so \( u = 0 \). Thus, \( \mathrm{T}_h \Phi \) is injective, and \( \Phi \) is an injective immersion.

	It remains to prove that \( \Phi \) is a topological embedding onto its image \( S \), i.e., that \( \Phi\colon E \to S \) is a homeomorphism when \( S \) is endowed with the subspace topology from \( \mathcal{M} \). Consider the following diagram$\colon$
	
	\[
	\begin{tikzcd}[column sep=large, row sep=large]
		E \arrow[r, "\Phi"] \arrow[d, "\mathrm{id}_E"'] & S \arrow[d, hook, "\imath"] \arrow[ld, "\Phi^{-1}"] \\
		E & \mathcal{M} \arrow[l, "\pi_1"]
	\end{tikzcd}
	\]

	Here,
	\( \Phi \colon E \to \mathcal{S} \) is smooth and injective,
	\( \imath \colon S \hookrightarrow \mathcal{M} \) is the inclusion,
	\( \pi_1 \colon \mathcal{M} \to E \) is the projection onto the first component, \( \Phi^{-1}\coloneqq \pi_1 \circ \imath \colon S \to E \) is the inverse map,  constructed by restricting the projection map $\pi_1$ to the subset $S$.
	Next, we prove that the  composition \( \Phi^{-1} \colon S \to E \) has closed graph in \( S \times E \). Hence, by the closed graph theorem, \( \Phi^{-1} \) is continuous. Thus \( \Phi \colon E \to S \) is a homeomorphism.
	
	Let \(((f_n, f_n^2), f_n)\) be a sequence in the graph that converges in \(\mathcal{M} \times E\) to some \(((g, h), f)\).
	We must show that \((g, h) = (f, f^2)\), so that the limit point lies in the graph.
	But since \(f_n \to f\) in \(E\), and the squaring map \(E \to F\), \(f \mapsto f^2\), is continuous \textup{(}being smooth\textup{)}, we have
	\[
	f_n^2 \to f^2 \quad \text{in } F.
	\]
	Hence \((f_n, f_n^2) \to (f, f^2) = (g, h)\), so it must be that \(g = f\), \(h = f^2\).
	Therefore, the limit point is \(((f, f^2), f)\), which lies in the graph.

 Let \( f(x) = (h(x), h(x)^2) \in S \), and consider a smooth curve \( \gamma(t)(x) = (h(x, t), h(x, t)^2) \in S \) with \( h(x, 0) = h(x) \). Then
	\[
	v(x) = \gamma'(0)(x) = (\partial_t h(x, 0), 2 h(x) \partial_t h(x, 0)) = (u(x), 2 h(x) u(x)).
	\]
	Thus,
	\begin{align*}
		\gamma''(0)(x) 
		&= \left( \partial_{tt} h(x, 0), \; 2 (\partial_t h(x, 0))^2 + 2 h(x) \partial_{tt} h(x, 0) \right) \\
		&= \left( \partial_{tt} h(x, 0),\; 2 u(x)^2 + 2 h(x) \partial_{tt} h(x, 0) \right).
	\end{align*}
	The flat spray assigns acceleration \( (0, 0) \), so we must have \( \gamma''(0) = (0, 0) \). Hence, \( \partial_{tt} h(x, 0) = 0 \) and \( u(x)^2 = 0 \), so \( u = 0 \). Therefore, the only vector \( v \) for which \( (f, v, v, (0, 0)) \in \mathrm{T}^2 S \) is \( v = 0 \).
	Thus, 
	\[
	A_{\s, S} = \{ (f, 0) \in \mathrm{T}\mathcal{M} \mid f(x) = (h(x), h(x)^2),\ h \in E \},
	\]
	while the tangent bundle is given by
	\[
	\mathrm{T}S = \{ (f, v) \in \mathrm{T}\mathcal{M} \mid f(x) = (h(x), h(x)^2),\ v(x) = (u(x), 2 h(x) u(x)) \text{ for some } u \in E \}.
	\]
	If \( (f, v) \in A_{\s, S} \), then \( v = 0 \), and the geodesic \( \gamma(t) = f + tv = f \) remains in \( S \). Thus, \( S \) is spray-invariant.
	The tangent bundle \( \mathrm{T}S \) contains non-zero vectors \( v(x) = (h'(x), 2 h(x) h'(x)) \) for non-constant \( h \). Since \( A_{\s, S} \) only contains pairs with \( v = 0 \), we have \( \mathrm{T}S \neq A_{\s, S} \). By Theorem \ref{th:sub}, \( S \) is not totally geodesic.
\end{Exmp}

\begin{cor}\label{cor:1}
	Let \( \fs{M} \) be a manifold equipped with a spray. Assume further that for any two distinct points in \( \fs{M} \), there is unique geodesic connecting them. Let \( S \subset \fs{M} \) be a closed \(C^3\)-submanifold.  Suppose that locally, given two  distinct points in \( S \), the unique geodesic segment in \( \fs{M} \) connecting them lies entirely in \( S \). Then \( S \) is a totally geodesic submanifold of \( \fs{M} \).
\end{cor}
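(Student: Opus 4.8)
The plan is to verify the hypotheses of Theorem~\ref{th:sub}, reducing the corollary to the identity $A_{\s, S} = \mathrm{T}S$. One inclusion is automatic: if $v \in A_{\s, S}$, then by Definition~\ref{def:sec} the acceleration $\s(v)$ has associated velocity $v$, which therefore lies in the adjacent cone $\mathrm{T}_{\tau(v)}S$; since $S$ is a $C^3$-submanifold this cone equals the tangent space, so $A_{\s,S} \subseteq \mathrm{T}S$. Thus it remains to prove $\mathrm{T}S \subseteq A_{\s,S}$, i.e.\ that every $v \in \mathrm{T}_pS$ is $(\mathrm{T}^2 S, \s)$-admissible. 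Before constructing such vectors, I would record two structural facts. First, $A_{\s,S}$ is a \emph{cone}: replacing $v$ by $rv$ ($r>0$) in the local criterion \eqref{eq:5g} and using the quadratic homogeneity \eqref{eq:123}, so that $\s_{(\va_*,2)}(rv) = r^2\s_{(\va_*,2)}(v)$, the substitution $t \mapsto t/r$ converts the admissibility limit for $rv$ into $r^2$ times that for $v$, which still vanishes. Second, $A_{\s,S}$ is \emph{closed}: by the identity \eqref{eq:lm} established inside the proof of Theorem~\ref{th:sub} we have $A_{\s,S} = \s^{-1}(\mathrm{T}(\mathrm{T}S))$, and since $S$ is a closed $C^3$-submanifold the iterated tangent set $\mathrm{T}(\mathrm{T}S)$ is closed in $\mathrm{T}(\mathrm{T}\fs{M})$ while $\s$ is continuous.

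Next I would exploit local geodesic convexity to manufacture admissible vectors near $v$. Fix $p \in S$ and $v \in \mathrm{T}_pS$, and choose a $C^3$-curve $c\colon(-\epsilon,\epsilon)\to S$ with $c(0)=p$ and $c'(0)=v$, which exists because $v$ is tangent to the submanifold. For each small $s>0$ the points $p$ and $c(s)$ are distinct and close, so by hypothesis they are joined by a unique geodesic $g_s\colon[0,1]\to\fs{M}$, and local geodesic convexity forces $g_s([0,1])\subseteq S$. Applying the forward implication of Theorem~\ref{th:1} at parameter $0$ — which only uses $g_s(0)=p\in S$ together with $g_s(\sigma)\in S$ for small $\sigma>0$ — yields $w_s := g_s'(0) \in A_{\s,S}$. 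Because $A_{\s,S}$ is a cone, the rescaled vectors $u_s := s^{-1}w_s$ also lie in $A_{\s,S}$.

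The concluding step is to show $u_s \to v$ as $s\to0^+$ and then invoke closedness. Reparametrising $G_s(t):=g_s(t/s)$ produces a geodesic on $[0,s]$ with $G_s'(0)=u_s$ and $G_s(s)=c(s)$; integrating the geodesic equations \eqref{eq:gd2} twice in a chart $(U,\va)$ around $p$ gives $(u_s)_\va = s^{-1}\big(\va(c(s))-\va(p)\big) + O(s)$, where the error term collects the contribution of $\s_{\va,2}$ along the trajectory $G_s$. Letting $s\to0^+$, the difference quotient tends to $(\va\circ c)'(0)=v_\va$ and the error tends to $0$, so $u_s\to v$. Since $A_{\s,S}$ is closed and each $u_s\in A_{\s,S}$, we conclude $v\in A_{\s,S}$. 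This establishes $\mathrm{T}S\subseteq A_{\s,S}$, hence $A_{\s,S}=\mathrm{T}S$, and Theorem~\ref{th:sub} gives that $S$ is totally geodesic.

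The main obstacle is the convergence $u_s\to v$ in the \fr topology. With no norm available and no automatic continuous dependence of geodesics on their endpoints, the $O(s)$ remainder must be controlled seminorm by seminorm, uniformly in $s$, by bounding $\s_{\va,2}$ on the shrinking trajectories $G_s$ near $(p,v)$; this is a bootstrap argument resting on continuity of the spray and on the assumed existence and uniqueness of connecting geodesics. By comparison, the cone property and the closedness of $A_{\s,S}$ (via \eqref{eq:lm}) are routine, and the global reduction through Theorem~\ref{th:sub} is immediate once $\mathrm{T}S\subseteq A_{\s,S}$ is in hand.
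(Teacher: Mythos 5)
Your global reduction --- prove $A_{\s, S}=\mathrm{T}S$ and invoke Theorem~\ref{th:sub} --- is the same as the paper's, and your inclusion $A_{\s, S}\subseteq \mathrm{T}S$ is in fact cleaner than the paper's corresponding step: by \eqref{eq:6} and condition (i) of Definition~\ref{def:sec}, the associated velocity of $\s(v)$ is $v$ itself, so admissibility forces $v$ into the adjacent cone, which equals the tangent space for a $C^3$-submanifold. The cone property of $A_{\s,S}$ and its closedness via \eqref{eq:lm} are also fine. The genuine gap is the step you yourself call the main obstacle: the convergence $u_s\to v$. In your approach this is not a removable technicality --- it \emph{is} the inclusion $\mathrm{T}S\subseteq A_{\s,S}$ --- and it does not follow from the stated hypotheses. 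Integrating \eqref{eq:gd2} in a chart gives
\[
(u_s)_{\va} \;=\; s^{-1}\bigl(\va(c(s))-\va(p)\bigr)\;-\;s^{-1}\int_0^{s}(s-\sigma)\,\s_{\va,2}\bigl(G_s(\sigma),G_s'(\sigma)\bigr)\,\mathrm{d}\sigma ,
\]
and making the second term $o(1)$ requires seminorm bounds on $\s_{\va,2}(G_s(\sigma),G_s'(\sigma))$ uniform in $s$ and $\sigma$; such bounds presuppose control of the velocities $G_s'$, which is exactly the quantity whose limit is in question, so the bootstrap is circular. The hypotheses grant only existence and uniqueness of connecting geodesics, not continuous dependence of the chord $g_s$ on its endpoints; in the \fr setting there is no exponential map, no inverse function theorem, and (as the paper itself remarks) not even a guaranteed well-posed initial value problem to supply such dependence. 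You cannot even justify that $g_s$ stays in the chart $U$ for small $s$, or that $g_s'(0)\to 0$, let alone the rate $g_s'(0)=sv+o(s)$: uniqueness prevents multiple chords, but it does not prevent the single chord from $p$ to $c(s)$ from being long or from varying wildly with $s$.

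By contrast, the paper avoids any limiting argument: it applies the local-convexity hypothesis directly to the geodesic $\gamma_v$ emanating from $p$ with $\gamma_v'(0)=v\in\mathrm{T}_pS$, taking as second point $q=\gamma_v(t_0)$ on that same geodesic, so that $\gamma_v|_{[0,t_0]}$ lies in $S$, and then Theorem~\ref{th:1} yields $v=\gamma_v'(0)\in A_{\s,S}$ at once --- no cone structure, closedness, or limits are needed. (Your chord construction is more scrupulous in that it applies the hypothesis only to pairs of points known to lie in $S$, whereas the paper's second point $q$ is not a priori in $S$; but that very caution is what forces the limiting argument, and that argument cannot be closed in this generality. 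To rescue your route one would need an additional hypothesis of continuous dependence of connecting geodesics on their endpoints, which holds on Banach manifolds via the exponential map but is unavailable on general \fr manifolds.)
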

\begin{proof} 
	Let \( p \in S \) and \( v \in \mathrm{T}_pS \). By the hypothesis of the corollary, there exists \( \epsilon > 0 \) such that the unique geodesic  \( \gamma_v\colon (-\epsilon, \epsilon) \to \fs{M} \) with \( \gamma_v(0) = p \) and \( \gamma_v'(0) = v \) is defined. For \( t_0 \in (0, \epsilon) \), let \( q = \gamma_v(t_0) \). Moreover, by the corollary’s hypothesis, the unique geodesic segment \( \gamma_v|_{[0, t_0]} \) connecting \( p \) and \( q \) lies entirely in \( S \).
	
	 By Theorem~\ref{th:1}, since \( \gamma_v(t) \in S \) for all \( t \in [0, t_0] \), we have \( \gamma_v'(t) \in A_{\s, S} \) for all \( t \in [0, t_0] \). In particular, at \( t = 0 \), we have \( v = \gamma_v'(0) \in A_{\s, S} \). Hence, \( \mathrm{T}_pS \subseteq A_{\s, S} \).
	
	Conversely, suppose \( v \in A_{\s, S} \). Let \( \tau(v) = p \in S \). Consider the geodesic \( \gamma_v(t) \) starting at \( p \) with initial tangent \( v \). Since \( v \in A_{\s, S} \), by Theorem~\ref{th:1}, for all \( t \) in the domain of the geodesic where it is defined, we have \( \gamma_v'(t) \in A_{\s, S} \).
	
	Now, let \( q \) be another point in \( S \) such that there is a geodesic  \( \gamma_v \) connecting \( p \) to \( q \), with \( \gamma_v(0) = p \) and \( \gamma_v'(0) = v \). By the local property given in the corollary, this geodesic  lies entirely within \( S \). Since \( \gamma_v(t) \) stays in \( S \), its tangent vector \( \gamma_v'(t) \) must lie in \( \mathrm{T}_{\gamma_v(t)} S \) for all \( t \) 	in its domain.  In particular, at \( t = 0 \), we have \( v = \gamma'(0) \in A_{\s, S} \). Also, since \( \gamma'(0) = v \) and \( \gamma(0) = p \in S \), the initial velocity \( v \) is tangent to \( S \) at \( p \), so \( v \in \mathrm{T}_pS \). This shows that \( A_{\s, S} \subseteq \mathrm{T}S \).
	Therefore, \( A_{\s, S} = \mathrm{T}S \). Thus, by Theorem \ref{th:sub}, \( S \) is a totally geodesic submanifold.
\end{proof}
This result was proven for Banach manifolds using a different technique in  \cite[XI, \S 4, Proposition 4.2]{lang}.
\begin{Exmp}\label{ex:crit}
	Let \( \mathcal{M} = C^\infty(\mathbb{R}^n, \mathbb{R}) \) be the \fr space of smooth real-valued functions on \( \mathbb{R}^n \). The tangent bundle is \( \mathrm{T}\mathcal{M} \cong \mathcal{M} \times \mathcal{M} \). Consider the flat spray
	\(
	\s(f, v) = (f, v, v, \zero{\mathcal{M}}),
	\)
	where \( f, v \in \mathcal{M} \) and \( \zero{\mathcal{M}} \) denotes the zero function. The geodesics are given by
	\(
	\gamma(t) = f + t v.
	\)
	
	Define the subset \( S \subset \mathcal{M} \) as the set of functions that are constant on \( \mathbb{R}^n \), i.e., 
	\[
	S \coloneqq  \left\{ f \in C^\infty(\mathbb{R}^n, \mathbb{R}) \,\middle|\, \exists\, c \in \mathbb{R} \text{ such that } f(x) = c, \ \forall x \in \mathbb{R}^n \right\}.
	\]
	For any two distinct functions \( f_1, f_2 \in \mathcal{M} \), the unique geodesic passing through them is
	\[
	\gamma(t) = f_1 + t(f_2 - f_1).
	\]
	Let \( f_1, f_2 \in S \) be two constant functions, say \( f_1(x) = c_1 \) and \( f_2(x) = c_2 \) with \( c_1 \neq c_2 \). Then for any \( t \in [0, 1] \), the geodesic satisfies
	\[
	\gamma(t)(x) = c_1 + t(c_2 - c_1) = (1 - t)c_1 + t c_2.
	\]
	Notice that for a fixed \( t \), the expression \( (1 - t)c_1 + tc_2 \) yields a single real number that does not depend on \( x \). This means that the function \( \gamma(t) \) takes the same constant value at every point \( x \in \mathbb{R}^n \). Therefore, by the definition of \( S \) as the set of constant functions, \( \gamma(t) \in S \) for all \( t \in [0, 1] \). Thus, the geodesic segment connecting any two points in \( S \) lies entirely in \( S \).
	
	The set \( S \) can be identified with \( \mathbb{R} \) via the constant value. It is a closed linear subspace of \( \mathcal{M} \), and thus a closed \( C^\infty \)-submanifold of \( \mathcal{M} \).
	Since all the conditions of Corollary \ref{cor:1} are satisfied, \( S \)  is a totally geodesic submanifold.
\end{Exmp}

\begin{Rem}
	The local existence of a unique geodesic  in the third condition of Corollary \ref{cor:1}  is crucial for more general manifolds where geodesics might not be straight lines globally.
	In our specific example of constant functions, this local condition happens to hold globally because the geodesics in $\fs{M}$ are straight lines, and any straight line connecting two constant functions consists entirely of constant functions. However, for a general \fr manifold and a submanifold, this containment might only hold for points that are sufficiently close to each other within $S$.
\end{Rem}
\subsection{Automorphisms Preserving Spray Invariance}\label{subsec:1}
In this subsection we study a class of automorphisms of \(\fs{M}\) that preserve spray-invariance.
\begin{lem}
	Let \( \s \) be a spray, and let \( \phi\) be a \(C^{k}\)-automorphism of \(\fs{M}\). Then,  
	\( \phi_{**} \circ \s \circ \phi_{*}^{-1} \) is also a spray.
\end{lem}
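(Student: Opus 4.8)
The plan is to reduce almost everything to Lemma~\ref{lem:4} and to isolate the single genuinely new check, namely the homogeneity condition \eqref{eq:sp1}. Write $\tilde{\s} := \phi_{**}\circ\s\circ\phi_*^{-1}$. Since a spray is by definition a symmetric second-order vector field satisfying \eqref{eq:sp1}, and since $\s$ is of class $C^r$ with $1\le r\le k-2$ while $\phi$ is $C^k$ and hence $C^{r+2}$ (because $r+2\le k$), Lemma~\ref{lem:4} applies verbatim and shows that $\tilde{\s}$ is already a $C^r$-symmetric second-order vector field. Thus the only thing left to prove is that $\tilde{\s}$ obeys \eqref{eq:sp1}.

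First I would record two commutation relations with the scaling operators. The map $\phi_*\colon\TM{M}\to\TM{M}$ is fiberwise linear (on each fiber it is $\dd\phi_p$), so it commutes with the fiber scaling $L_{\TM{M}}\colon v\mapsto sv$, i.e.\ $\phi_*\circ L_{\TM{M}}=L_{\TM{M}}\circ\phi_*$, and the same holds for $\phi_*^{-1}=(\phi^{-1})_*$. Applying the tangent functor to this identity and using its functoriality yields
\[
\phi_{**}\circ (L_{\TM{M}})_* = (L_{\TM{M}})_*\circ \phi_{**}.
\]
Separately, $\phi_{**}$ is itself a tangent map, hence fiberwise linear with respect to $\tau_2$, so it commutes with the fiber scaling $L_{\TTM{M}}$ on the double tangent bundle. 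This is the same principle as the relation $(L_{\TM{M}})_*\circ L_{\TTM{M}} = L_{\TTM{M}}\circ (L_{\TM{M}})_*$ noted just before \eqref{eq:sp1}.

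With these in hand the verification is a short chain. Setting $w=\phi_*^{-1}(v)$ and using fiberwise linearity of $\phi_*^{-1}$ to pull the scalar out ($\phi_*^{-1}(sv)=sw$), then \eqref{eq:sp1} for $\s$, and finally sliding $\phi_{**}$ past $(L_{\TM{M}})_*$ and past the $\tau_2$-scaling via the two relations above, one obtains
\[
\tilde{\s}(sv)
= \phi_{**}\bigl((L_{\TM{M}})_*(s\,\s(w))\bigr)
= (L_{\TM{M}})_*\bigl(s\,\phi_{**}(\s(w))\bigr)
= (L_{\TM{M}})_*\bigl(s\,\tilde{\s}(v)\bigr),
\]
which is exactly condition \eqref{eq:sp1} for $\tilde{\s}$. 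Combined with the first paragraph, this shows $\tilde{\s}$ is a spray.

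I expect the only real subtlety to be bookkeeping: keeping straight the three distinct scaling operators---$L_{\TM{M}}$ on $\TM{M}$, and both $(L_{\TM{M}})_*$ and $L_{\TTM{M}}$ on $\TTM{M}$---and invoking each commutation on the correct bundle, since \eqref{eq:sp1} mixes the tangent-of-scaling $(L_{\TM{M}})_*$ with the $\tau_2$-fiber scaling hidden in the factor $s\,\s(v)$. If a fully explicit argument is preferred over the functorial one, I would instead re-derive the two commutations in a chart directly from the change-of-coordinates rule \eqref{eq:2}; everything else in the proof is formal.
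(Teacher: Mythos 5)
Your proposal is correct and takes essentially the same route as the paper: first invoke Lemma~\ref{lem:4} to conclude that $\tilde{\s}=\phi_{**}\circ\s\circ\phi_{*}^{-1}$ is a symmetric second-order vector field, then verify \eqref{eq:sp1} by the identical chain of equalities (fiberwise linearity of $\phi_{*}^{-1}$, the spray condition for $\s$, commutation $\phi_{**}\circ(L_{\TM{M}})_*=(L_{\TM{M}})_*\circ\phi_{**}$, and pulling the scalar $s$ through $\phi_{**}$). Your only deviation is cosmetic but welcome: deriving the commutation via functoriality of the tangent functor from $\phi_*\circ L_{\TM{M}}=L_{\TM{M}}\circ\phi_*$, and cleanly separating it from the $\tau_2$-fiberwise linearity used to extract $s$, is a sharper justification than the paper's single appeal to linearity of $\phi_{**}$ on the fibers of $\TTM{M}$.
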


\begin{proof}
	Lemma \ref{lem:4} implies that	\( \tilde{\s} = \phi_{**} \circ \s \circ \phi_{*}^{-1} \) is a \(C^{k-2}\)-symmetric second-order vector filed.
	We now need to show that 
	\( \tilde{\s} \) satisfies the spray condition, i.e.,
	\(
	\tilde{\s}(sv) = (L_{\TM{M}})_*(s \tilde{\s}(v)),
	\)
	for all \( s \in \mathbb{R} \) and \( v \in \TM{M} \). Here, $(L_{\TM{M}})_*$ denotes the pushforward of the scalar multiplication map on the tangent bundle $\TM{M}$.
	
	By definition of \( \tilde{S} \), we have
	\(
	\tilde{\s}(sv) = \phi_{**} \circ \s \circ \phi_{*}^{-1}(sv)
	\).	Since \( \phi_{*}^{-1} \) is linear on each fiber (as it is the inverse of the tangent map \( \phi_* \)), we have
	\(
	\phi_{*}^{-1}(sv) = s \phi_{*}^{-1}(v).
	\)
	Substituting this into the expression for \(\tilde{\s}(sv)\), we get
	\[
	\tilde{\s}(sv) = \phi_{**} \circ \s(s \phi_{*}^{-1}(v)).
	\]
	Since \( \s \) is a spray, it satisfies
	\(
	\s(s \phi_{*}^{-1}(v)) = (L_{\TM{M}})_*(s \s(\phi_{*}^{-1}(v))).
	\)
	Substituting this into the expression for \( \tilde{\s}(sv) \), we obtain
	\[
	\tilde{\s}(sv) = \phi_{**} \circ (L_{\TM{M}})_*(s \s(\phi_{*}^{-1}(v))).
	\]
	The pushforward \( \phi_{**} \) commutes with scalar multiplication maps. This is due to the fact that
	\( \phi_{**} \) is linear on each fiber of \(\TTM{M}\). Thus,
	\(
	\phi_{**} \circ (L_{\TM{M}})_* = (L_{\TM{M}})_* \circ \phi_{**}.
	\)
	Applying this, we have
	\[
	\tilde{\s}(sv) = (L_{\TM{M}})_* \circ \phi_{**}(s \s(\phi_{*}^{-1}(v))).
	\]
	Since \( \phi_{**} \) is linear on each fiber,  we can pull out the scalar \(s\), i.e.,
	\[
	\phi_{**}(s \s(\phi_{*}^{-1}(v))) = s \phi_{**} \circ \s \circ \phi_{*}^{-1}(v) = s \tilde{\s}(v).
	\]
	Therefore,
	\(
	\tilde{\s}(sv) = (L_{\TM{M}})_*(s \tilde{\s}(v))
	\).	Thus, \( \tilde{S}  \) satisfies the spray condition. 
\end{proof}
A \(C^{k}\)-automorphism \(\phi\) of \(\fs{M}\) is called an {\it automorphism of the spray} \(\s\) if
\( \phi_{**} \circ \s \circ \phi_{*}^{-1} =\s\). The automorphisms of \(\s\) form a
group under composition called the automorphism group of \(\s\) and denoted
by \(\Aut(\fs{M},\s\)). For finite-dimensional manifolds this concept was introduced in \cite{spf}.
\begin{theorem}\label{th:aut}
	Let \(S \subset \fs{M}\) be a non-empty closed subset that is spray-invariant with respect to \(\s\), and let \(\phi \in \Aut(\fs{M},\s)\). Then \(\phi(S)\) is spray-invariant with respect to \(\s\).
\end{theorem}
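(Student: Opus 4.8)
The plan is to exploit two facts and combine them with the hypothesis on $S$: a spray automorphism carries geodesics of $\s$ to geodesics of $\s$, and both the bundle projection $\tau$ and the second-order adjacent cone are \emph{natural} with respect to $\phi$, so that $A_{\s, \phi(S)} = \phi_*(A_{\s, S})$. Given these, invariance transports across $\phi$ almost formally. I would also note at the outset that $\phi(S)$ is closed, since $\phi$ is a homeomorphism and $S$ is closed, so that Theorem \ref{th:1} applies to $\phi(S)$ as well.

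First I would verify that $\phi$ maps geodesics to geodesics. If $g\colon I \to \fs{M}$ is a geodesic of $\s$, then its canonical lift obeys $(\phi \circ g)' = \phi_* \circ g'$ and hence $(\phi \circ g)'' = \phi_{**} \circ g''$. Using the geodesic equation $g'' = \s \circ g'$, the relation $g' = \phi_*^{-1} \circ (\phi \circ g)'$, and the defining identity $\phi_{**} \circ \s \circ \phi_*^{-1} = \s$, I obtain $(\phi \circ g)'' = (\phi_{**} \circ \s \circ \phi_*^{-1}) \circ (\phi \circ g)' = \s \circ (\phi \circ g)'$, so $\phi \circ g$ is a geodesic. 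Because $\Aut(\fs{M}, \s)$ is a group, $\phi^{-1}$ has the same property.

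Next comes the central step, $A_{\s, \phi(S)} = \phi_*(A_{\s, S})$, which reduces to the naturality $\phi_{**}(\mathrm{T}_s^2 S) = \mathrm{T}_{\phi(s)}^2 \phi(S)$ of the second-order adjacent cone. The base-point condition is handled by $\tau \circ \phi_* = \phi \circ \tau$, and rewriting the automorphism relation as $\s \circ \phi_* = \phi_{**} \circ \s$ converts $\s(\phi_*(v))$ into $\phi_{**}(\s(v))$. The naturality of the cone is where the real work lies. Given a chart $(U, \va)$ at $s$ witnessing the limit condition of Definition \ref{def:sec} for some $w \in \mathrm{T}_s^2 S$ with associated $v$, I would pass to the chart $\psi \coloneqq \va \circ \phi^{-1}$ at $\phi(s)$. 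Since $\psi \circ \phi = \va$, differentiating gives $\psi_* \circ \phi_* = \va_*$, and a further differentiation gives $\mathrm{T}(\psi_*) \circ \phi_{**} = \mathrm{T}(\va_*)$; hence the full coordinate representation of $\phi_{**}(w)$ relative to $\psi_*$ coincides with that of $w$ relative to $\va_*$. In particular $(\phi_* v)_\psi = v_\va$ and $(\phi_{**} w)_{\psi_*,2} = w_{\va_*,2}$, while $\psi(\phi(s)) = \va(s)$ and $\psi(\phi(U) \cap \phi(S)) = \va(U \cap S)$. Thus the limit condition defining $\mathrm{T}_{\phi(s)}^2 \phi(S)$ in the chart $\psi$ is term-for-term identical to the one holding for $w$ in $(U, \va)$, and chart-independence (Lemma \ref{lem:1}) yields $\phi_{**}(w) \in \mathrm{T}_{\phi(s)}^2 \phi(S)$ with associated vector $\phi_*(v)$; applying the same argument to $\phi^{-1}$ gives the reverse inclusion. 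I expect this naturality to be the main obstacle, but the choice $\psi = \va \circ \phi^{-1}$ collapses it to the already-proven chart-independence.

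Finally I would assemble the pieces. Let $g\colon I \to \fs{M}$ be a geodesic of $\s$ with $0 \in I$, $g(0) \in \phi(S)$, and $g'(0) \in A_{\s, \phi(S)}$. Set $h \coloneqq \phi^{-1} \circ g$; by the first step $h$ is a geodesic of $\s$, and $h(0) = \phi^{-1}(g(0)) \in S$. By the naturality $A_{\s, \phi(S)} = \phi_*(A_{\s, S})$, the vector $h'(0) = \phi_*^{-1}(g'(0))$ lies in $A_{\s, S}$. Since $S$ is spray-invariant, $h(t) \in S$ for all $t \in I$, whence $g(t) = \phi(h(t)) \in \phi(S)$ for all $t \in I$. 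Therefore $\phi(S)$ is spray-invariant with respect to $\s$.
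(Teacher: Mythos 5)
Your proposal is correct, and its skeleton --- transporting admissible data and geodesics through $\phi$ --- is the same as the paper's; however, your execution differs in two substantive ways, both to your advantage. First, the key identity $A_{\s, \phi(S)} = \phi_*(A_{\s, S})$: the paper justifies the corresponding step by writing $\phi_{*}\colon \TM{S} \to \TM{\phi(S)}$ and $\phi_{**}\colon \TTM{S} \to \TTM{\phi(S)}$, notation that is only heuristic when $S$ is a general closed set rather than a submanifold, whereas you actually prove naturality of the second-order adjacent cone via the chart $\psi = \va \circ \phi^{-1}$, under which every ingredient of Definition~\ref{def:sec} --- the base point $\psi(\phi(s)) = \va(s)$, the local components $(\phi_* v)_\psi = v_\va$ and $(\phi_{**} w)_{\psi_*,2} = w_{\va_*,2}$, and the set $\psi(\phi(U) \cap \phi(S)) = \va(U \cap S)$ --- coincides verbatim with its counterpart for $w$ in $(U, \va)$; this is the right way to make the paper's assertion rigorous for non-smooth $S$. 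Second, the direction of transport of geodesics: the paper starts from $v \in A_{\s,S}$, takes \emph{the} geodesic through $(q, v)$, which stays in $S$, and pushes it forward to $\phi(S)$. Since geodesics on \fr manifolds need not be unique (as the paper itself remarks), that argument literally exhibits only \emph{one} geodesic through each admissible initial condition that stays in $\phi(S)$, while Definition~\ref{def:fis} requires that \emph{every} such geodesic stay in $\phi(S)$. Your pull-back $h = \phi^{-1} \circ g$ of an arbitrary geodesic $g$, together with your explicit verification that spray automorphisms carry geodesics to geodesics (a fact the paper uses implicitly), quantifies over all geodesics and closes this gap. So your proof is not merely correct; it is strictly more careful than the paper's own.
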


\begin{proof}
	Let \(\tilde{p} \in \phi(S)\). Then \(\tilde{p} = \phi(q)\) for some \(q \in S\). Let \(\tilde{v} \in A_{\s, \phi(S)}\) such that \(\tau(\tilde{v}) = \tilde{p}\). Let \(v = \phi_*^{-1}(\tilde{v}) \in \mathrm{T}_q \fs{M}\). Since \(\tau(\tilde{v}) = \phi(q)\), we have 
	\[\tau(v) = \phi^{-1}(\tau(\tilde{v})) = \phi^{-1}(\phi(q)) = q \in S.\]
	We know that \(\tilde{v} \in A_{\s, \phi(S)}\) implies \(\s(\tilde{v}) \in \mathrm{T}^2_{\tilde{p}}\phi(S)\). Using the automorphism property \(\s \circ \phi_* = \phi_{**} \circ \s\), 
	we obtain
	\(
	\s(\tilde{v}) = \s(\phi_{*}(v)) = \phi_{**}(\s(v)).
	\)
	Now, since \( \phi \) maps \( S \) into \( \phi(S) \), its tangent maps satisfy
	\[
	\phi_{*}\colon \TM{S} \to \TM{\phi(S)} \quad \text{and} \quad \phi_{**}\colon \TTM{S} \to \TTM{\phi(S)}.
	\] If \(\s(\tilde{v}) = \phi_{**}(\s(v))\) is tangent to \(\mathrm{T}^2 \phi(S)\) at \(\tilde{p}\), then \(\s(v)\) must be tangent to \(\mathrm{T}^2 S\) at \(q\). Thus, \(v \in A_{\s, S}\).	
	Since \(S\) is spray-invariant and \(v \in A_{\s, S}\), the geodesic \(g\) with \(g(0) = q\) and \(g'(0) = v\) stays in \(S\), i.e., \(g(t) \in S\) for all \(t\) in its domain.
	Now consider the geodesic \(\tilde{g}(t) = \phi(g(t))\). Then
	\[
	\tilde{g}(0) = \phi(g(0)) = \phi(q) = \tilde{p}, \quad \tilde{g}'(0) = \phi_{*}(g'(0)) = \phi_{*}(v) = \tilde{v}.
	\]
	Since \( g(t) \in S \), it follows that \( \tilde{g}(t) = \phi(g(t)) \in \phi(S) \) for all \( t \).
	Hence, the geodesic \( \tilde{g} \) remains in \( \phi(S) \), and therefore \( \phi(S) \) is spray-invariant with respect to \( \s \).
\end{proof}
The \emph{orbit} of a subset \( S \subset \fs{M} \) under the action of \( \Aut(\fs{M}, \s) \) is the set
\[
\mathcal{O}(S) = \{ \phi(S) \mid \phi \in \Aut(\fs{M}, \s) \}.
\]
By Theorem~\ref{th:aut}, each \( \phi(S) \in \mathcal{O}(S) \) is spray-invariant, since automorphisms of \( \s \) preserve the spray structure. Hence, the entire orbit \( \mathcal{O}(S) \) consists of spray-invariant subsets.
\begin{Exmp}\label{ex:9}
	In Example \ref{ex:1}, we showed that for the \fr space 
	\( \fs{E} = C^\infty(\mathbb{R}, \mathbb{R}) \), equipped with the flat spray,
	the set \( \mathsf{S} = \mathsf{S}_{+} \cup \mathsf{S}_{-} \), where
	\[
	\mathsf{S}_{+} \coloneqq  \{ f \in \fs{E} \mid \supp(f) \subseteq [0, \infty) \}, \qquad 
	\mathsf{S}_{-} \coloneqq  \{ f \in \fs{E} \mid \supp(f) \subseteq (-\infty, 0] \}.
	\]
	is a singular spray-invariant. 
	For a fixed \( a \in \mathbb{R} \), \( a \ne 0 \), define the translation map 
	\[
	\phi_a \colon \fs{E} \to \fs{E}, \quad \phi_a(f)(x) = f(x - a),
	\]  
	The induced tangent map \( (\phi_a)_* \) acts on tangent vectors \( v \in \mathrm{T}_f\fs{E} \) as
	\(
	(\phi_a)_*(v)(x) = v(x - a),
	\)
	and similarly for the second tangent map \( (\phi_a)_{**} \). We need to verify
	\(
	(\phi_a)_{**} \circ \s = \s \circ (\phi_a)_*.
	\)
	Indeed,
	\begin{align*}
		(\phi_a)_{**}(\s(f, v)) 
		&= (\phi_a)_{**}(f, v, v, 0) \\
		&= (\phi_a(f), (\phi_a)_*(v), (\phi_a)_*(v), (\phi_a)_*(0)) \\
		&= (f(x - a), v(x - a), v(x - a), 0) \\
		&= \s(f(x - a), v(x - a)) \\
		&= \s(\phi_a(f), (\phi_a)_*(v)) \\
		&= \s((\phi_a)_*(f, v)). 
	\end{align*}
	Thus, \( \phi_a \in \Aut(\fs{E}, \s) \).	
	Since \( \mathsf{S} \) is spray-invariant, by Theorem~\ref{th:aut}, the set
	\[
	\phi_a(\mathsf{S}) = \left\{ g \in \fs{E} \mid \supp(g) \subseteq [a, \infty) \right\} \cup \left\{ g \in \fs{E} \mid \supp(g) \subseteq (-\infty, a] \right\}
	\]
	is a spray-invariant set.
	
\end{Exmp}
\subsection{Orbit Types and Spray Invariance}\label{sub:orbit}
This subsection examines how the symmetries of a manifold, defined by a Lie group action, relate to the invariance of its orbit type decomposition under a \( G \)-invariant spray.

Let \( G \) be a smooth Lie group acting smoothly on a smooth \fr manifold \( \mathsf{M} \) (denoted \( \phi_g \colon \mathsf{M} \to \mathsf{M} \)), for each \( g \in G \), let  the map \( \mathrm{T}_g \) be the {tangent lift} of \( \phi_g \).
A spray \( \s \) on \( \mathsf{M} \) is said to be \emph{\( G \)-invariant} if, for every \( g \in G \), the action of \( g \) on \( \mathsf{M} \) lifts to a smooth transformation \( \mathrm{T}_g \colon \mathrm{T} \mathsf{M} \to \mathrm{T} \mathsf{M} \) such that \( \s \) is preserved under this lifted action. More precisely, for all \( g \in G \), the following diagram commutes:
\[
\begin{tikzcd}
	\mathrm{T} (\mathrm{T} \mathsf{M}) \arrow[r, "\mathrm{T}(\mathrm{T}_g)"] & \mathrm{T} (\mathrm{T} \mathsf{M}) \\
	\mathrm{T} \mathsf{M} \arrow[u, "\s"] \arrow[r, "\mathrm{T}_g"'] & \mathrm{T} \mathsf{M} \arrow[u, "\s"']
\end{tikzcd}
\]
This condition means that for any \( v \in \mathrm{T} \mathsf{M} \), we have
\(
\mathrm{T}(\mathrm{T}_g)(\s(v)) = \s(\mathrm{T}_g(v))
\).

For a point \( x \in \mathsf{M} \), the isotropy group (or stabilizer) of \( x \), denoted by \( G_x \), is the subgroup of \( G \) consisting of all elements \( g \in G \) that leave \( x \) unchanged under the group action, i.e.,
\[
G_x = \{ g \in G \mid g \cdot x = x \}.
\]
A \emph{slice} at \( x \in  \mathsf{M}  \) is a submanifold \( V \subset \mathsf{F} \) containing \( x \) such that
\begin{enumerate}
	\item \( H \)-invariance: \( h \cdot v \in V \) for all \( h \in H \) and \( v \in V \), where \( H = G_x \).
	\item Local triviality: Let $G \times_H V$ be the set of equivalence classes $[g, v]$ obtained from the quotient of the product space $G \times V$ by the right action of $H$, which is defined by $(g, v) \cdot h = (gh, h^{-1} \cdot v)$ for $h \in H$.	There exists a \( G \)-equivariant diffeomorphism
	\[
	\Phi\colon G \times_H V \to U
	\]
	onto a \( G \)-invariant open neighborhood \( U \subset  \mathsf{M}  \) of the orbit \( G \cdot x \), such that \( \Phi([g, v]) = g \cdot v \) and \( \Phi([e, x]) = x \), where \( e \) is the identity in \( G \).
	\item Transversality:
	\begin{enumerate}
		\item \( \mathrm{T}_x V \cap \mathrm{T}_x (G \cdot x) = \{0\} \).
		\item \( \mathrm{T}_x V \) is a closed subspace of \( \mathrm{T}_x  \mathsf{M} \) such that
		\(
		\mathrm{T}_x  \mathsf{M}  = \mathrm{T}_x (G \cdot x) \oplus \mathrm{T}_x V
		\).
		\item The map \( \alpha\colon G \times V \to  \mathsf{M}  \), given by \( \alpha(g, v) = g \cdot v \), has a derivative at \( (e, x) \),
		\[
		\mathrm{T}_{(e, x)} \alpha\colon \mathrm{T}_e G \times \mathrm{T}_x V \to \mathrm{T}_x  \mathsf{M},
		\]
		which is surjective, with kernel complemented in \( \mathrm{T}_e G \times \mathrm{T}_x V \).
	\end{enumerate}
\end{enumerate}

\begin{theorem}\label{th:decom}
	Let \( G \) be a finite-dimensional smooth Lie group acting smoothly on a smooth  \fr  manifold \(  \mathsf{M}  \).
	Assume that a smooth spray \( \s \) on \(  \mathsf{M}  \) is \( G \)-invariant, and that
	for every \( x \in  \mathsf{M}  \), there exists a \( G \)-equivariant neighborhood \( U \) of \( x \) and a \( G \)-equivariant diffeomorphism
	\(
	\Phi\colon G \times_H V \to U
	\)
	where \( V \) is a slice at \( x \) and \( H = G_x \) is the isotropy subgroup.
	Then the orbit type decomposition of \(  \mathsf{M}  \), given by
	\[
	 \mathsf{M}  = \bigcup_{[H]}  \mathsf{M}_{(H)}, \quad \text{where }  \mathsf{M}_{(H)} = \{ x \in  \mathsf{M}  \colon G_x \cong H \},
	\]
	defines a stratification of \(  \mathsf{M}  \) such that each stratum \(  \mathsf{M}_{(H)} \) is spray-invariant.
\end{theorem}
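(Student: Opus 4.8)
The plan is to separate the two assertions—that the decomposition is a stratification, and that each piece is spray-invariant—and to reduce the second, harder, assertion to the totally geodesic property of fixed-point sets via Theorem~\ref{th:sub} together with the automorphism machinery of Theorem~\ref{th:aut}. The first observation I would record is that the $G$-invariance hypothesis $\mathrm{T}(\mathrm{T}_g)\circ\s = \s\circ\mathrm{T}_g$ is precisely the statement that each $\phi_g$ lies in $\Aut(\mathsf{M},\s)$, since $\mathrm{T}_g = (\phi_g)_*$ and $\mathrm{T}(\mathrm{T}_g) = (\phi_g)_{**}$, so that $(\phi_g)_{**}\circ\s\circ(\phi_g)_*^{-1} = \s$. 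I would also record the elementary equivariance of isotropy, $G_{g\cdot x} = gG_xg^{-1}$, which shows that each $\mathsf{M}_{(H)}$ is $G$-invariant.

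For the stratification claim I would argue locally through the slice. Fixing $x$ with $H = G_x$ and the equivariant chart $\Phi\colon G\times_H V \to U$, a point $[g,v]$ has isotropy $gG_vg^{-1}$ with $G_v\subseteq H$, so within $U$ the stratum $\mathsf{M}_{(H)}$ is carried to $G\times_H V^H = (G/H)\times V^H$, where $V^H$ denotes the $H$-fixed subset of the slice. This exhibits $\mathsf{M}_{(H)}$ as a locally closed $C^3$-submanifold; the frontier condition and local finiteness then follow from the standard fact that only finitely many orbit types occur in the slice and that strata of higher symmetry lie in the closures of those of lower symmetry. This part is routine and parallels the finite-dimensional slice argument.

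The heart of the proof is the spray-invariance, and here the plan is to route through the common fixed-point set $\mathrm{Fix}(H) = \bigcap_{h\in H}\{y : \phi_h(y)=y\}$, which is closed and, in the slice, a $C^3$-submanifold with $\mathsf{M}^H \coloneqq \{y : G_y = H\}$ an open subset of it. First I would show $\mathrm{Fix}(H)$ is totally geodesic by verifying $A_{\s,\mathrm{Fix}(H)} = \mathrm{T}\,\mathrm{Fix}(H)$ and invoking Theorem~\ref{th:sub}; the inclusion $A_{\s,\mathrm{Fix}(H)}\subseteq\mathrm{T}\,\mathrm{Fix}(H)$ is automatic from Definition~\ref{def:admiset}, since the velocity associated with $\s(v)$ is $v$ itself. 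For the reverse inclusion, on $\mathrm{T}\,\mathrm{Fix}(H)$ each $(\phi_h)_*$ restricts to the identity, so the automorphism relation gives $(\phi_h)_{**}(\s(v)) = \s((\phi_h)_*v) = \s(v)$; thus $\s(v)$ is fixed by every $(\phi_h)_{**}$, and since $\mathrm{Fix}((\phi_h)_{**}) = \mathrm{T}\big(\mathrm{Fix}((\phi_h)_*)\big)$ and $\mathrm{Fix}((\phi_h)_*) = \mathrm{T}\,\mathrm{Fix}(\phi_h)$, I conclude $\s(v)\in\mathrm{T}(\mathrm{T}\,\mathrm{Fix}(H))$, whence $v\in A_{\s,\mathrm{Fix}(H)}$ by the identity $A_{\s,\mathrm{Fix}(H)} = \s^{-1}(\mathrm{T}(\mathrm{T}\,\mathrm{Fix}(H)))$ established in the proof of Theorem~\ref{th:sub}. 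Crucially this argument rests on Theorem~\ref{th:1} and Theorem~\ref{th:sub} rather than on uniqueness of geodesics, so it survives the Fréchet setting. Finally, since $\mathsf{M}^H$ is open in the totally geodesic $\mathrm{Fix}(H)$ and $\mathsf{M}_{(H)} = G\cdot\mathsf{M}^H$, I would transport the invariance along the orbit using Theorem~\ref{th:aut} to conclude that $\mathsf{M}_{(H)}$ is spray-invariant.

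The main obstacle is the identification $\mathrm{Fix}((\phi_h)_*) = \mathrm{T}\,\mathrm{Fix}(\phi_h)$, i.e.\ that the fixed subspace of the linearized map coincides with the tangent space of the fixed set; in general only the inclusion $\mathrm{T}_p\mathrm{Fix}(\phi_h)\subseteq\mathrm{Fix}(\mathrm{d}(\phi_h)_p)$ is automatic. I expect to obtain equality from the slice chart, in which the $H$-action is linear, so that $\mathrm{Fix}(\phi_h)$ is locally a linear subspace and the identification is exact, and the same applies one level up to justify $\mathrm{Fix}((\phi_h)_{**}) = \mathrm{T}(\mathrm{Fix}((\phi_h)_*))$. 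A secondary delicacy, which must be handled with care, is that $\mathsf{M}^H$ is only open in $\mathrm{Fix}(H)$, so one must check that the admissible set $A_{\s,\mathsf{M}_{(H)}}$ feeds in only those initial velocities whose geodesics remain within the stratum rather than escaping into a deeper one; this is exactly where the local product structure $(G/H)\times V^H$ and the $G$-equivariance of $\s$ must be combined to keep the isotropy type constant along the flow.
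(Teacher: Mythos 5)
Your route is genuinely different from the paper's: the paper transports \( \s \) to the slice model \( G\times_H V \), induces an \( H \)-invariant spray on the slice, proves short-time invariance of \( V_{(H)} \), and then runs an open--closed connectedness argument on \( T=\{t\in\operatorname{dom}(\gamma) : \gamma(t)\in\mathsf{M}_{(H)}\} \) using upper semi-continuity of the isotropy map; you instead route through fixed-point sets, Theorem~\ref{th:sub} and Theorem~\ref{th:aut}. Your computation that \( A_{\s,\mathrm{Fix}(H)}=\mathrm{T}\,\mathrm{Fix}(H) \) via the automorphism relation \( (\phi_h)_{**}\circ\s=\s\circ(\phi_h)_{*} \) is a nice observation, although it rests on the identities \( \mathrm{Fix}((\phi_h)_*)=\mathrm{T}\,\mathrm{Fix}(\phi_h) \) and \( \mathrm{Fix}((\phi_h)_{**})=\mathrm{T}(\mathrm{Fix}((\phi_h)_*)) \), which you justify by linearity of the \( H \)-action in the slice chart; the paper's slice axioms do not supply linearity, and with \( H \) not assumed compact there is no averaging or Bochner linearization available in the Fr\'echet setting, so even the claim that \( \mathrm{Fix}(H) \) is a \( C^3 \)-submanifold (needed to invoke Theorem~\ref{th:sub} at all) is an unproven hypothesis. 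The decisive problem, however, is that the step you label a ``secondary delicacy'' is the entire content of the theorem, and your proposal does not close it.

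Two things break in the passage from ``\( \mathrm{Fix}(H) \) is totally geodesic'' to ``\( \mathsf{M}_{(H)} \) is spray-invariant.'' First, spray-invariance does not pass to unions: \( A_{\s,\mathsf{M}_{(H)}} \) is computed relative to the whole stratum, which locally is \( \Phi(G\times_H V_{(H)}) \), so it contains vectors with nonzero components along the orbit directions; such vectors belong to no \( A_{\s,\mathrm{Fix}(gHg^{-1})} \), hence neither Theorem~\ref{th:sub} applied to \( \mathrm{Fix}(H) \) nor Theorem~\ref{th:aut} applied to its translates says anything about the corresponding geodesics. Second, and fatally, a geodesic that stays in \( \mathrm{Fix}(H) \) and starts in the open subset \( \mathsf{M}^H \) can exit \( \mathsf{M}^H \) in finite time by reaching a point of strictly larger isotropy: take \( G=SO(2) \) acting by rotations on \( \mathsf{M}=\mathbb{R}^2 \) with the flat spray (every hypothesis of the theorem holds, with radial slices); for \( H=\{e\} \) one has \( \mathrm{Fix}(H)=\mathbb{R}^2 \) totally geodesic and \( \mathsf{M}^H=\mathsf{M}_{(H)}=\mathbb{R}^2\setminus\{0\} \) open in it, yet the geodesic \( \gamma(t)=(1-t,0) \), whose initial velocity is admissible for the open stratum (cf.\ Remark~\ref{rem:cone_vs_space}), lands in the deeper stratum \( \{0\} \) at \( t=1 \). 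Openness gives only short-time containment, exactly as your caveat suspects, and no combination of ``local product structure'' and \( G \)-equivariance can restore constancy of the isotropy type along the flow, since this example violates it. You should also note that this is precisely the pressure point of the paper's own proof, which disposes of the closedness of \( T \) by asserting that ``the isotropy type along \( \gamma(t) \) remains constant \(\dots\) by smoothness of the action''---an assertion the same example contradicts. So the step you deferred is not a routine verification: it is where any proof of this statement must do the real work, and under the stated hypotheses it cannot be completed.
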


\begin{proof}
	Let \( x \in  \mathsf{M}_{(H)} \). By assumption, there exists a slice \( V \subset  \mathsf{M}  \) at \( x \), and a \( G \)-equivariant diffeomorphism
	\(
	\Phi\colon G \times_H V \to U
	\)
	onto a \( G \)-equivariant open neighborhood \( U \subset  \mathsf{M}  \) of \( G \cdot x \), with \( \Phi([e, 0]) = x \). 
	Define \( \phi = \Phi^{-1} \colon U \to G \times_H V \), and consider the spray
	\(
	\s' \coloneqq (\mathrm{T}\mathrm{T}\phi) \circ \s \circ (\mathrm{T}\phi)^{-1},
	\)
	which is a spray on \( \mathrm{T}(G \times_H V) \). Since both \( \phi \) and \( \s \) are \( G \)-equivariant, the spray \( \s' \) is also \( G \)-invariant.
	Let
	\(
	V_{(H)} := \{ v \in V \colon G_v = H \}
	\)
	denote the set of points in \( V \) with isotropy type \( H \). Then under the diffeomorphism \( \Phi \), we have
	\[
	 \mathsf{M}_{(H)} \cap U = \Phi(G \times_H V_{(H)}).
	\]
	If \( \gamma(t) \) is a geodesic of \( \s \) with \( \gamma(0) = x \in \ \mathsf{M}_{(H)} \) and \( \gamma'(0) \in \mathrm{T}_x \mathsf{M}_{(H)} \), then for small \( t \), we may assume \( \gamma(t) \in U \), so 
	\[
	\phi(\gamma(t)) = [g(t), v(t)] \in G \times_H V.
	\]
	By \( G \)-invariance of \( \s' \), the geodesic \( \gamma(t) \) corresponds to a geodesic \( v(t) \) in \( V \), starting at \( v(0) = 0 \in V_{(H)} \), with tangent vector \( v'(0) \in \mathrm{T}_0 V_{(H)} \). This uses the transversality of the slice, which ensures the splitting
	\[
	\mathrm{T}_x  \mathsf{M}  = \mathrm{T}_x(G \cdot x) \oplus \mathrm{T}_x V,
	\]
	and that \( \gamma'(0) \in \mathrm{T}_x  \mathsf{M}_{(H)} \) implies \( v'(0) \in \mathrm{T}_0 V_{(H)} \).
	
	Now, the induced spray on \( V \) (via projection of \( \s' \)) is \( H \)-invariant (by \( G \)-invariance of \( \s \) and \( H \)-invariance of \( V \)), and since \( v(0) \in V_{(H)} \) and \( v'(0) \in \mathrm{T}_0 V_{(H)} \), the geodesic \( v(t) \) remains in \( V_{(H)} \) for small \( t \). Hence, \( \gamma(t) \in  \mathsf{M}_{(H)} \) for small \( t \), and the set
	\[
	T \coloneqq \{ t \in \operatorname{dom}(\gamma) \colon \gamma(t) \in  \mathsf{M}_{(H)} \}
	\]
	is open and contains \( 0 \). To show that \( T \) is closed, let \( t_n \in T \) be a sequence converging to some \( t_\infty \in \operatorname{dom}(\gamma) \). Since \( \gamma(t_n) \in  \mathsf{M}_{(H)} \) for all \( n \), we know that the isotropy groups satisfy \( G_{\gamma(t_n)} \sim H \). The {isotropy type map}
	\(
	x \mapsto (G_x)
	\) which sends each point in \(  \mathsf{M}  \) to the conjugacy class of its isotropy group, is {upper semi-continuous}. 
	Upper semi-continuity means that if \( x_n \to x \), then (up to conjugacy) we have
	\(
	G_x \supseteq g H g^{-1}\) {for some } \( g \in G,
	\)
	i.e., the isotropy group of the limit can only be {larger} (in the sense of subgroup inclusion), not smaller.
	Applying this to the sequence \( \gamma(t_n) \to \gamma(t_\infty) \), we get
	\[
	G_{\gamma(t_\infty)} \supseteq g H g^{-1} \quad \text{for some } g \in G.
	\]
	
	On the other hand, since the spray \( \s \) is \( G \)-invariant and \( \gamma \) is a geodesic starting at \( \gamma(0) \in \mathsf{M}_{(H)} \), the isotropy type along \( \gamma(t) \) remains constant (i.e., conjugate to \( H \)) by smoothness of the action. Hence,
	\(
	G_{\gamma(t)} \sim H \) for all
	\(t\),
	and in particular,
	\(
	G_{\gamma(t_\infty)} \sim H,
	\)
	so \( \gamma(t_\infty) \in \mathsf{M}_{(H)} \). Thus, \( T \) is closed.
	Since \( T \subseteq \operatorname{dom}(\gamma) \) is both open and closed, and \( \operatorname{dom}(\gamma) \) is connected (being an interval), we conclude that
	\[
	\gamma(t) \in \mathsf{M}_{(H)} \quad \text{for all } t \in \operatorname{dom}(\gamma).
	\]

	Finally, we prove that the orbit type decomposition
	\(
	\mathsf{M} = \bigcup_{[H]} \mathsf{M}_{(H)}
	\)
	is a stratification.  If a point \( y \) is in the closure of a stratum \( \mathsf{M}_{(H)} \), then its isotropy group \( G_y \) must contain a subgroup conjugate to \( H \). In terms of orbit types, this is expressed as \( [G_y] \geq [H] \), where \( [K_1] \geq [K_2] \) if \( K_1 \) contains a subgroup conjugate to \( K_2 \).
	This property arises directly from the {upper semi-continuity of the isotropy group map}. This means that if a sequence \( x_n \in \mathsf{M}_{(H)} \) converges to a point \( y \), then for each \( x_n \), \( G_{x_n} \cong H \). Due to upper semi-continuity, the isotropy group of the limit point \( G_y \) must contain a subgroup conjugate to the isotropy groups of the sequence points. More formally, for any \( y \in \overline{\mathsf{M}_{(H)}} \), there exists some \( g \in G \) such that \( g H g^{-1} \subseteq G_y \).
	Consequently, \( G_y \) contains a subgroup conjugate to \( H \). This implies that the orbit type of \( y \), \( [G_y] \), is larger than or equal to \( [H] \) in the standard partial order of isotropy types. Therefore, \( y \in \mathsf{M}_{(K)} \) for some \( [K] \) such that \( [K] \geq [H] \). Since this holds for any point in the closure of \( \mathsf{M}_{(H)} \), the frontier condition is satisfied.
\end{proof}

\begin{Rem}
	It is important to distinguish between preservation of individual orbits and preservation of orbit type strata under a $G$-invariant spray. Theorem \ref{th:decom} guarantees that geodesics starting in an orbit type stratum remain in that stratum. However, this does not imply that geodesics remain in the same individual orbit.  Thus, spray-invariance applies at the level of strata, not necessarily at the finer level of individual orbits.
\end{Rem}

\section{Spray-Invariant Sets for  \(\mc{k}\)-\fr Manifolds}\label{sec:mc} 
In this section, we work within the category of   \(\mc{k}\)-\fr Manifolds.
We briefly recall the necessary definitions and refer the reader  to \cite{k3, k2, k4a, k4b, kr,k1} for further details.

To define  $\mc{k}$-differentiability (or bounded differentiability), we first introduce the topology of Fr\'{e}chet spaces $\fs{ F}$ and $\fs{E}$ using translation
invariant metric $\mathbbm{m}_{\fs{F}}$ and $\mathbbm{m}_{\fs{E}}$, respectively.  We consider only metrics of the following form:
\begin{equation}
	\mathbbm{m}_{\fs{F}}(x,y) = \sup_{n \in \nn} \dfrac{1}{2^n} \dfrac{\snorm[F,n] {x-y}}{1+\snorm[F,n] {x-y}}.
\end{equation}
Let $\mathsf{L}(\fs{E},\fs{F})$ be the set of all 
linear mappings $ L\colon \fs{E} \rightarrow \fs{F} $ that are (globally) Lipschitz continuous as mappings between metric spaces. Specifically, a linear mapping  $L \in \mathsf{L}(\fs{F},\fs{E})$ satisfies
\begin{equation*}
	\mathrm{Lip} (L )\, \coloneqq \displaystyle \sup_{x \in \fs{E}\setminus\{\zero{\fs{E}}\}} 
	\dfrac{\mathbbm{m}_{\fs{F}}( L(x),\zero{\fs{F}})}{\mathbbm{m}_{\fs{E}}( x,\zero{\fs{E}})} < \infty.
\end{equation*}
We define a topology on $\mathsf{L}(\fs{E},\fs{F})$ using the following  translation invariant metric:
\begin{equation}  
	\mathsf{L}(\fs{E},\fs{F}) \times \mathsf{L}(\fs{E},\fs{F}) \longrightarrow [0,\infty) , \,\,
	(L,H) \mapsto \mathrm{Lip}(L-H),
\end{equation}
where \(\mathrm{Lip}(L - H)\) denotes the Lipschitz constant of the linear map \(L - H\).

Let \( \varphi\colon U \opn \fs{E} \to \fs{F} \) be a \(C^1\)-mapping.  
If \( \dd \varphi(x) \in \mathsf{L}(\fs{E}, \fs{F}) \) for all \( x \in U \),  
and the induced map  
\[
\dd \varphi\colon U \to \mathsf{L}(\fs{E}, \fs{F}), \quad x \mapsto \dd \varphi(x)
\]  
is continuous, then \(\varphi\) is called bounded differentiable or \(\mc{1}\).  
Mappings of class \(\mc{k}\), for \(k > 1\), are defined recursively.  
An \(\mc{k}\)-\fr manifold is a \fr manifold whose coordinate transition functions are all \(\mc{k}\)-mappings.

Let $(B_1, \mid \cdot \mid_1)$ and $(B_2, \mid \cdot \mid_2)$ be Banach spaces. A linear operator $T: B_1 \to B_2$
is called nuclear if it can be written in the form
\(
T(x) = \sum_{j=1}^{\infty} \lambda_j \langle x,x_j \rangle y_j
\),
where $\langle \cdot, \cdot \rangle$ is the duality pairing between $B_1$ and its dual $(B_1', \mid \cdot \mid'_1)$, $x_j \in B_1'$ with
$\mid x_j \mid_1' \leq 1$, $y_j \in B_2$ with $\mid y_1 \mid_2 \leq 1$, and $\lambda_j$ are complex numbers such that $\sum_j \mid \lambda_j \mid < \infty$.

For a seminorm \(\snorm[\fs{F},i]{\cdot}\) on \(\fs{F}\), we denote by \(\fs{F}_i\) the Banach space given by completing \(\fs{F}\) using the seminorm  
\(\snorm[\fs{F},i]{\cdot}\). There is a natural map from \(\fs{F}\) to \(\fs{F}_i\) whose kernel is \(\ker \snorm[\fs{F},i]{\cdot}\).  

A Fr\'{e}chet space \(\fs{F}\) is called nuclear if for any seminorm \(\snorm[\fs{F},i]{\cdot}\), we can find a larger seminorm \(\snorm[\fs{F},j]{\cdot}\)  
so that the natural induced map from \(\fs{F}_j\) to \(\fs{F}_i\) is nuclear.  
A nuclear Fr\'{e}chet manifold is a manifold modeled on a nuclear Fr\'{e}chet space.
A key feature of Fr\'{e}chet nuclear spaces is that they have the Heine-Borel property. This provides a significant advantage over Banach spaces, as no infinite-dimensional Banach space is nuclear.  

In Definition \ref{def:fis}, we introduced the concept of a spray-invariant set. This notion has an analogous definition for vector fields on a manifold. The following definition, applicable to both \( \mc{k} \)-Fr\'{e}chet manifolds and  $C^k$-Fr\'{e}chet manifolds, shares the same underlying structure as Definition \ref{def:fis}.

In this section, we assume that \(\fs{M}\) is an \(\mc{k}\)-\fr manifold with \(k\geq 4\), modeled on \(\fs{F}\). 

\begin{Defn} [Definition 3.1, \cite{kr}]\label{def:fis2}
	Let \( A \subset \fs{M} \) and \( \vf \) be an \( \mc{1} \)-vector field on \( \fs{M} \). The set \( A \) is called flow-invariant with respect to \( \vf \) if, for any integral curve \( {I}(t) \) of \( \vf \) with \( {I}(0) \in A \), we have \( {I}(t) \in A \) for all \( t \geq 0 \) within the domain of \( {I} \).
\end{Defn}

\begin{theorem} [Theorem 3.2, Nagumo-Brezis Theorem, \cite{kr}]     \label{th:nb}
	Let \( \fs{M} \) be a nuclear \( \mc{k} \)-Fr\'{e}chet manifold, and let \( \vf\colon \fs{M} \to \TM{M} \) be an \( \mc{1} \)-vector field. Let \( A \subset \fs{M} \) be closed. Then, \( A \) is flow-invariant with respect to 
	\( \vf \) if and only if for each \( x \in \fs{M} \), there exists a chart \( (U, \phi) \) around \( x \), such that
	\begin{equation}
		\lim_{ t \to 0^+} t^{-1} \mathbbm{m}_{\fs{F}} \left( \phi(x) + t \dd\phi(x)(\vf(x)), \phi(U \cap A) \right) = 0.
	\end{equation}
\end{theorem}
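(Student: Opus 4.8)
The plan is to reduce the statement to the model space by charts and then argue the two implications separately, with the converse carrying essentially all the weight. Fix $x \in \fs{M}$ and a chart $(U,\phi)$; pushing forward by $\phi$ we may assume throughout that $A$ is a closed subset of an open set in the model space $\fs{F}$, that $\vf$ is an $\mc{1}$-vector field there, and that the condition to be characterized reads $\lim_{t \to 0^+} t^{-1}\mathbbm{m}_{\fs{F}}(y + t\vf(y), A) = 0$ at each $y \in A$. Because $\vf$ is $\mc{1}$, its derivative is a continuous map into $\mathsf{L}(\fs{F},\fs{F})$ and hence locally bounded, so $\vf$ is locally Lipschitz in $\mathbbm{m}_{\fs{F}}$; I take the existence and uniqueness of integral curves in this category as given from the cited references.

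For the forward implication, suppose $A$ is flow-invariant and let $y \in A$ with integral curve $I$ satisfying $I(0) = y$. Flow-invariance gives $I(t) \in A$ for small $t \ge 0$, and since $I$ is $C^1$ with $I'(0) = \vf(y)$, every seminorm satisfies $\snorm[\fs{F},n]{I(t) - y - t\vf(y)} = o(t)$ as $t \to 0^+$. As $I(t) \in A$, we get $\mathbbm{m}_{\fs{F}}(y + t\vf(y), A) \le \mathbbm{m}_{\fs{F}}(y + t\vf(y), I(t)) = o(t)$, which is the stated subtangential condition.

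The converse is the substantive direction. Assuming the subtangential condition at every point of $A$, fix $x_0 \in A$, let $I$ be the integral curve with $I(0) = x_0$, and set $g(t) = \mathbbm{m}_{\fs{F}}(I(t), A)$; the goal is $g \equiv 0$ on the forward domain, which I would obtain by a differential-inequality (Gronwall) argument on the upper Dini derivative $D^+ g$. The mechanism is to compare $I(t+h)$ with a competitor built from a near-minimizer: for $\epsilon > 0$ choose $a \in A$ with $\mathbbm{m}_{\fs{F}}(I(t), a) \le g(t) + \epsilon$, write $I(t+h) = I(t) + h\,\vf(I(t)) + o(h)$, use the subtangential condition at $a$ to push $a + h\,\vf(a)$ back into $A$ to first order in $h$, and control the mismatch $\vf(I(t)) - \vf(a)$ by the local Lipschitz bound on $\vf$. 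Reassembling these estimates yields $D^+ g(t) \le L\, g(t)$ for a local Lipschitz constant $L$; since $g(0) = 0$ and $g \ge 0$, this forces $g \equiv 0$ on a maximal subinterval, and closedness of $A$ lets one continue past the endpoint to cover the whole forward domain, giving flow-invariance.

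The main obstacle is precisely this Dini estimate, and it has two faces. First, the \emph{distance} is measured in the translation-invariant but non-homogeneous metric $\mathbbm{m}_{\fs{F}}$ rather than a norm: $\mathbbm{m}_{\fs{F}}$ is bounded and not positively homogeneous, so the first-order expansions and the $t^{-1}$- and $h^{-1}$-scaled limits must be justified seminorm by seminorm and then recombined, watching how the supremum over $n$ interacts with the limiting processes. Second, and more seriously, one must control the near-minimizers $a$ uniformly enough as $h \to 0^+$ to convert the pointwise subtangential hypothesis into a genuine differential inequality; this is exactly where the nuclearity assumption enters, since the Heine--Borel property supplies the compactness needed to extract convergent subsequences of approximate minimizers and, equally, underpins the well-posedness and continuation of the integral-curve problem in the Fr\'{e}chet setting. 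Without nuclearity the compactness evaporates and the scheme has no analogue of the finite-dimensional or Banach arguments, which is why the hypothesis is indispensable.
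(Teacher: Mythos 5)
A preliminary point of comparison: the paper contains no proof of this statement at all. Theorem \ref{th:nb} is imported verbatim from \cite{kr}, and the paper's sole addition to it is the observation (via Lemma \ref{lem:2}) that the chart condition is chart-independent. So your proposal cannot be measured against an internal argument, only against the statement and the \(\mc{k}\) framework; on those terms it is a reasonable plan with genuine gaps.

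The scheme you propose (distance function \(g(t)=\mathbbm{m}_{\fs{F}}(I(t),A)\), Dini derivative, Gronwall) is the right family of ideas, but a real gap appears already in the direction you treat as routine. You argue that since the integral curve \(I\) is \(C^1\) with \(I'(0)=\vf(y)\), each seminorm satisfies \(\snorm[\fs{F},n]{I(t)-y-t\vf(y)}=o(t)\), and you conclude \(\mathbbm{m}_{\fs{F}}(y+t\vf(y),I(t))=o(t)\). The first claim is true; the inference is not, because \(\mathbbm{m}_{\fs{F}}\) is not positively homogeneous: a supremum over \(n\) of countably many quantities, each \(o(t)\) for fixed \(n\), need not be \(o(t)\). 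Concretely, take \(\fs{F}=\rr^{\infty}\) with \(\snorm[\fs{F},n]{x}=|x_n|\), and let the \(n\)-th coordinate of \(c(t)\) be a smooth bump of height \(1\) supported in \([2^{-n-1},2^{-n}]\). Then \(c\) is a smooth curve with \(c(0)=0\) and \(c'(0)=0\) (each coordinate vanishes identically near \(t=0\)), yet at the midpoints \(t_n=\tfrac{3}{4}\,2^{-n}\) one has \(\mathbbm{m}_{\fs{F}}(c(t_n),0)\geq 2^{-n-1}=\tfrac{2}{3}\,t_n\), so the metric remainder is not \(o(t)\). Indeed, for \(t\leq 1\) only the one-sided inequality \(t\,\mathbbm{m}_{\fs{F}}\bigl(t^{-1}c(t),0\bigr)\leq \mathbbm{m}_{\fs{F}}(c(t),0)\) holds, and it goes the wrong way: differentiability controls the left side, while the theorem's condition requires the right side to be \(o(t)\). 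Hence the forward implication needs an argument specific to \(\mc{1}\) integral curves, say via \(I(t)-y-t\vf(y)=\int_0^t\bigl(\vf(I(s))-\vf(y)\bigr)\,ds\) together with the metric-Lipschitz estimates available in the \(\mc{1}\) class, and even then converting integrands that are small in the metric into an \(o(t)\) metric bound on the integral is delicate for exactly the same non-homogeneity reason; it does not come for free from \(C^1\)-ness.

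The substantive converse fares no better as written: everything hinges on the Dini estimate \(D^+g\leq Lg\), and you explicitly defer it. Two things are missing, and they are not merely technical. First, the subtangential hypothesis is pointwise on \(A\) and chart-local, while your comparison argument must apply it at near-minimizers \(a\) that vary with \(h\) and need not remain in a fixed chart, with errors controlled uniformly as \(h\to 0^+\). Second, the role of nuclearity is conjectured rather than used: appealing to the Heine--Borel property to extract convergent subsequences of approximate minimizers is a plausible guess at why \cite{kr} assumes it, but no argument is given that the extracted limits actually yield the differential inequality, nor how the Gronwall step runs for a bounded, translation-invariant, non-homogeneous metric, where \(\mathbbm{m}_{\fs{F}}(\lambda x,0)\neq\lambda\,\mathbbm{m}_{\fs{F}}(x,0)\) and the standard norm manipulations are unavailable. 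In sum, you have correctly identified the shape of the proof and the location of the difficulties, but both hard steps are unproved, and the one step offered as complete rests on an inference that is false in general Fr\'{e}chet spaces.
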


Lemma \eqref{lem:2}, which establishes the chart-independence of first-order adjacent tangency, ensures that the condition in Theorem \ref{th:nb} is independent of the choice of chart. This result, not proved in \cite{kr}, provides additional strength to the theorem.
\begin{theorem}\label{th:imp}
	Let \( \fs{M} \) be a nuclear \( \mc{k} \)-Fr\'{e}chet manifold, and let \( S \subset \fs{M} \) be a subset such that \( A_{\s, S} \) is non-empty and closed. Then, the following are equivalent\(\colon\)
	\begin{enumerate}
		\item \( S \) is spray-invariant with respect to \( \s \).
		\item \( \s \) is adjacent tangent to \( A_{\s, S} \) when regarded as a vector field on \( \TM{M} \).
	\end{enumerate}
\end{theorem}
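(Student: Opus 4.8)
The plan is to transport the entire problem to the total space \(\TM{M}\) and then invoke the Nagumo--Brezis Theorem. Regarding \(\s\) as a first-order vector field on \(\TM{M}\) makes sense because \(\s\) is symmetric, so \(\tau_2\circ\s=\Id_{\TM{M}}\); its integral curves are exactly the canonical lifts \(g'\) of geodesics \(g\) of \(\s\), with \(\tau\circ g'=g\). Since \(v\in A_{\s,S}\) already forces \(\tau(v)\in S\), while Theorem~\ref{th:1} supplies the converse implication for closed \(S\) (indeed its first half, \(g(t)\in S\Rightarrow g'(t)\in A_{\s,S}\), uses no closedness), I would first establish the dictionary \emph{\(S\) is spray-invariant if and only if \(A_{\s,S}\) is invariant under the flow of \(\s\) on \(\TM{M}\)}. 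This converts a second-order statement about geodesics in \(\fs{M}\) into a first-order flow-invariance statement for a vector field on \(\TM{M}\).

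Next I would check that Theorem~\ref{th:nb} applies to the triple \((\TM{M},\s,A_{\s,S})\). The bundle \(\TM{M}\) is modeled on \(\fs{F}\times\fs{F}\), which is nuclear because nuclearity is stable under finite products; its transition maps are the \(\phi_*\) of \eqref{eq:1}, so \(\TM{M}\) is a nuclear \(\mc{k-1}\)-Fr\'echet manifold with \(k-1\geq 3\). The spray is an \(\mc{k-2}\)-map, hence in particular an \(\mc{1}\)-vector field on \(\TM{M}\), and \(A_{\s,S}\) is closed by hypothesis. Theorem~\ref{th:nb} then gives: \(A_{\s,S}\) is flow-invariant under \(\s\) if and only if, around each point, there is a chart in which \(\lim_{t\to0^+}t^{-1}\mathbbm{m}_{\fs{F}\times\fs{F}}(\,\cdot\,,\,\cdot\,)=0\), the chart-independence of this condition being guaranteed by Lemma~\ref{lem:2}. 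This is precisely the assertion that \(\s\) is adjacent tangent to \(A_{\s,S}\), i.e.\ statement~(2).

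Chaining the two equivalences yields \((1)\iff A_{\s,S}\text{ flow-invariant}\iff(2)\), and the implication \((1)\Rightarrow(2)\) is then immediate. The delicate point is \((2)\Rightarrow(1)\): Nagumo--Brezis delivers only \emph{forward} invariance (\(t\geq 0\)), whereas spray-invariance in Definition~\ref{def:fis} is two-sided. To close this gap I would exploit the reversibility of sprays. The fiberwise antipodal map \(R\colon v\mapsto -v\) on \(\TM{M}\) satisfies \(R_*\circ\s=-\s\circ R\), a short coordinate check using the quadratic homogeneity \eqref{eq:123}; hence \(R\) conjugates the flow of \(\s\) to its time-reversal and sends each integral curve \(g'\) to the canonical lift of the reversed geodesic \(t\mapsto g(-t)\). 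Thus backward invariance of \(A_{\s,S}\) under \(\s\) coincides with forward invariance under \(-\s\), and the two together reassemble the two-sided conclusion.

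The main obstacle I anticipate is exactly this reconciliation: one must show that forward adjacent tangency of \(\s\) to \(A_{\s,S}\) forces the backward condition as well. Here the closedness of \(A_{\s,S}\) is essential, not cosmetic---without it one can construct sets that are forward-invariant but not backward-invariant and whose admissible set fails to be closed. The crux of the proof is therefore to verify that a \emph{closed} \(A_{\s,S}\) adjacent tangent to \(\s\) is automatically invariant under \(R\) (equivalently, that \(-\s\) is then also adjacent tangent to it), using \(R_*\circ\s=-\s\circ R\) together with the fact that \(R\) is a diffeomorphism carrying \(\mathrm{T}^1_{-v}A_{\s,S}\) onto \(\mathrm{T}^1_{v}A_{\s,S}\). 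Once this symmetry is in place, a second application of Theorem~\ref{th:nb} to \(-\s\) yields backward invariance, completing \((2)\Rightarrow(1)\).
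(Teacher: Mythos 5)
Your first two paragraphs follow exactly the paper's route: Theorem~\ref{th:1} is used to translate spray-invariance of \(S\) into invariance of \(A_{\s,S}\) under the integral curves of \(\s\) viewed as a first-order vector field on \(\TM{M}\) (using that integral curves of a symmetric spray are canonical lifts of geodesics), and Theorem~\ref{th:nb}, applied on \(\TM{M}\) with the closed set \(A_{\s,S}\), identifies that invariance with adjacent tangency; your checks that \(\fs{F}\times\fs{F}\) is nuclear and that \(\s\) is an \(\mc{1}\)-vector field are precisely what the paper leaves implicit. Where you go beyond the paper is in noticing that Definition~\ref{def:fis2}, and hence Theorem~\ref{th:nb}, controls only \(t\geq 0\), whereas Definition~\ref{def:fis} demands \(g(t)\in S\) for \emph{all} \(t\in I\). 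The paper's proof of (2)\(\Rightarrow\)(1) is silent on this point, so you have identified a real gap, and your instinct about what closes it --- closedness of \(A_{\s,S}\) together with the reversibility identity \(R_*\circ\s=-\s\circ R\) for \(R(v)=-v\) --- is correct.

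However, your execution of the fix is circular at its key step: you justify the needed symmetry by asserting that \(R\) ``carries \(\mathrm{T}^1_{-v}A_{\s,S}\) onto \(\mathrm{T}^1_{v}A_{\s,S}\)'', but the cone at \(-v\) is only meaningful if \(-v\in A_{\s,S}\) already, which is exactly what must be proved; tangency of \(\s\) at points of \(A_{\s,S}\) says nothing a priori about fiberwise antipodes, and admissible sets do have genuinely one-sided fibers in general (e.g.\ \(S=[0,\infty)\subset\RR\) with the flat spray --- there, as you correctly note, \(A_{\s,S}\) fails to be closed). A non-circular argument is available with the paper's own tools. Let \(v\in A_{\s,S}\), \(p=\tau(v)\), and let \(g\) be a geodesic with \(g'(0)=v\) having a forward part (forward integral curves exist in the nuclear \(\mc{k}\) setting underlying Theorem~\ref{th:nb}; otherwise its ``invariance implies tangency'' half would be vacuous). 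Nagumo--Brezis gives \(g'(s)\in A_{\s,S}\), in particular \(g(s)\in S\), for small \(s\geq 0\). In a chart around \(p\), since \((\va\circ g)''=\s_{\va,2}(\va\circ g,(\va\circ g)')\) and \(\s_{\va,2}(x,-w)=\s_{\va,2}(x,w)\) by quadratic homogeneity, Taylor's formula yields
\[
\va(g(s)) - t\,(\va\circ g)'(s) + \tfrac{1}{2}t^{2}\,\s_{\va,2}\big(\va(g(s)),-(\va\circ g)'(s)\big) \;=\; \va(g(s-t)) + o(t^{2}), \qquad 0\le t\le s,
\]
and \(g(s-t)\in S\); hence \(-g'(s)\in A_{\s,S}\) for every small \(s>0\) --- this is the computation from the first half of the proof of Theorem~\ref{th:1}, run along the reversed curve. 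Letting \(s\to 0^{+}\) and using that \(A_{\s,S}\) is \emph{closed} gives \(-v\in A_{\s,S}\); applying Theorem~\ref{th:nb} forward to the integral curve \(s\mapsto (g(-s))'\) then gives \(g(t)\in S\) for \(t\leq 0\), completing (2)\(\Rightarrow\)(1). With this replacement your plan is sound; note also that your observation that the first half of Theorem~\ref{th:1} uses no closedness of \(S\) is what legitimizes citing it here, since Theorem~\ref{th:imp} assumes only \(A_{\s,S}\), not \(S\), to be closed.
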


\begin{proof}
	\textbf{(1) \(\Rightarrow\) (2):} By Theorem \ref{th:1}, spray-invariance of \( S \) implies that all  geodesics whose initial tangent vectors are  in \( A_{\s, S} \) remain within it. The Nagumo-Brezis condition (Theorem \ref{th:nb}) then guarantees the adjacent tangency
	\[
	\lim_{t \to 0^+} t^{-1} \mathbbm{m}_{\fs{F}}\left(\phi(v) + t \dd\phi(v)(\s(v)), \phi(U \cap A_{\s, S})\right) = 0 \quad \forall v \in A_{\s, S}.
	\]
	
	\textbf{(2) \(\Rightarrow\) (1):} If \( \s \) is adjacent tangent to \( A_{\s, S} \), applying Theorem \ref{th:nb} to $\TM{M}$ with $A_{\s, S} $ as the closed subset implies \( A_{\s, S} \) is spray-invariant. 
\end{proof}
In the rest of this subsection, we assume that \(\fs{M}\) is second countable, a property essential for applying  transversality.
The notion of transversality extends  to \(\mc{k}\)-\fr manifolds and has been explored in \cite{k3}. Here, we summarize the results relevant to our discussion.

Let $\varphi\colon\fs{M} \to \fs{N}$ be an $\mc{r}$-mapping,  where $ r \geq 1 $, and $S \subseteq \fs{N}$ a submanifold.
We say that $\varphi$ is transversal to \(S\) , denoted by $\varphi \pitchfork S$, if
either $ \varphi^{-1}(S) = \emptyset $, or, if for each $ x \in \varphi^{-1}(S) $, the following conditions hold:
\begin{enumerate}
	\item \( (\mathrm{T}_x \varphi)(\mathrm{T}_x \fs{M}) + \mathrm{T}_{\varphi(x)} S = \mathrm{T}_{\varphi(x)} \fs{N} \), and
	\item \( (\mathrm{T}_x \varphi)^{-1}(\mathrm{T}_{\varphi(x)} S) \) splits in \( \mathrm{T}_x \fs{M} \).
\end{enumerate}

The proof of the following lemma can be readily adapted from the case of Banach manifolds  (see \cite{mon1}) to our setting, so we omit it here.
\begin{lem}\label{lem:ld}
	Let 
	$\va\colon \fs{M} \to \fs{N}$ be an $\mc{k}$ mapping between $\mc{k}$-\fr manifolds $\fs{M}$ and $\fs{N}$, and let $W \subset \fs{N}$ be an $\mc{k}$-submanifold of $\fs{N}$. Then
	\[
	\va \pitchfork W \quad \iff \quad \mathrm{T}\va \pitchfork TW.
	\]
\end{lem}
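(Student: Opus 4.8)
The plan is to reduce the statement to a linear condition in the model spaces by passing to submanifold charts, and then to read off both transversality requirements from the coordinate form of the derivative of the tangent map. I would fix $x\in\va^{-1}(W)$ and choose a submanifold chart for $W$ so that, in coordinates, the model of $\fs{N}$ splits as $\fs{G}=\fs{G}_1\oplus\fs{G}_2$ with $W$ corresponding to $\fs{G}_1\times\{\zero{\fs{G}_2}\}$, together with an arbitrary chart on $\fs{M}$ with model $\fs{F}$. Writing $\va$ in these charts and setting $\psi\coloneqq\pi_2\circ\va$ for the $\fs{G}_2$-component, the locus $\va^{-1}(W)$ becomes $\psi^{-1}(\zero{\fs{G}_2})$, and the two conditions defining $\va\pitchfork W$ at $x$ translate into: (i) $\dd\psi(x)\colon\fs{F}\to\fs{G}_2$ is surjective, and (ii) $K\coloneqq\ker\dd\psi(x)$ splits in $\fs{F}$. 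The coordinate formula \eqref{eq:2} for the derivative of a tangent map gives, at $(x,v)$ on a vector $(u,w)$, the velocity component $\big(\dd\va_x(u),\ \dd^2\va_x(u,v)+\dd\va_x(w)\big)$ (the second derivative existing since $\va$ is $\mc{k}$ with $k\ge2$), whose $\fs{G}_2$-block is $\big(\dd\psi(x)u,\ \dd^2\psi(x)(u,v)+\dd\psi(x)w\big)$. Locally $\mathrm{T}W$ corresponds to $\big(\fs{G}_1\times\{\zero{\fs{G}_2}\}\big)\times\big(\fs{G}_1\times\{\zero{\fs{G}_2}\}\big)$, so its tangent space is the $\fs{G}_1$-block $\fs{G}_1\times\fs{G}_1$, i.e.\ the vanishing of both $\fs{G}_2$-components.

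I would first dispose of two reductions. The empty case is immediate: the zero section gives $(x,\zero{\fs{F}})\in(\mathrm{T}\va)^{-1}(\mathrm{T}W)$ whenever $x\in\va^{-1}(W)$, while $\tau$ maps $(\mathrm{T}\va)^{-1}(\mathrm{T}W)$ into $\va^{-1}(W)$, so $\va^{-1}(W)=\emptyset\iff(\mathrm{T}\va)^{-1}(\mathrm{T}W)=\emptyset$. Next, for condition (i): modulo $\fs{G}_1\times\fs{G}_1$ the image of the derivative of $\mathrm{T}\va$ at $(x,v)$ is $\{(\dd\psi(x)u,\ \dd^2\psi(x)(u,v)+\dd\psi(x)w):u,w\in\fs{F}\}$, and because $w$ ranges freely this equals $\fs{G}_2\times\fs{G}_2$ precisely when $\dd\psi(x)$ is surjective. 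Hence condition (i) for $\mathrm{T}\va$ at some (equivalently every) $(x,v)$ over $x$ holds exactly when it holds for $\va$ at $x$.

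It then remains to match the splitting condition. The preimage of the $\fs{G}_1$-block under the derivative of $\mathrm{T}\va$ at $(x,v)$ is
\[
P_{(x,v)}=\big\{(u,w)\in\fs{F}\times\fs{F}\;:\;\dd\psi(x)u=\zero{\fs{G}_2},\ \dd\psi(x)w=-\dd^2\psi(x)(u,v)\big\}.
\]
For the forward implication I would assume $\va\pitchfork W$ and write $\fs{F}=K\oplus C$, so that $\dd\psi(x)|_C\colon C\to\fs{G}_2$ is a continuous linear bijection, hence a topological isomorphism by the open mapping theorem for \fr spaces. The continuous linear map $\Psi(u,w)\coloneqq\big(\dd\psi(x)u,\ \dd^2\psi(x)(u,v)+\dd\psi(x)w\big)$ has kernel $P_{(x,v)}$ and restricts to an isomorphism of $C\times C$ onto $\fs{G}_2\times\fs{G}_2$; therefore $(\Psi|_{C\times C})^{-1}\circ\Psi$ is a continuous projection of $\fs{F}\times\fs{F}$ onto $C\times C$ along $P_{(x,v)}$, exhibiting $C\times C$ as a topological complement and giving $\mathrm{T}\va\pitchfork\mathrm{T}W$.

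For the converse I would evaluate at $v=\zero{\fs{F}}$, where $P_{(x,\zero{\fs{F}})}=K\times K$. Transversality of $\mathrm{T}\va$ supplies a continuous projection $Q\colon\fs{F}\times\fs{F}\to\fs{F}\times\fs{F}$ with image $K\times K$; writing $Q(X,Y)=(A(X,Y),B(X,Y))$ with $A,B\colon\fs{F}\times\fs{F}\to K$ continuous, the relation $Q|_{K\times K}=\id$ forces $A(a,\zero{\fs{F}})=a$ for $a\in K$. Then $Q_{11}\coloneqq A(\,\cdot\,,\zero{\fs{F}})\colon\fs{F}\to K$ takes values in $K$ and restricts to the identity on $K$, so $Q_{11}^2=Q_{11}$ and $Q_{11}$ is a continuous projection of $\fs{F}$ onto $K$, proving that $K$ splits. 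Together with the condition-(i) correspondence this yields $\va\pitchfork W$. I expect the reverse splitting step — deducing that $K$ splits in $\fs{F}$ from the mere hypothesis that $K\times K$ splits in $\fs{F}\times\fs{F}$ — to be the main obstacle, which the block-projection argument settles; the secondary delicate point is guaranteeing that all complements produced are genuinely topological, which is exactly where the open mapping theorem for \fr spaces is invoked.
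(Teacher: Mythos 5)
Your proof is correct. Note that the paper does not actually prove this lemma: it only remarks that the argument "can be readily adapted from the case of Banach manifolds (see \cite{mon1})," so your proposal supplies exactly the details the paper leaves to the reader, and it follows the same standard route that adaptation would take — submanifold charts reducing $\va \pitchfork W$ to surjectivity of $\dd\psi(x)=\pi_2\circ\dd\va_x$ together with splitting of $K=\ker\dd\psi(x)$, and the coordinate formula \eqref{eq:2} identifying the two transversality conditions for $\mathrm{T}\va$ at points over $x$. The two places where the Fr\'{e}chet setting genuinely requires care are handled correctly: the open mapping theorem (still valid for Fr\'{e}chet spaces, since $C$, $C\times C$, $\fs{G}_2$ are closed subspaces and hence Fr\'{e}chet) makes your complements topological in the forward direction, and the block-projection argument in the converse — extracting the continuous idempotent $Q_{11}=A(\,\cdot\,,\zero{\fs{F}})$ onto $K$ from a continuous projection onto $K\times K$ — is the one non-obvious step, and it is sound.
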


\begin{theorem}[Theorem 2.2, Transversality Theorem,\cite{k3}]\label{th:t1}
	Let $ \varphi \colon \fs{M} \to \fs{N} $ be an 
	$\mc{r}$-mapping with $r\geq1$, and let $ S \subset N $ be an $\mc{r}$-submanifold such that $ \varphi \pitchfork S $. Then, $ \varphi^{-1}(S) $ is either empty or an $\mc{r}$-submanifold of $ \fs{M} $
	with
	\[
	(T_x \varphi)^{-1}(T_yS) = T_x (\varphi^{-1}(S)), \quad x \in \varphi^{-1}(S),\, y=\varphi(x).
	\]
	If $ S $ has finite co-dimension in $ \fs{N} $, then $\codim (\varphi^{-1}(S)) = \codim S  $. Moreover, 
	if $\dim S = m <\infty$  and $\varphi$ is an $\mc{r}$-\li-Fredholm mapping of index $ l $, 
	then $\dim \varphi^{-1}(S) = l+m$. 
\end{theorem}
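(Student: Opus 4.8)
The plan is to reduce the statement to a local split-submersion problem and then invoke the implicit function theorem available in the \(\mc{k}\)-\fr category. Since being a submanifold is a local property, I fix \(x \in \varphi^{-1}(S)\) and set \(y = \varphi(x)\). Because \(S\) is an \(\mc{r}\)-submanifold, there is a submanifold chart \((V,\psi)\) about \(y\) with \(\fs{N}\) modeled on a split sum \(\fs{N}_1 \oplus \fs{N}_2\) such that \(\psi(V \cap S) = \psi(V) \cap (\fs{N}_1 \times \{\zero{\fs{N}_2}\})\). Writing \(\pi_2\) for the continuous linear projection onto \(\fs{N}_2\), I define the \(\mc{r}\)-map \(f \coloneqq \pi_2 \circ \psi \circ \varphi\) on a neighborhood \(U\) of \(x\). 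By construction \(\varphi^{-1}(S) \cap U = f^{-1}(\zero{\fs{N}_2})\), so it suffices to show that \(\zero{\fs{N}_2}\) is a regular value of \(f\) in the split-submersion sense.

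The next step is to translate the two transversality conditions into a submersion statement for \(f\). Since \(\dd f(x) = \pi_2 \circ \dd\psi(y) \circ \mathrm{T}_x\varphi\) and \(\dd\psi(y)\) is a linear isomorphism carrying \(\mathrm{T}_y S\) onto \(\fs{N}_1 \times \{\zero{\fs{N}_2}\} = \ker \pi_2\), condition (1) — namely \((\mathrm{T}_x\varphi)(\mathrm{T}_x\fs{M}) + \mathrm{T}_y S = \mathrm{T}_y\fs{N}\) — is exactly surjectivity of \(\dd f(x)\) onto \(\fs{N}_2\), while condition (2) — that \((\mathrm{T}_x\varphi)^{-1}(\mathrm{T}_y S)\) splits — is exactly the requirement that \(\ker \dd f(x)\) splits in \(\mathrm{T}_x\fs{M}\). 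Moreover \(\ker \dd f(x) = (\mathrm{T}_x\varphi)^{-1}(\mathrm{T}_y S)\), which already pins down the candidate tangent space.

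With \(f\) a split submersion at \(x\), I apply the local submersion (implicit function) theorem valid for \(\mc{k}\)-\fr manifolds to produce a chart in which \(f\) becomes the linear projection onto \(\fs{N}_2\); its zero fiber is then a chart domain for an \(\mc{r}\)-submanifold structure on \(\varphi^{-1}(S) \cap U\), with tangent space \(\mathrm{T}_x(\varphi^{-1}(S)) = \ker \dd f(x) = (\mathrm{T}_x\varphi)^{-1}(\mathrm{T}_y S)\). \textbf{The main obstacle is precisely this step}: the classical implicit function theorem fails on general \fr spaces, so the argument depends essentially on the fact that the \(\mc{k}\)-category is engineered so that an inverse/implicit function theorem does hold there; I would invoke the corresponding result established for this category rather than reproving it, and gluing the local charts gives the global submanifold.

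Finally, the index statements are pure linear algebra on \(T \coloneqq \mathrm{T}_x\varphi\). If \(S\) has finite codimension, then \(\fs{N}_2\) is finite-dimensional of dimension \(\codim S\), and since \(\varphi^{-1}(S)\) is locally the zero fiber of the submersion \(f\) onto \(\fs{N}_2\), its codimension equals \(\dim \fs{N}_2 = \codim S\). For the Fredholm case, transversality gives \(\Img T + \mathrm{T}_y S = \mathrm{T}_y\fs{N}\), so \(\Coker T \cong \mathrm{T}_y S /(\Img T \cap \mathrm{T}_y S)\) and hence \(\dim(\Img T \cap \mathrm{T}_y S) = m - \dim \Coker T\). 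Combining this with the exact sequence \(0 \to \ker T \to (\mathrm{T}_x\varphi)^{-1}(\mathrm{T}_y S) \to \Img T \cap \mathrm{T}_y S \to 0\) yields
\[
\dim (\mathrm{T}_x\varphi)^{-1}(\mathrm{T}_y S) = \dim\ker T + m - \dim\Coker T = m + \Ind T = m + l,
\]
which is the claimed dimension \(\dim \varphi^{-1}(S) = l + m\).
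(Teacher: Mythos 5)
The paper itself contains no proof of this statement: it is imported verbatim, with attribution, as Theorem~2.2 of \cite{k3}, so there is no internal argument to compare yours against. Judged on its own, your proposal is the standard preimage-theorem argument, and it is essentially the route by which such results are established in the cited source: localize with a submanifold chart $(V,\psi)$, convert transversality into the statement that $f=\pi_2\circ\psi\circ\varphi$ is a split submersion at $x$, apply the inverse/implicit function theorem available in the $\mc{k}$-category, and finish with linear algebra. Your translation of the two conditions is correct --- condition (1) is exactly surjectivity of $\dd f(x)$ onto $\fs{N}_2$, and $\ker \dd f(x)=(\mathrm{T}_x\varphi)^{-1}(\mathrm{T}_yS)$, so condition (2) is exactly splitting of the kernel --- and both dimension counts are sound, including the Fredholm computation via $\Coker T\cong \mathrm{T}_yS/(\Img T\cap \mathrm{T}_yS)$ and the exact sequence $0\to\ker T\to T^{-1}(\mathrm{T}_yS)\to \Img T\cap \mathrm{T}_yS\to 0$.

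One caveat deserves more weight than you give it, beyond flagging the implicit function theorem as the crux. In the $\mc{k}$-category the inverse function theorem is not triggered by algebraic surjectivity plus a merely topological splitting: the relevant linear isomorphisms must be invertible \emph{within} $\mathsf{L}(\cdot,\cdot)$, i.e., globally Lipschitz with globally Lipschitz inverse relative to the fixed translation-invariant metrics, and the projection realizing the splitting of $\ker\dd f(x)$ must likewise belong to $\mathsf{L}$. In a general \fr space the closed graph theorem gives continuity of such projections but not membership in $\mathsf{L}$, so this is a genuine hypothesis, not a formality. Accordingly, your reading of condition (2) must be ``splits in the category'' (an $\mathsf{L}$-complemented kernel), which is how transversality is meant in \cite{k3} and in the bounded-geometry framework of \cite{k4a}; with that understanding, your argument goes through, and gluing is automatic since being a submanifold is a local property.
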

Let 
$\va\colon \fs{M} \to \fs{N}$ be an $\mc{3}$-mapping between $\mc{4}$-\fr manifolds $\fs{M}$ and $\fs{N}$, and let $W \subset \fs{N}$ be an $\mc{3}$-submanifold of $\fs{N}$ such that \(\va \pitchfork W\). 
Then, by the transversality theorem, \(S=\va^{-1}(W)\)
is an $\mc{3}$-submanifold of $\fs{M}$, and 
\(
\mathrm{T}S = (\mathrm{T}\va)^{-1}(\mathrm{T}W)
\).
Since Lemma \ref{lem:ld} implies \( \mathrm{T}\va \pitchfork \mathrm{T}W \), applying the transversality theorem again yields
\[
\mathrm{T}(\mathrm{T}S) = (\mathrm{T}(\mathrm{T}\va))^{-1}(\mathrm{T}(\mathrm{T}W)).
\]
Consequently, for a given spray \( \s \) on \( \fs{M} \), Equation \eqref{eq:lm} implies
\[
A_{\s, S} = (\mathrm{T}(\mathrm{T}\va) \circ \s)^{-1}(\mathrm{T}(\mathrm{T}W)).
\]
Suppose $ \fs{F}_1 $ is a closed subset of the \fr space $ \fs{F} $ that splits it. Let $ \fs{F}_2 $
be one of its complements, i.e., $ \fs{F} = \fs{F}_1 \oplus \fs{F}_2 $. Let $ {S} $ be an \(\mc{k}\)-submanifold modeled on $ \fs{F}_1 $.
\begin{theorem}\label{th:tf}
	Let \( \fs{M} \) be a nuclear \( \mc{k} \)-Fréchet manifold, and let \( S \) be the submanifold of \( \fs{M} \) introduced above. 
	If \( S \) is a closed \(\mc{3}\)-submanifold of \( \fs{M} \) such that \( \s \big|_{\mathrm{T}S} \pitchfork \mathrm{T}(\mathrm{T}S) \), 
	then \( S \) is spray-invariant with respect to \( \s \) if and only if 
	\begin{equation}\label{eq:lss}
		\forall v \in \s(\mathrm{T}(\mathrm{T}S)), \quad \dd \s(v)(\s(v)) \in \mathrm{T}_{\s(v)}(\mathrm{T}(\mathrm{T}S)).
	\end{equation}
\end{theorem}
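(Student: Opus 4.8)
The plan is to chain together the two structural results already in hand: the Nagumo--Brezis characterization of spray-invariance on nuclear $\mc{k}$-Fréchet manifolds (Theorem~\ref{th:imp}) and the transversality theorem (Theorem~\ref{th:t1}), which computes tangent spaces of preimages. The organizing identity is \eqref{eq:lm}, namely $A_{\s,S} = \s^{-1}(\mathrm{T}(\mathrm{T}S))$. First I would record that $A_{\s,S} \subseteq \mathrm{T}S$: for $v \in A_{\s,S}$ the first component of $\s(v)$ equals $v$ (a spray is a second-order vector field), so by Definition~\ref{def:sec} the tangent vector associated with $\s(v)$ is $v \in \mathrm{T}_{\tau(v)}S$, whence Lemma~\ref{lem:l3} gives $\s(v) \in \mathrm{T}_v(\mathrm{T}S)$. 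Consequently $A_{\s,S} = (\s|_{\mathrm{T}S})^{-1}(\mathrm{T}(\mathrm{T}S))$, which is the form to which transversality applies.

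Second, I would promote $A_{\s,S}$ to a closed submanifold. Closedness is immediate: $S$ closed forces $\mathrm{T}(\mathrm{T}S)$ closed in $\TTM{M}$, and $\s$ is continuous, so $A_{\s,S} = \s^{-1}(\mathrm{T}(\mathrm{T}S))$ is closed. The submanifold structure is exactly the content of the hypothesis $\s|_{\mathrm{T}S} \pitchfork \mathrm{T}(\mathrm{T}S)$: Theorem~\ref{th:t1} then yields that $A_{\s,S}$ is a submanifold of $\mathrm{T}S$ with
\[
\mathrm{T}_v A_{\s,S} = \big(\mathrm{T}_v(\s|_{\mathrm{T}S})\big)^{-1}\big(\mathrm{T}_{\s(v)}(\mathrm{T}(\mathrm{T}S))\big), \qquad v \in A_{\s,S}.
\]

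Third, I would invoke Theorem~\ref{th:imp}. Since $\fs{M}$ is nuclear $\mc{k}$-Fréchet and $A_{\s,S}$ is nonempty and closed, $S$ is spray-invariant if and only if $\s$, regarded as a vector field on $\TM{M}$, is adjacent tangent to $A_{\s,S}$. As $A_{\s,S}$ is now a submanifold, the fact (established in the discussion following Lemma~\ref{lem:2}) that adjacent tangent vectors to a submanifold coincide with its genuine tangent vectors converts this into the condition $\s(v) \in \mathrm{T}_v A_{\s,S}$ for all $v \in A_{\s,S}$. Feeding in the tangent-space formula above, and using that $\s(v) \in \mathrm{T}_v(\mathrm{T}S)$ so that $\mathrm{T}_v(\s|_{\mathrm{T}S})(\s(v)) = \dd\s(v)(\s(v))$ under the identification $\mathrm{T}\s = \dd\s$, I obtain
\[
\s(v) \in \mathrm{T}_v A_{\s,S} \iff \dd\s(v)(\s(v)) \in \mathrm{T}_{\s(v)}(\mathrm{T}(\mathrm{T}S)),
\]
which is precisely \eqref{eq:lss} as $v$ ranges over $A_{\s,S} = \s^{-1}(\mathrm{T}(\mathrm{T}S))$. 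Chaining the three equivalences proves the statement.

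The main obstacle I anticipate is the double-tangent-bundle bookkeeping in the third step: one must be certain that $\s(v)$ lands in $\mathrm{T}_v(\mathrm{T}S)$ and not merely in $\mathrm{T}_v(\TM{M})$, so that restricting $\s$ to $\mathrm{T}S$ loses nothing and the derivative occurring in \eqref{eq:lss} is the one supplied by Theorem~\ref{th:t1}; this rests on Lemma~\ref{lem:l3} together with the two spray projections $\tau_* \circ \s = \Id$ and $\tau_2 \circ \s = \Id$. A secondary check is that the regularity is adequate---$\fs{M}$ of class $\mc{4}$, $S$ of class $\mc{3}$, and $\s$ at least $\mc{2}$---so that $\s|_{\mathrm{T}S}$ is an $\mc{r}$-map with $r \geq 1$ and Theorem~\ref{th:t1} legitimately produces both the submanifold $A_{\s,S}$ and its tangent-space identity.
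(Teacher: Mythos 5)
Your overall strategy is the same as the paper's: both arguments rest on the identity $A_{\s,S}=\s^{-1}(\mathrm{T}(\mathrm{T}S))$ from \eqref{eq:lm}, use the transversality theorem to give $A_{\s,S}$ a submanifold structure with tangent spaces $\dd\s(v)^{-1}\big(\mathrm{T}_{\s(v)}(\mathrm{T}(\mathrm{T}S))\big)$, and then invoke Theorem~\ref{th:imp} to turn spray-invariance into the tangency condition $\s(v)\in\mathrm{T}_v A_{\s,S}$, which is \eqref{eq:lss}. Your treatment of the last step (adjacent tangency equals genuine tangency on a submanifold) is, if anything, more explicit than the paper's.

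However, there is a genuine gap in your second step. Theorem~\ref{th:t1} requires a map $\varphi\colon \fs{M}\to\fs{N}$ and a submanifold of the codomain $\fs{N}$; you apply it to $\s|_{\mathrm{T}S}$ and $\mathrm{T}(\mathrm{T}S)$ without ever identifying $\fs{N}$. The only obvious candidate, $\fs{N}=\TTM{M}$, makes the transversality hypothesis unsatisfiable whenever $S$ is a proper split submanifold: in a submanifold chart, $\s$ sends $(x,y)\in(\phi(U)\cap\fs{F}_1)\times\fs{F}_1$ to $(x,y,y,\s_{\va,2}(x,y))$, so $\dd(\s|_{\mathrm{T}S})(v)$ has image inside $\fs{F}_1\times\fs{F}_1\times\fs{F}_1\times\fs{F}$ (the first three slots are forced into $\fs{F}_1$ because $\s$ is a second-order field restricted to $\mathrm{T}S$), while $\mathrm{T}_{\s(v)}(\mathrm{T}(\mathrm{T}S))$ sits inside $\fs{F}_1^{\,4}$; their sum can therefore never equal $\mathrm{T}_{\s(v)}(\TTM{M})\cong\fs{F}^{4}$ when $\fs{F}_1\neq\fs{F}$. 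Read this way, the hypothesis is void and the theorem you would prove is empty of content. This is exactly why the paper's proof begins by constructing the intermediate manifold $\bl{T}(\mathrm{T}S)\subset\TTM{M}$, modeled locally on $\fs{F}_1\times\fs{F}_1\times\fs{F}_1\times\fs{F}$, proving that $\s$ maps $\mathrm{T}S$ into $\bl{T}(\mathrm{T}S)$, and interpreting the hypothesis $\s|_{\mathrm{T}S}\pitchfork\mathrm{T}(\mathrm{T}S)$ as transversality \emph{inside} $\bl{T}(\mathrm{T}S)$:
\[
\dd\Big(\s\big|_{\mathrm{T}S}(v)\Big)(\mathrm{T}_v(\mathrm{T}S)) + \mathrm{T}_{\s(v)}(\mathrm{T}(\mathrm{T}S)) = \mathrm{T}_{\s(v)}\big(\bl{T}(\mathrm{T}S)\big), \quad v\in\s^{-1}(\mathrm{T}(\mathrm{T}S)).
\]
Without this construction (or an equivalent identification of the correct codomain), your appeal to Theorem~\ref{th:t1} is not justified, so the submanifold structure and tangent-space formula for $A_{\s,S}$ --- on which everything afterwards rests --- are not actually obtained. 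Once this ambient manifold is supplied, the remainder of your argument goes through and matches the paper's.
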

\begin{proof}
	Define \( \bl{T}(\mathrm{T}S) \) as the set of elements \( w \in \TTM{M}) \) such that \( \tau_2(w) \in \mathrm{T}S \), and there exists a chart \( \phi \colon U \to \fs{F} \) at \( \tau(\tau_2(w)) \) satisfying the following conditions:
	\begin{itemize}
		\item  \( \phi(U \cap S) = \phi(U) \cap \fs{F}_1 \),
		\item  \( \dd(\dd\phi)(\tau_2(w)) (w) \in \fs{F}_1 \times \fs{F} \).
	\end{itemize}
	This definition is independent of the choice of chart. The definition directly implies
	\[
	\dd(\dd \phi)\big(\mathrm{T}(\mathrm{T}U)\big) \cap \bl{T}(\mathrm{T}S))\big) = (\phi(U) \cap \fs{F}_1 ) \times \fs{F}_1  \times \fs{F}_1  \times \fs{F}.
	\]  
	This implies that \( \bl{T}(\mathrm{T}S) \) is a submanifold of \(\TM(\mathrm{T}\fs{M})\) modeled on \( \fs{F}_1  \times \fs{F}_1  \times \fs{F}_1  \times \fs{F}\). Moreover,  since \(\s\) on \(\fs{M}\) maps \(\mathrm{T}S \) into \( \bl{T}(\mathrm{T}S) \), and
	\[
	\dd(\dd\phi) \circ (\s \big|_{\mathrm{T}S}) \circ (\dd \va)^{-1}\big(\phi(U) \cap \fs{F}_1 \big) \times \fs{F}_1 \subset \fs{F}_1  \times \fs{F}_1  \times \fs{F}_1  \times \fs{F}.
	\] 
	we find that the image of \(\s \big|_{\mathrm{T}S} \) lies in \( \bl{T}(\mathrm{T}S)\). Now, the transversality assumption implies
	\[
	\dd \Big(\s \big|_{\mathrm{T}S}(v)\Big) (\TS{v} (\mathrm{T}S )) + \mathrm{T}_{\s(v)}  (\mathrm{T}(\mathrm{T}S)) = \mathrm{T}_{\s(v)} (\bl{T}(\mathrm{T}S )), \quad \text{ for } v \in \s^{-1}(\mathrm{T}(\mathrm{T}S )).
	\]
	
	Therefore, by Equation \eqref{eq:lm} and Theorem~\ref{th:imp}, \(A_{\s, S} = \s^{-1}(\mathrm{T}(\mathrm{T}S))\)
	is an \(\mc{1}\)-\fr submanifold of \(\mathrm{T}S \), and  its tangent space at \(v \in A_{\s, S}  \) is given by
	\[
	\TS{v} (A_{\s, S} ) = \dd \s(v)^{-1}\Big(\mathrm{T}_{\s(v)} (\mathrm{T}(\mathrm{T}S ))\Big).
	\]
	Consequently, by Theorem \ref{th:imp}, \(S\) is spray-invariant with respect to \(\s\) if and only if
	\[
	\forall v \in  A_{\s, S} ,\quad \s(v) \in \TS{v} (A_{\s, S} )
	\]
	which is equivalent to the condition stated in \eqref{eq:lss}.\label{sec:bh}
\end{proof}
\begin{Rem}
	In Theorem \ref{th:tf}, explicitly verifying the transversality condition  can be very difficult. The infinite-dimensional nature of \( \mathrm{T}(\mathrm{T}S) \), together with the complexity of identifying suitable complements in the modeling space, poses significant analytical challenges even in relatively simple settings.
\end{Rem}
\section{Aspects of Banach and Hilbert Manifolds}\label{sec:hb}
In contrast to  \fr manifolds, for Banach manifolds there is a well-developed framework for the existence, uniqueness, and regularity of ordinary differential equations. This allows for the application of tools such as geodesic flows to characterize invariance.

We use the same notations as before. Regarding differentiability, Definition  \ref{def:diff} applies to Banach spaces as well; however, Banach spaces admit an equivalent formulation (see \cite{lang}).

In Section \ref{sec:2}, Definitions \ref{def:admiset} and \ref{def:fis}, along with Theorems \ref{th:1}, \ref{th:sub}, \ref{th:decom},  and \ref{th:aut}, and their consequences, remain valid for Banach manifolds as well. This follows from the fact that all prerequisite results hold in the Banach setting. In particular, relevant properties of sprays are discussed in \cite{lang}, while adjacent cones are treated in \cite{mon}.

In Section \ref{sec:mc}, an analogous  of Theorem \ref{th:tf} holds for arbitrary Banach manifolds, since the transversality theorem is available in this context. However, as previously observed, verifying the transversality condition remains challenging even for Banach and Hilbert manifolds. 

Theorem \ref{th:imp} relies on the Nagumo-Brezis Theorem for nuclear manifolds. However, no infinite-dimensional Banach manifold is nuclear. Nevertheless, a variant of the Nagumo-Brezis Theorem is available for arbitrary Banach manifolds of class \( C^k \), with \( k \geq 2 \); see \cite{pa}. Thus, Theorem \ref{th:imp} holds for arbitrary Banach manifolds of class at least \( C^4 \).

\begin{theorem}\label{th:impb}
	Let \( \fs{B} \) be a  \( C^k \)-Banach manifold, \(k\geq 4\), and \( S \subset \fs{B} \) a subset such that \( A_{\s, S} \) is non-empty and closed. Then,  \( S \) is spray-invariant if and only if
	\( \s \) is adjacent tangent to \( A_{\s, S} \) when regarded as a vector field on \( \TM{B} \).
\end{theorem}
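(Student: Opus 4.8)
The plan is to transcribe the proof of Theorem~\ref{th:imp} into the Banach category, replacing the nuclear-Fr\'{e}chet Nagumo--Brezis Theorem~\ref{th:nb} by its Banach-manifold counterpart recorded in \cite{pa}, which is valid on any \(C^k\)-Banach manifold with \(k \geq 2\). The conceptual content is unchanged: the second-order property ``\(S\) is spray-invariant'' on \(\fs{B}\) is recast as the first-order property ``\(A_{\s, S}\) is flow-invariant under \(\s\)'' on the tangent bundle \(\TM{B}\), and the only genuinely new input is that the Banach version of Nagumo--Brezis applies here, whereas the nuclear hypothesis used in Section~\ref{sec:mc} is unavailable since no infinite-dimensional Banach space is nuclear.

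First I would dispatch the regularity bookkeeping. As \(\fs{B}\) is \(C^k\) with \(k \geq 4\), the bundle \(\TM{B}\) is a \(C^{k-1}\)-Banach manifold modeled on \(\fs{F} \times \fs{F}\), and the spray \(\s \colon \TM{B} \to \TTM{B}\) is of class \(C^{k-2} \geq C^2\). Regarded as a first-order vector field on \(\TM{B}\), \(\s\) thus carries enough regularity for the \cite{pa} version of Nagumo--Brezis, and by the defining relation \(g'' = \s(g')\) in \eqref{eq:gd} its integral curves are precisely the canonical lifts \(g'\) of geodesics \(g\). The bridge is then Theorem~\ref{th:1}: since \(S\) is closed, for every geodesic \(g\) we have \(g(t) \in S\) for all \(t\) if and only if \(g'(t) \in A_{\s, S}\) for all \(t\). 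Reading Definition~\ref{def:fis} through this equivalence shows that \(S\) is spray-invariant exactly when \(A_{\s, S} \subset \TM{B}\) is flow-invariant under \(\s\) in the sense of Definition~\ref{def:fis2}. (The inclusion \(g'(t) \in A_{\s, S} \Rightarrow g(t) = \tau(g'(t)) \in S\) is immediate from Definition~\ref{def:admiset}; the reverse passage consumes the closedness hypothesis via Theorem~\ref{th:1}.)

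With the reduction in place, I would invoke the Banach Nagumo--Brezis theorem on the manifold \(\TM{B}\) with closed subset \(A_{\s, S}\) and vector field \(\s\). For \((1)\Rightarrow(2)\), spray-invariance of \(S\) yields flow-invariance of \(A_{\s, S}\), and Nagumo--Brezis converts this into the adjacent-tangency condition \(\s(v) \in \TS{v}(A_{\s, S})\) for every \(v \in A_{\s, S}\) (the adjacent cone of Definition~\ref{def:sec2}, computed in \(\TM{B}\)); this is exactly the assertion that \(\s\) is adjacent tangent to \(A_{\s, S}\). For \((2)\Rightarrow(1)\), the converse direction of Nagumo--Brezis turns adjacent tangency into flow-invariance of \(A_{\s, S}\), which by the bridge is spray-invariance of \(S\). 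Chart-independence of the adjacent-tangency condition, needed so that the criterion is well posed, is supplied by Lemma~\ref{lem:2}.

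The main obstacle is not analytic but one of matching hypotheses: one must confirm that the \cite{pa} formulation of Nagumo--Brezis is stated for, or readily adapted to, a \(C^{k-2}\)-vector field on the \(C^{k-1}\)-manifold \(\TM{B}\) with a merely closed invariant set, and that its tangency condition coincides verbatim with the adjacent tangency in the conclusion. The substantive step remains the reduction from second-order dynamics on \(\fs{B}\) to first-order dynamics on \(\TM{B}\); once Theorem~\ref{th:1} licenses this, the Banach theory of ordinary differential equations furnishes the integral-curve existence and uniqueness that the Fr\'{e}chet setting lacked, and the argument closes precisely as in Theorem~\ref{th:imp}.
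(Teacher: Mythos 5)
Your proposal is correct and takes essentially the same approach as the paper: the paper establishes Theorem~\ref{th:impb} precisely by re-running the proof of Theorem~\ref{th:imp} --- the Theorem~\ref{th:1} bridge between spray-invariance of \(S\) and flow-invariance of \(A_{\s, S}\) under \(\s\) on \(\TM{B}\), followed by Nagumo--Brezis applied to the closed set \(A_{\s, S}\) --- with the nuclear Fr\'{e}chet Theorem~\ref{th:nb} replaced by the Banach-manifold variant from \cite{pa}. Your additional regularity bookkeeping and the chart-independence remark via Lemma~\ref{lem:2} only make explicit what the paper leaves implicit.
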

\begin{Exmp}
	\label{ex:nonneg}
	Consider the Banach manifold \( \mathcal{M} = C^k(S^1, \mathbb{R}) \) equipped with the flat spray \( \s(f, v) = (f, v, v, 0) \), whose geodesics are affine paths \( \gamma(t) = f + tv \). 
	
	Let $N$ be a fixed non-negative integer. Define the set $S$ as follows
	\[
	S \coloneqq \left\{ f \in \mathcal{M} \mid f(\theta) = a_0 + \sum_{j=1}^{N} (a_j \cos(j\theta) + b_j \sin(j\theta)) \text{ for some } a_j, b_j \in \mathbb{R} \right\}.
	\]
	The set $S$ is a finite-dimensional linear subspace of $\mathcal{M}$ and therefore a smooth submanifold. Thus, the adjacent cone $\mathrm{T}_fS$ at a point $f \in S$ is the tangent space, i.e.,
	\(
	\mathrm{T}_fS = S
	\).
	
	The admissibility condition for a velocity $v$ at a point $f \in S$ is simply $v \in \mathrm{T}_fS = S$. Thus, the admissible set is
	\[
	A_{\s, S} = \{ (f, v) \in \mathrm{T}\mathcal{M} \mid f \in S, v \in S \} = S \times S.
	\]
	As a finite-dimensional subspace, $S$ is a closed subset of $\mathcal{M}$. Consequently, the product set $S \times S$ is a closed subset of the product space $\mathcal{M} \times \mathcal{M}$. Therefore, $A_{\s, S}$ is closed.
	
	The spray $\s$ assigns the vector $(v, 0)$ to the point $(f, v) \in \mathrm{T}\mathcal{M}$. We show that this vector is in the tangent cone $\mathrm{T}_{(f,v)}A_{\s, S}$. Since $A_{\s, S} = S \times S$ is a linear subspace, its tangent cone at any point is the space itself. We check if $(v, 0) \in S \times S$. This requires $v \in S$ and $0 \in S$. Both are true since $(f,v) \in A_{\s, S}$ and $S$ is a linear subspace containing the zero function. Thus, the spray is adjacent tangent to $A_{\s, S}$. Therefore by Theorem~\ref{th:impb} the subspace $S$ is spray-invariant.
\end{Exmp}
We assume that \( \fs{B} \) is a Banach manifold of class \(C^k\) with \(k\geq 4\),  and that \( \s \) is a spray on \( \fs{B}\) of class  \(C^2\). Recall that the geodesic flow is the  mapping  
\(
\Phi_t \colon \mathrm{T}\fs{B} \to \mathrm{T}\fs{B}
\) that satisfies
\(
\Phi_t(v)  = g_v'(t),
\)
where \( g_v\colon I \to \fs{B} \) is the unique geodesic with initial tangent \( v \in \mathrm{T}\fs{B} \).  

\begin{theorem}\label{th:geod}
	A closed subset \( S \subset \fs{B} \) is spray-invariant  if and only if its admissible set \( {A}_{\s, S} \) is invariant under the geodesic flow $\Phi_t$.
\end{theorem}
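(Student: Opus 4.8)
The plan is to derive the equivalence directly from Theorem~\ref{th:1}, which already supplies the crucial two-way bridge between a geodesic lying in $S$ and its canonical lift lying in $A_{\s, S}$. The one extra ingredient is the defining identity $\Phi_t(v) = g_v'(t)$: the orbit of the geodesic flow through $v$ is exactly the canonical lift $g_v'$ of the unique geodesic $g_v$ with initial tangent $v$. Since $\fs{B}$ is a Banach manifold, existence and uniqueness of $g_v$ for each $v \in \mathrm{T}\fs{B}$ are available, and I interpret both \emph{spray-invariance} and \emph{flow-invariance} with all time parameters ranging over the maximal interval of existence of the relevant geodesic.

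For the forward implication, I would assume $S$ is spray-invariant and fix $v \in A_{\s, S}$. By Definition~\ref{def:admiset}, $\tau(v) \in S$, so the unique geodesic $g_v$ with $g_v(0) = \tau(v)$ and $g_v'(0) = v$ satisfies $g_v(0) \in S$ and $g_v'(0) \in A_{\s, S}$. Spray-invariance then gives $g_v(t) \in S$ for all $t$ in the domain of $g_v$. Because $S$ is closed, Theorem~\ref{th:1} converts this into $g_v'(t) \in A_{\s, S}$ for all such $t$; since $g_v'(t) = \Phi_t(v)$, this is precisely the statement that $A_{\s, S}$ is invariant under $\Phi_t$.

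For the reverse implication, I would assume $A_{\s, S}$ is $\Phi_t$-invariant and take any geodesic $g$ with $0$ in its domain, $g(0) \in S$, and $g'(0) \in A_{\s, S}$. Setting $v = g'(0)$, uniqueness of geodesics yields $g = g_v$, hence $g'(t) = \Phi_t(v)$. Invariance of $A_{\s, S}$ forces $g'(t) = \Phi_t(v) \in A_{\s, S}$ for all $t$ in the domain, and Theorem~\ref{th:1} (again using that $S$ is closed) converts this back into $g(t) \in S$ for all such $t$. By Definition~\ref{def:fis}, this is exactly the spray-invariance of $S$.

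The argument is thus essentially mechanical once Theorem~\ref{th:1} is in hand, and there is no serious analytic obstacle. The only point demanding minor care is the bookkeeping of domains: the geodesic flow need not be complete, so each quantified ``for all $t$'' must be read over the maximal interval of existence of the geodesic through $v$; and one must note that the condition $\tau(v) \in S$ built into the definition of $A_{\s, S}$ is precisely what licenses invoking the spray-invariance hypothesis in the forward direction.
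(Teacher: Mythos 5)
Your proposal is correct and follows essentially the same route as the paper's own proof: both directions reduce to Theorem~\ref{th:1} via the identity $\Phi_t(v) = g_v'(t)$, with closedness of $S$ licensing the two-way conversion between $g(t) \in S$ and $g'(t) \in A_{\s, S}$. If anything, your write-up is slightly more careful than the paper's (you correctly attribute the reverse direction to flow-invariance of $A_{\s,S}$, where the paper's text misstates it as ``spray invariance,'' and you make the domain bookkeeping explicit).
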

\begin{proof}
	Assume \( S \) is spray-invariant. Let \( v \in \Adm{\s}{S} \). By definition of the admissible set, the geodesic \( \gamma_v(t) = \tau(\GeoFlow{t}(v)) \) satisfies \( \gamma_v(t) \in S \) for all \( t \) in its maximal interval \( I \). By Theorem \ref{th:1}, the tangent field \( \gamma_v'(t) = \GeoFlow{t}(v) \) remains in \( \Adm{\s}{S} \). Thus, \( \GeoFlow{t}(v) \in \Adm{\s}{S} \) for all \( t \in I \), proving \( \Adm{\s}{S} \) is \( \GeoFlow{t} \)-invariant.
	
	Conversely, assume \( \Adm{\s}{S} \) is \( \GeoFlow{t} \)-invariant. Let \( \gamma \colon I \to \fs{B} \) be a geodesic with \( \gamma(0) \in S \) and \( \gamma'(0) \in \Adm{\s}{S} \). By spray invariance we have
	\[
	\forall t \in I, \quad \gamma'(t) = \GeoFlow{t}(\gamma'(0)) \in \Adm{\s}{S}.
	\]
	Then Theorem \ref{th:1} implies \( \gamma(t) = \tau(\gamma'(t)) \in S \) for all \( t \in I \). Hence, \( S \) is spray-invariant.
\end{proof}

The spray \(\s\) induces a unique torsion-free covariant
derivative \(\nabla^{\fs{B}}\) (VIII, \S 2, Theorem 2.1,\cite{lang}). Let \( g\colon I \to \fs{B} \) be a \( C^2 \)-curve. We say that a lift \( \gamma\colon I \to \mathrm{T}\fs{B} \) of \( g\) is \( g\)-parallel if  \(\nabla^{\fs{B}}_{g'}\gamma =0\). A curve $g$ is a geodesic for the spray if and only if \(\nabla^{\fs{B}}_{g'}g' =0\), that is, if
and only if \(g'\) is \(g\)-parallel.

Manifolds modeled on self-dual Banach spaces, including Hilbert spaces, admit canonical sprays induced by pseudo-Riemannian metrics (VIII, \S 7, Theorem 7.1, \cite{lang}). This theorem also holds for Hilbert Riemannian manifolds, as the proof does not rely on the indefiniteness of the pseudo-Riemannian metric. Instead, it depends only on the metric being smooth and non-degenerate, properties that Riemannian metrics also possess. 

Consider canonical sprays on Hilbert Riemannian manifolds.
Suppose that \(\fs{H}\) is a Hilbert  Riemannian manifold and that \( S \subset \fs{H} \) is a 
\(C^1\)-submanifold with the induced metric (or Levi-Civita) covariant derivative  \( \nabla^{{S}} \) defined by canonical spray \(\s\). There exists a canonical symmetric bilinear bundle map, known as the \emph{second fundamental form} (see~\cite[IX, \S 1, Propositions 1.2 and 1.3]{lang}). This map is given by the Gauss formula as follows
\[
\nabla^{\fs{H}}_X Y_x(x) = \nabla^S_X Y(x) + \mathrm{II}(X(x), Y(x)),
\]
for any \(x \in S\) vector fields \( X, Y \) of \( S \) near \(s\), and the extension \(Y_x\) of \(Y\) near \(x\).

Suppose that \( S \subset \fs{H} \) is spray-invariant, and Let \( \gamma \colon I \to \fs{H} \) be a geodesic with \( \gamma(0) \in S \) and \( \gamma'(0) \in \Adm{\s}{S} \). Then
\[
0=\nabla_{{\gamma'}(t)} {\gamma'}(t)  \text{ in } T_{\gamma(t)}{S} \quad \forall t \in I.
\]
By the Gauss formula
\[
\nabla^{\fs{H}}_{{\gamma}'} {\gamma'} = \nabla^{\mathcal{S}}_{{\gamma}'} {\gamma'} + \mathrm{II}({\gamma'}, {\gamma}'),
\]
since the total derivative is tangent to \( \mathcal{S} \), its normal component must vanish, i.e.,
\(
\mathrm{II}({\gamma'(t)}, {\gamma'(t)}) = 0
\)
for all \(t \in I\).
A polarization identity is given by
\[
\mathrm{II}(X, Y) = \frac{1}{2} \left( \mathrm{II}(X+Y, X+Y) - \mathrm{II}(X, X) - \mathrm{II}(Y, Y) \right).
\]
If this identity could be applied for arbitrary \( X, Y \in \mathrm{T}S \), then the vanishing of \( \mathrm{II}(X,X) \) would imply the vanishing of \( \mathrm{II}(X,Y) \).
However, spray-invariance only gives us the condition \( \mathrm{II}(Z, Z) = 0 \) for vectors \( Z \) in \(A_{\s,S}\). It does not guarantee that \( X + Y \) is also such a tangent vector, and hence we cannot conclude that \( \mathrm{II}(X+Y, X+Y) = 0 \) unless 
\(A_{\s,S}=\mathrm{T}S\).

\begin{Exmp}\label{ex:hi}
	Consider the Hilbert manifold \( \mathcal{M} = H^1(S^1, S^2) \), the space of maps from the circle \( S^1 \) into the 2-sphere \( S^2 \) whose first derivatives are square-integrable. By the Sobolev embedding theorem \textup{(}$1 - \dfrac{1}{2}> 0$\textup{)}, every map in this space is continuous.

	Let \( \s_{S^2} \) be the canonical geodesic spray on the finite-dimensional manifold $S^2$. 
	We define a spray \( \s \) on the loop space \( \mathcal{M} \)  by applying the target spray pointwise. For any $(f, v) \in \mathrm{T}\mathcal{M}$, the spray $\s(f,v)$ is the second-order vector field along $f$ given by
	\[
	\s(f, v)(\theta) \coloneqq \s_{S^2}(f(\theta), v(\theta)).
	\]
	The geodesics of \( \s \) are defined as the paths \( \gamma(t) \) in \( \mathcal{M} \) that satisfy the pointwise geodesic equation for $S^2$:
	\[
	\nabla_{\gamma'(\theta)}^{S^2} \gamma'(\theta) = 0 \quad \text{for each } \theta \in S^1.
	\]
	Since unique solutions for this ODE exist on the compact manifold $S^2$ for any initial condition, this spray is well-defined. Its geodesics are, by construction, precisely the pointwise geodesics of the target manifold $S^2$.
	
	Let \( C \subset S^2 \) be a great circle, which is a totally geodesic submanifold. Define the subset of constant loops on this circle:
	\[
	S \coloneqq \left\{ f \in \mathcal{M} \,\middle|\, \exists p \in C \text{ such that } f(\theta) = p \text{ for all } \theta \in S^1 \right\}.
	\]
	This set $S$ is a closed $C^\infty$-submanifold of $\mathcal{M}$. The tangent space \( \mathrm{T}_f S \) at a point \( f(\theta) = p \in S \) consists of constant vector fields \( v(\theta) = v_0 \) where \( v_0 \in \mathrm{T}_p C \).

	We first determine the admissible set $A_{\s, S}$. A vector $v = (f, u) \in \mathrm{T}S$ is in $A_{\s, S}$ if the acceleration of its geodesic, $\gamma_v''(0)$, is tangent to $S$. For the spray $\s$, this acceleration is computed pointwise: 
	\[
		\gamma_v''(0)(\theta) = \nabla_{u(\theta)}^{S^2} u(\theta).
	\]
	Since $u(\theta) = u_0$ is a constant vector tangent to the great circle $C$, and $C$ is itself a geodesic on $S^2$, the self-covariant derivative $\nabla_{u_0}^{S^2} u_0$ is zero. The zero vector field is tangent to $S$. This condition holds for all vectors $v \in \mathrm{T}S$. Therefore, the admissible set is the entire tangent bundle of $S$:
	\[
	A_{\s, S} = \mathrm{T}S.
	\]
Therefore, by  Theorem~\ref{th:sub} for Hilbert manifolds, $S$ is a totally geodesic submanifold.
\end{Exmp}

\begin{Exmp}\label{ex:stra}
	Let \( H = \ell^2 \), the separable Hilbert space of square-summable sequences with standard inner product
	\[
	\langle x, y \rangle = \sum_{i=1}^\infty x_i y_i,
	\]
	and let \( \{e_n\}_{n \in \mathbb{N}} \) denote its standard orthonormal basis.
	Define the subset
	\[
	\mathcal{S} \coloneqq \left\{ x \in H \;\middle|\; \text{only finitely many coordinates of } x \text{ are nonzero} \right\}.
	\]
	This is the space of finite sequences, and can be expressed as a countable union\(\colon\)
	\[
	\mathcal{S} = \bigcup_{k=1}^{\infty} H_k, \quad \text{where } H_k \coloneqq  \operatorname{span}(e_1, \dots, e_k).
	\]
	Each \( H_k \) is a finite-dimensional linear subspace of \( H \). Consider the flat spray of \(\ell^2 \).
	Let \( x \in \mathcal{S} \) and \( v \in \mathrm{T}_x \mathcal{S} \). Then there exists \( k \) such that both \( x, v \in H_k \). The geodesic starting at \( x \) with tangent \( v \) is given by
	\(
	\gamma(t) = x + tv
	\). Since \( H_k \) is a linear subspace, \( \gamma(t) \in H_k \subset \mathcal{S} \) for all \( t \in \mathbb{R} \). Thus,
	\( \mathcal{S} \) is spray-invariant.
	The set \( \mathcal{S} \) is not a smooth submanifold of \( H \), since it is not locally homeomorphic to a Hilbert space. It is a stratified space, built from the smooth finite-dimensional submanifolds \( H_k \).  
	We consider a {stratification} of $\mathcal{S}$ into strata $S_i$:
	\[
	S_i = H_i \setminus H_{i-1}.
	\]
	Let \( S_i \) and \( S_j \) be  two strata. We consider the following cases$\colon$
	\begin{itemize}
		\item \textbf{Case 1: \( i < j \)} \\
		\( \overline{S_i} = H_i \). Since \( H_i \subset H_j \), but \( H_i \) contains vectors with at most \( i \) nonzero components, while \( S_j \) contains vectors with exactly \( j > i \) nonzero components, it follows that \( H_i \cap S_j = \emptyset \). Thus, \( \overline{S_i} \cap S_j = \emptyset \). 
		\item \textbf{Case 2: \( i = j \)} \\
		Trivially, \( \overline{S_i} = H_i \), and \( \overline{S_i} \cap S_i = S_i \neq \emptyset \). Furthermore, \( S_i \subset \overline{S_i} \) by definition.
		\item \textbf{Case 3: \( i > j \)} \\
		We have \( H_j \subset H_i \), and \( S_j = H_j \setminus H_{j-1} \subset H_i \). Hence, \( \overline{S_i} \cap S_j = S_j \neq \emptyset \), and \( S_j \subset \overline{S_i} \).
	\end{itemize}
	In all cases, the frontier condition is satisfied for the decomposition \( \mathcal{S} = \bigsqcup_{k=1}^\infty S_k \). Thus, this decomposition defines a stratification of \( \mathcal{S} \).
	
	Each \( H_k \) is totally geodesic in \( H \) due to the flatness of the ambient geometry. However, the union \( \mathcal{S} \) is not totally geodesic as a whole, since it lacks a global smooth structure: the second fundamental form is not defined across strata.
	
\end{Exmp}

\end{document}